
\documentclass[12pt,letterpaper,leqno]{amsart}
\usepackage{microtype}
\usepackage{amssymb}
\usepackage{amsfonts}
\usepackage{geometry}

\setcounter{MaxMatrixCols}{10}

\oddsidemargin0.25in
\evensidemargin0.25in
\textwidth6.00in
\topmargin0.00in
\textheight8.50in

\newcommand{\indentalign}{\hspace{0.3in}&\hspace{-0.3in}}

\newtheorem{theorem}{Theorem}
\theoremstyle{plain}

\newtheorem{conjecture}{Conjecture}
\newtheorem{corollary}{Corollary}

\newtheorem{lemma}{Lemma}

\numberwithin{equation}{section}
\numberwithin{theorem}{section}  
\numberwithin{proposition}{section}  
\numberwithin{lemma}{section}  
\numberwithin{corollary}{section}  
\input{tcilatex}
\geometry{left=1in,right=1in,top=1in,bottom=1in}

\begin{document}
\title[Proof of the Klainerman-Machedon conjecture with high $\beta $]{On
the Klainerman-Machedon conjecture for the quantum BBGKY hierarchy with
self-interaction}
\author{Xuwen Chen}
\address{Department of Mathematics, Brown University, 151 Thayer Street,
Providence, RI 02912}
\email{chenxuwen@math.brown.edu}
\urladdr{http://www.math.brown.edu/\symbol{126}chenxuwen/}
\author{Justin Holmer}
\address{Department of Mathematics, Brown University, 151 Thayer Street,
Providence, RI 02912}
\email{holmer@math.brown.edu}
\urladdr{http://www.math.brown.edu/\symbol{126}holmer/}
\date{V2 for JEMS, 02/04/2014}
\subjclass[2010]{Primary 35Q55, 35A02, 81V70; Secondary 35A23, 35B45.}
\keywords{BBGKY Hierarchy, $N$-particle Schr\"{o}dinger Equation,
Klainerman-Machedon Space-time Bound, Quantum Kac's Program}

\begin{abstract}
We consider the 3D quantum BBGKY hierarchy which corresponds to the $N$%
-particle Schr\"{o}dinger equation. We assume the pair interaction is $%
N^{3\beta -1}V(N^{\beta }\bullet )$. For interaction parameter $\beta \in (0,%
\frac{2}{3})$, we prove that, provided an energy bound holds for solutions
to the BBKGY hierarchy, the $N\rightarrow \infty $ limit points satisfy the
space-time bound conjectured by S. Klainerman and M. Machedon \cite%
{KlainermanAndMachedon} in 2008. The energy bound is proven to hold for $%
\beta \in (0,\frac{3}{5})$ in \cite{E-S-Y2}. This allows, in the case $\beta
\in (0,\frac{3}{5})$, for the application of the Klainerman-Machedon
uniqueness theorem and hence implies that the $N\rightarrow \infty $ limit
of BBGKY is uniquely determined as a tensor product of solutions to the
Gross-Pitaevskii equation when the $N$-body initial data is factorized. The
first result in this direction in 3D was obtained by T. Chen and N. Pavlovi%
\'{c} \cite{TChenAndNPSpace-Time} for $\beta \in (0,\frac{1}{4})$ and
subsequently by X. Chen \cite{Chen3DDerivation} for $\beta \in (0,\frac{2}{7}%
]$. We build upon the approach of X. Chen but apply frequency localized
Klainerman-Machedon collapsing estimates and the endpoint Strichartz
estimate in the estimate of the \textquotedblleft potential
part\textquotedblright\ to extend the range to $\beta \in (0,\frac{2}{3})$.
Overall, this provides an alternative approach to the mean-field program by
L. Erd\"{o}s, B. Schlein, and H.-T. Yau \cite{E-S-Y2}, whose uniqueness
proof is based upon Feynman diagram combinatorics.
\end{abstract}

\maketitle
\tableofcontents


\section{Introduction}

\label{S:Introduction}

The 3D quantum BBGKY (Bogoliubov-Born-Green-Kirkwood-Yvon) hierarchy is
generated from the $N$-body Hamiltonian evolution $\psi
_{N}(t)=e^{itH_{N}}\psi _{N,0}$ with symmetric initial datum and the $N$%
-body Hamiltonian is given by 
\begin{equation}
H_{N}=-\triangle _{\mathbf{x}_{N}}+\frac{1}{N}\sum_{1\leqslant i<j\leqslant
N}N^{3\beta }V(N^{\beta }\left( x_{i}-x_{j}\right) ).  \label{def:H_N}
\end{equation}%
In the above, $t\in \mathbb{R}$, $\mathbf{x}_{N}=(x_{1},x_{2},\cdots
,x_{N})\in \mathbb{R}^{3N}$, $\triangle _{\mathbf{x}_{N}}$ denotes the
standard Laplacian with respect to the variables $\mathbf{x}_{N}\in \mathbb{R%
}^{3N}$, the factor $1/N$ in (\ref{def:H_N}) is to make sure that the
interactions are proportional to the number of particles, and the pair
interaction $N^{3\beta }V(N^{\beta }\left( x_{i}-x_{j}\right) )$ is an
approximation to the Dirac $\delta $ function which matches the
Gross-Pitaevskii description of Bose-Einstein condensation that the
many-body effect should be modeled by a strong on-site self-interaction.
Since $\psi _{N}\overline{\psi _{N}}$ is a probability density, we define
the marginal densities $\left\{ \gamma _{N}^{(k)}(t,\mathbf{x}_{k};\mathbf{x}%
_{k}^{\prime })\right\} _{k=1}^{N}$ by 
\begin{equation*}
\gamma _{N}^{(k)}(t,\mathbf{x}_{k};\mathbf{x}_{k}^{\prime })=\int \psi
_{N}(t,\mathbf{x}_{k},\mathbf{x}_{N-k})\overline{\psi _{N}}(t,\mathbf{x}%
_{k}^{\prime },\mathbf{x}_{N-k})d\mathbf{x}_{N-k},\text{ }\mathbf{x}_{k},%
\mathbf{x}_{k}^{\prime }\in \mathbb{R}^{3k}.
\end{equation*}%
Then we have that $\left\{ \gamma _{N}^{(k)}(t,\mathbf{x}_{k};\mathbf{x}%
_{k}^{\prime })\right\} _{k=1}^{N}$ is a sequence of trace class operator
kernels which are symmetric, in the sense that 
\begin{equation*}
\gamma _{N}^{(k)}(t,\mathbf{x}_{k},\mathbf{x}_{k}^{\prime })=\overline{%
\gamma _{N}^{(k)}(t,\mathbf{x}_{k}^{\prime },\mathbf{x}_{k})},
\end{equation*}%
and 
\begin{equation}
\gamma _{N}^{(k)}(t,x_{\sigma (1)},\cdots x_{\sigma (k)},x_{\sigma
(1)}^{\prime },\cdots x_{\sigma (k)}^{\prime })=\gamma
_{N}^{(k)}(t,x_{1},\cdots ,x_{k},x_{1}^{\prime },\cdots ,x_{k}^{\prime }),
\label{condition:symmetric}
\end{equation}%
for any permutation $\sigma ,$ and satisfy the 3D quantum BBGKY hierarchy of
equations which written in operator form is 
\begin{equation}
i\partial _{t}\gamma _{N}^{(k)}+\left[ \triangle _{\mathbf{x}_{k}},\gamma
_{N}^{(k)}\right] =\begin{aligned}[t] &\frac{1}{N}\sum_{1\leqslant
i<j\leqslant k}\left[ V_{N}\left( x_{i}-x_{j}\right) ,\gamma
_{N}^{(k)}\right] \\ &+\frac{N-k}{N}
\sum_{j=1}^{k}\limfunc{Tr}\nolimits_{k+1}\left[ V_{N}\left(
x_{j}-x_{k+1}\right) ,\gamma _{N}^{(k+1)}\right] \end{aligned}
\label{hierarchy:BBGKY hierarchy in operator form}
\end{equation}%
if we do not distinguish $\gamma _{N}^{(k)}$ as a kernel and the operator it
defines.\footnote{%
From here on out, we consider only the $\beta >0$ case. For $\beta =0,$ see 
\cite%
{E-Y1,Frolich,KnowlesAndPickl,MichelangeliSchlein,RodnianskiAndSchlein,GMM1,GMM2,Chen2ndOrder, LChen}
.} Here the operator $V_{N}\left( x\right) $ represents multiplication by
the function $V_{N}\left( x\right) $, where 
\begin{equation}
V_{N}\left( x\right) =N^{3\beta }V(N^{\beta }x),  \label{def:V_N}
\end{equation}%
and $\limfunc{Tr}\nolimits_{k+1}$ means taking the $k+1$ trace, for example, 
\begin{equation*}
\limfunc{Tr}\nolimits_{k+1}V_{N}\left( x_{j}-x_{k+1}\right) \gamma
_{N}^{(k+1)}=\int V_{N}\left( x_{j}-x_{k+1}\right) \gamma _{N}^{(k+1)}(t,%
\mathbf{x}_{k},x_{k+1};\mathbf{x}_{k}^{\prime },x_{k+1})dx_{k+1}.
\end{equation*}

In 2008, S. Klainerman and M. Machedon implicitly made the following
conjecture on the solution of the BBGKY hierarchy.

\begin{conjecture}[Klainerman-Machedon \protect\cite{KlainermanAndMachedon}]

\label{Conjecture:KM}Assume the interaction parameter $\beta \in \left( 0,1%
\right] $. Suppose that the sequence $\left\{ \gamma _{N}^{(k)}(t,\mathbf{x}%
_{k};\mathbf{x}_{k}^{\prime })\right\} _{k=1}^{N}$ is a solution to the 3D
quantum BBGKY hierarchy (\ref{hierarchy:BBGKY hierarchy in operator form})
subject to the energy condition: there is a $C_{0}$ (independent of $N$ and $%
k$) such that for any $k\geqslant 0,$ there is a $N_{0}(k)$ such that 
\begin{equation}
\forall \;N\geqslant N_{0}(k)\,,\qquad \sup_{t\in \mathbb{R}}\limfunc{Tr}%
\left( \dprod\limits_{j=1}^{k}\left( 1-\triangle _{x_{j}}\right) \right)
\gamma _{N}^{(k)}\leqslant C_{0}^{k}.  \label{condition:energy condition Tr}
\end{equation}%
Then, for every finite time $T$, every limit point $\Gamma =\left\{ \gamma
^{(k)}\right\} _{k=1}^{\infty }$ of $\left\{ \Gamma _{N}\right\}
_{N=1}^{\infty }=\left\{ \left\{ \gamma _{N}^{(k)}\right\}
_{k=1}^{N}\right\} _{N=1}^{\infty }$ in $\bigoplus_{k\geqslant 1}C\left( %
\left[ 0,T\right] ,\mathcal{L}_{k}^{1}\right) $ with respect to the product
topology $\tau _{prod}$ (defined in Appendix \ref{appendix:ESYTopology})
satisfies the space-time bound: there is a $C$ independent of $j,k$ such
that 
\begin{equation}
\int_{0}^{T}\left\Vert R^{(k)}B_{j,k+1}\gamma ^{(k+1)}\left( t\right)
\right\Vert _{L_{\mathbf{x},\mathbf{x}^{\prime }}^{2}}dt\leqslant C^{k},
\label{bound:KMConjecture}
\end{equation}%
where $\mathcal{L}_{k}^{1}$ is the space of trace class operators on $L^{2}(%
\mathbb{R}^{3k}),$ $R^{(k)}=\dprod\limits_{j=1}^{k}\left( \left\vert \nabla
_{x_{j}}\right\vert \left\vert \nabla _{x_{j}^{\prime }}\right\vert \right)
, $ and 
\begin{equation*}
B_{j,k+1}=\limfunc{Tr}\nolimits_{k+1}\left[ \delta \left(
x_{j}-x_{k+1}\right) ,\gamma ^{(k+1)}\right] .
\end{equation*}
\end{conjecture}

Though Conjecture \ref{Conjecture:KM} was not stated explicitly in \cite%
{KlainermanAndMachedon}, as we will explain after stating Theorem \ref%
{THM:MainTHM}, the bound (\ref{bound:KMConjecture}) is necessary to
implement Klainerman-Machedon's powerful and flexible approach in the most
involved part of the quantum Kac's program which mathematically proves the
cubic nonlinear Schr\"{o}dinger equation (NLS) as the $N\rightarrow \infty $
limit of quantum $N$-body dynamics. Kirkpatrick-Schlein-Staffilani \cite%
{Kirpatrick} completely solved the $\mathbb{T}^{2}$ version of Conjecture %
\ref{Conjecture:KM} and were the first to successfully implement such an
approach. However, Conjecture \ref{Conjecture:KM}, the $\mathbb{R}^{3}$
version as stated, was fully open until recently. T. Chen and Pavlovi\'{c} 
\cite{TChenAndNPSpace-Time} have been able to prove Conjecture \ref%
{Conjecture:KM} for $\beta \in \left( 0,1/4\right) $. In \cite%
{Chen3DDerivation}, X.C simplified and extended the result to the range of $%
\beta \in $ $\left( 0,2/7\right] .$ We devote this paper to proving
Conjecture \ref{Conjecture:KM} for $\beta \in \left( 0,2/3\right) $. In
particular, we surpass the self-interaction thresold$\footnote{%
We will explain why we call the $\beta >1/3$ case self-interaction later in
this introduction.}$, namely $\beta =1/3$. To be specific, we prove the
following theorem.

\begin{theorem}[Main theorem]
\label{THM:MainTHM}Assume the interaction parameter $\beta \in \left(
0,2/3\right) $ and the pair interaction $V\in L^{1}\cap W^{2,\frac{6}{5}+}.$
Under condition (\ref{condition:energy condition}), every limit point $%
\Gamma =\left\{ \gamma ^{(k)}\right\} _{k=1}^{\infty }$ of $\left\{ \Gamma
_{N}\right\} _{N=1}^{\infty }$ satisfies the Klainerman-Machedon space-time
bound (\ref{bound:KMConjecture}).
\end{theorem}

Establishing the $N\rightarrow \infty $ limit of hierarchy (\ref%
{hierarchy:BBGKY hierarchy in operator form}) justifies the mean-field limit
in the Gross-Pitaevskii theory. Such an approach was first proposed by Spohn 
\cite{Spohn}\ and can be regarded as a quantum version of Kac's program. We
see that, as $N\rightarrow \infty ,$ hierarchy (\ref{hierarchy:BBGKY
hierarchy in operator form}) formally converges to the infinite
Gross-Pitaevskii hierarchy 
\begin{equation}
i\partial _{t}\gamma ^{(k)}+\left[ \triangle _{\mathbf{x}_{k}},\gamma ^{(k)}%
\right] =\left( \int V(x)dx\right) \sum_{j=1}^{k}\limfunc{Tr}\nolimits_{k+1}%
\left[ \delta \left( x_{j}-x_{k+1}\right) ,\gamma ^{(k+1)}\right] .
\label{hierarchy:GP hierarchy}
\end{equation}%
When the initial data is factorized 
\begin{equation*}
\gamma ^{(k)}(0,\mathbf{x}_{k};\mathbf{x}_{k}^{\prime
})=\dprod\limits_{j=1}^{k}\phi _{0}(x_{j})\bar{\phi}_{0}(x_{j}),
\end{equation*}%
hierarchy \eqref{hierarchy:GP hierarchy} has a special solution 
\begin{equation}
\gamma ^{(k)}(t,\mathbf{x}_{k};\mathbf{x}_{k}^{\prime
})=\dprod\limits_{j=1}^{k}\phi (t,x_{j})\bar{\phi}(t,x_{j}),
\label{eqn:product form}
\end{equation}%
if $\phi $ solves the cubic NLS 
\begin{equation}
i\partial _{t}\phi =-\triangle _{x}\phi +\left( \int V(x)dx\right)
\left\vert \phi \right\vert ^{2}\phi .  \label{eqn:cubic NLS}
\end{equation}%
Thus such a limit process shows that, in an appropriate sense, 
\begin{equation*}
\lim_{N\rightarrow \infty }\gamma _{N}^{(k)}=\dprod\limits_{j=1}^{k}\phi
(t,x_{j})\bar{\phi}(t,x_{j}),
\end{equation*}%
hence justifies the mean-field limit.

Such a limit in 3D was first proved in a series of important papers \cite%
{E-E-S-Y1, E-S-Y1,E-S-Y2,E-S-Y5, E-S-Y3} by Elgart, Erd\"{o}s, Schlein, and
Yau.\footnote{%
Around the same time, there was the 1D work \cite{AGT}.} Briefly, the
Elgart-Erd\"{o}s-Schlein-Yau approach\footnote{%
See \cite{SchleinNew,GM1,Pickl} for different approaches.} can be described
as the following:

\bigskip

\noindent \emph{Step A}. Prove that, with respect to the topology $\tau
_{prod}$ defined in Appendix \ref{appendix:ESYTopology}, the sequence $%
\left\{ \Gamma _{N}\right\} _{N=1}^{\infty }$ is compact in the space $%
\bigoplus_{k\geqslant 1}C\left( \left[ 0,T\right] ,\mathcal{L}^{1}\left( 
\mathbb{R}^{3k}\right) \right) )$.

\bigskip

\noindent \emph{Step B}. Prove that every limit point $\Gamma =\left\{
\gamma ^{(k)}\right\} _{k=1}^{\infty }$ of $\left\{ \Gamma _{N}\right\}
_{N=1}^{\infty }$ must verify hierarchy \eqref{hierarchy:GP hierarchy}.

\bigskip

\noindent \emph{Step C}. Prove that, in the space in which the limit points
from Step B lie, there is a unique solution to hierarchy (\ref{hierarchy:GP
hierarchy}). Thus $\{\Gamma_N\}_{N=1}^\infty$ is a compact sequence with
only one limit point. Hence $\Gamma_N \to \Gamma$ as $N\to \infty$.

\bigskip

In 2007, Erd\"{o}s, Schlein, and Yau obtained the first uniqueness theorem
of solutions \cite[Theorem 9.1]{E-S-Y2} to the hierarchy 
\eqref{hierarchy:GP
hierarchy}. The proof is surprisingly delicate -- it spans 63 pages and uses
complicated Feynman diagram techniques. The main difficulty is that
hierarchy \eqref{hierarchy:GP hierarchy} is a system of infinitely coupled
equations. Briefly, \cite[Theorem 9.1]{E-S-Y2} is the following:

\begin{theorem}[{Erd\"os-Schlein-Yau uniqueness {\protect\cite[Theorem 9.1]%
{E-S-Y2}}}]
There is at most one nonnegative symmetric operator sequence $\left\{ \gamma
^{(k)}\right\} _{k=1}^{\infty }$ that solves hierarchy 
\eqref{hierarchy:GP
hierarchy} subject to the energy condition 
\begin{equation}
\sup_{t\in \lbrack 0,T]}\limfunc{Tr}\left( \dprod\limits_{j=1}^{k}\left(
1-\triangle _{x_{j}}\right) \right) \gamma ^{(k)}\leqslant C^{k}.
\label{bound:ESYCondition}
\end{equation}
\end{theorem}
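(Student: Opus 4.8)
The plan is to run the Duhamel--Feynman-graph scheme of Erd\"{o}s--Schlein--Yau \cite{E-S-Y2}. By linearity of hierarchy \eqref{hierarchy:GP hierarchy}, the difference of two solutions with the same initial data again solves \eqref{hierarchy:GP hierarchy}, now with $\gamma ^{(k)}(0)=0$ for every $k$, and still obeys the energy bound \eqref{bound:ESYCondition} (with a possibly larger constant); so it suffices to show that any such zero-data solution vanishes identically on $[0,T]$. Write $U^{(k)}(t)$ for the propagator of the free equation $i\partial _{t}\gamma ^{(k)}+[\triangle _{\mathbf{x}_{k}},\gamma ^{(k)}]=0$ and put $B^{(k+1)}=\sum_{j=1}^{k}B_{j,k+1}$, so that Duhamel's formula (with the free term dropping out because the data is zero) reads $\gamma ^{(k)}(t)=-i\int_{0}^{t}U^{(k)}(t-s)B^{(k+1)}\gamma ^{(k+1)}(s)\,ds$.

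Iterating Duhamel $n$ times and discarding the vanishing intermediate free terms, one obtains, with $\mathcal{T}_{n}(t)=\{0\le t_{n}\le \dots \le t_{1}\le t\}$,
\begin{equation*}
\gamma ^{(k)}(t)=(-i)^{n}\int_{\mathcal{T}_{n}(t)}U^{(k)}(t-t_{1})B^{(k+1)}U^{(k+1)}(t_{1}-t_{2})B^{(k+2)}\cdots B^{(k+n)}\gamma ^{(k+n)}(t_{n})\,dt_{1}\cdots dt_{n}.
\end{equation*}
Expanding each $B^{(k+j)}$ into its $k+j-1$ summands $B_{\ell ,k+j}$ produces $\tfrac{(k+n-1)!}{(k-1)!}\sim n!$ terms, so a crude triangle inequality loses. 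The strategy is: (i) control the tail factor $\gamma ^{(k+n)}(t_{n})$ by the energy condition \eqref{bound:ESYCondition}, which for nonnegative operators bounds $\|S^{(k+n)}\gamma ^{(k+n)}(t_{n})\|_{L^{2}}$ (equivalently $\tr|(\prod_{j}(1-\triangle _{x_{j}}))\gamma ^{(k+n)}(t_{n})|$) by $C^{k+n}$ uniformly in $t_{n}$; (ii) estimate each Duhamel summand in a space-time norm, extracting enough smallness to beat the factorial; (iii) re-sum.

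For step (ii) one passes to the momentum representation: each summand becomes an integral over momentum variables in which the free propagators contribute oscillatory phases $e^{it_{j}(|\mathbf{p}|^{2}-|\mathbf{p}^{\prime }|^{2})}$ while each contraction $B_{\ell ,k+j}$ identifies and merges momenta according to a Feynman graph. The core of the argument is a multilinear ``collapsing'' estimate: the composition of free flows with the $\delta $-contractions maps the Sobolev-type space controlled by \eqref{bound:ESYCondition} into itself with an operator norm uniform over the graphs, and the $n$-fold time integration over the simplex $\mathcal{T}_{n}(t)$ furnishes a gain of order $t^{n}/n!$ (or, after interpolating the dispersive kernel bound against the endpoint Strichartz estimate, $C^{n}t^{\alpha n}/(\alpha n)!$ for some $\alpha \in (0,1]$). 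Combined with the $C^{k+n}$ from step (i), each summand is $\lesssim C^{k}C^{n}t^{n}/n!$; since there are $\sim n!$ of them, one then needs a combinatorial reduction — the Feynman-graph resummation of \cite{E-S-Y2}, streamlined later by Klainerman--Machedon \cite{KlainermanAndMachedon} into the ``board game''/upper-echelon argument — grouping graphs whose contributions coincide once the order of the time integrations is fixed, which cuts the count to $C^{n}$. Then the whole sum is $\lesssim C^{k}(C^{\prime }t)^{n}/n!\to 0$ as $n\to \infty $ for $t\le \tau $ with $\tau $ independent of $k$; iterating on $[\tau ,2\tau ],[2\tau ,3\tau ],\dots $ (the energy bound is preserved at each restart) gives $\gamma ^{(k)}\equiv 0$ on $[0,T]$.

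The main obstacle is step (ii): proving the multilinear collapsing estimate with a graph-uniform constant while simultaneously harvesting the $1/n!$ time gain, together with the bookkeeping that reduces the number of Feynman graphs from $n!$ to $C^{n}$ — precisely the delicate analysis occupying \cite[Theorem 9.1]{E-S-Y2}. (An alternative route would be to first upgrade \eqref{bound:ESYCondition} to the Klainerman--Machedon space-time bound \eqref{bound:KMConjecture} and then invoke the Klainerman--Machedon uniqueness theorem, but establishing that space-time bound for an abstract GP-hierarchy solution from the energy bound alone is itself a nontrivial collapsing estimate of essentially the same difficulty.)
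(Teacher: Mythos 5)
There is no proof of this statement in the paper for you to match: the theorem is quoted purely as background, with the proof attributed to \cite[Theorem 9.1]{E-S-Y2} and described there as a 63-page Feynman-diagram argument. Measured as a standalone proof, your proposal has a genuine gap: it is a strategy outline in which every load-bearing step is deferred. Steps (ii) and (iii) --- the graph-uniform collapsing estimate in the norm controlled only by \eqref{bound:ESYCondition}, and the combinatorial reduction of the $\sim n!$ contraction terms to $C^{n}$ effective classes --- are exactly the content of the Erd\"os--Schlein--Yau proof, and you explicitly leave them as ``the delicate analysis occupying \cite{E-S-Y2}.'' Invoking the Klainerman--Machedon board game here does not close the gap either: that resummation is exploited by integrating in time and applying Cauchy--Schwarz against a space-time ($L^{1}_{t}L^{2}$-type) bound on the collapsed terms, i.e.\ precisely the bound \eqref{bound:KMConjecture} that the energy condition \eqref{bound:ESYCondition} alone is not known to imply for abstract hierarchy solutions --- this is the whole point of Conjecture \ref{Conjecture:KM} and of the present paper. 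Under \eqref{bound:ESYCondition} only, ESY must instead estimate each Feynman graph directly in an $L^{\infty}_{t}$ trace/Sobolev framework, with the smallness coming from powers of $t$ extracted through singular momentum-space integrals, not from a clean $t^{n}/n!$ simplex factor combined with an a posteriori graph count; your sketch conflates the two mechanisms.

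Two smaller points. First, the difference of two nonnegative solutions is not nonnegative, so ``the difference still obeys \eqref{bound:ESYCondition}'' is not literally available; one instead stops the Duhamel iteration at $\gamma_{1}^{(k+n)}-\gamma_{2}^{(k+n)}$ and bounds each solution separately, using nonnegativity of each to convert the trace bound into the kernel-space bound actually used in the estimates. Second, the bookkeeping that the free terms ``drop out'' is fine, but the claim that each summand is $\lesssim C^{k}C^{n}t^{n}/n!$ with a graph-uniform constant is precisely the assertion that needs proof; as written, the argument assumes what is to be shown. In short, the plan correctly identifies the architecture of the known proof, but nothing beyond the soft reductions (linearity, Duhamel iteration, restarting in time) is actually established.
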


In \cite{KlainermanAndMachedon}, based on their null form paper \cite%
{KlainermanMachedonNullForm}, Klainerman and Machedon gave a different proof
of the uniqueness of hierarchy (\ref{hierarchy:GP hierarchy}) in a space
different from that used in \cite[Theorem 9.1]{E-S-Y2}. The proof is shorter
(13 pages) than the proof of \cite[Theorem 9.1]{E-S-Y2}. Briefly, \cite[%
Theorem 1.1]{KlainermanAndMachedon} is the following:

\begin{theorem}[{Klainerman-Machedon uniqueness {\protect\cite[Theorem 1.1]%
{KlainermanAndMachedon}}}]
There is at most one symmetric operator sequence $\left\{ \gamma
^{(k)}\right\} _{k=1}^{\infty }$ that solves hierarchy 
\eqref{hierarchy:GP
hierarchy} subject to the space-time bound \eqref{bound:KMConjecture}.
\end{theorem}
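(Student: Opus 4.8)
The plan is to run the Klainerman--Machedon uniqueness scheme: pass to the difference of two solutions, iterate the Duhamel formula, defeat the resulting combinatorial explosion with the Klainerman--Machedon ``board game'', and close using the space-time collapsing estimate. Concretely: suppose $\{\gamma^{(k)}\}$ and $\{\tilde\gamma^{(k)}\}$ both solve \eqref{hierarchy:GP hierarchy} with the same initial data and both satisfy \eqref{bound:KMConjecture}, and set $\Gamma^{(k)}=\gamma^{(k)}-\tilde\gamma^{(k)}$, which then solves \eqref{hierarchy:GP hierarchy} with zero initial data. Writing $U^{(m)}(t)\gamma:=e^{it\triangle_{\mathbf x_m}}\gamma\,e^{-it\triangle_{\mathbf x_m}}$ and $B^{(m)}=(\int V)\sum_{i=1}^{m-1}B_{i,m}$, Duhamel gives $\Gamma^{(k)}(t)=-i\int_0^t U^{(k)}(t-t_1)B^{(k+1)}\Gamma^{(k+1)}(t_1)\,dt_1$, and iterating $n$ times --- the boundary contributions all vanish because the data is zero --- expresses $\Gamma^{(k)}(t)$ as a time-ordered integral over $\{t>t_1>\cdots>t_n>0\}$ of the operator string $U^{(k)}(t-t_1)B^{(k+1)}U^{(k+1)}(t_1-t_2)B^{(k+2)}\cdots B^{(k+n)}$ applied to $\Gamma^{(k+n)}(t_n)$. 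Expanding each $B^{(k+j)}$ into its $k+j-1$ summands turns this into a sum of $\tfrac{(k+n-1)!}{(k-1)!}$ terms, which is $\sim n!$ for fixed $k$ --- far too many to estimate one at a time.

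The combinatorial heart of the argument is the Klainerman--Machedon ``board game'': exploiting the freedom to relabel the interior times and reorder the integrations, one partitions these terms into at most $C^{k+n}$ equivalence classes, each of which reassembles into a single integral $\int_{D}U^{(k)}(t-t_1)B_{i_1,k+1}U^{(k+1)}(t_1-t_2)B_{i_2,k+2}\cdots B_{i_n,k+n}\Gamma^{(k+n)}(t_n)\,d\underline t_n$ over a region $D\subseteq[0,t]^n$ carrying an ``admissible'' index string $(i_1,\dots,i_n)$. Thus the factorial count is replaced by an exponential one, at the price of integrating over less symmetric time regions.

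The analytic input is the $3$D Klainerman--Machedon space-time (collapsing) estimate: for $s\mapsto f(s)$,
\[
\Big\|R^{(m)}B_{i,m+1}\!\int_0^{t}U^{(m+1)}(t-s)f(s)\,ds\Big\|_{L^{2}_{t\in\mathbb R,\,\mathbf x,\mathbf x'}}\lesssim\big\|R^{(m+1)}f\big\|_{L^{1}_{s}L^{2}_{\mathbf x,\mathbf x'}},
\]
which one proves in Fourier space: the $\delta$ inside $B_{i,m+1}$ imposes a frequency identity (restriction to a hyperplane), and the homogeneous derivative weights in $R^{(m)}$ versus $R^{(m+1)}$ are precisely what is needed to absorb the attendant loss, via a null-form / Strichartz computation. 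Iterating this estimate down the chain of a board-game term (and using Minkowski's inequality for the inner time integrals) bounds each such term by $\bigl(CT_0^{1/2}\bigr)^{n}\sup_{t\in[0,T_0]}\|R^{(k+n)}\Gamma^{(k+n)}(t)\|_{L^2_{\mathbf x,\mathbf x'}}$ on a subinterval $[0,T_0]$; and from the Duhamel identity together with \eqref{bound:KMConjecture} applied to $\gamma$ and $\tilde\gamma$ at level $k+n$ one has $\sup_{[0,T_0]}\|R^{(k+n)}\Gamma^{(k+n)}(t)\|_{L^2_{\mathbf x,\mathbf x'}}\le C^{k+n}$. Summing over the $\le C^{k+n}$ board-game terms gives $\|R^{(k)}\Gamma^{(k)}\|_{L^2_{[0,T_0]\times\mathbb R^{3k}\times\mathbb R^{3k}}}\le C^{2k}\bigl(C^{2}T_0^{1/2}\bigr)^{n}$, which $\to 0$ as $n\to\infty$ once $T_0$ is chosen small enough that $C^{2}T_0^{1/2}<1$. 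Hence $\Gamma^{(k)}\equiv 0$ on $[0,T_0]$ for every $k$; since \eqref{bound:KMConjecture} also holds on $[T_0,2T_0]$, iterating over consecutive intervals of length $T_0$ yields $\Gamma^{(k)}\equiv 0$ on $[0,T]$, i.e.\ $\gamma^{(k)}=\tilde\gamma^{(k)}$.

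The step I expect to be the real obstacle is the board game: one has to pin down exactly the right equivalence relation on the $\tfrac{(k+n-1)!}{(k-1)!}$ Duhamel terms --- merging terms that differ only by a relabeling of interior times and a permutation of integration order --- so that simultaneously (i) the number of classes is only $C^{k+n}$ and (ii) each class reassembles into an integral over a region on which the collapsing estimate can still be iterated cleanly. Without step (i) the $\sim n!$ growth swamps the $\bigl(CT_0^{1/2}\bigr)^{n}$ smallness and the series diverges; the collapsing estimate itself, though a nontrivial piece of harmonic analysis (the Klainerman--Machedon null-form estimate), enters only to supply the uniform per-step bound once the combinatorics have been tamed.
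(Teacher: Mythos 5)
This theorem is cited, not proved, in the paper: it is \cite[Theorem 1.1]{KlainermanAndMachedon}, quoted as background, so there is no in-paper proof to compare against. That said, your sketch does capture the actual Klainerman--Machedon argument — difference of solutions with zero data, $n$-fold Duhamel iteration, the board-game reorganization of the $\sim n!$ terms into $\le C^{n}$ classes, the collapsing estimate applied down the chain, smallness on a short interval, and a bootstrap in $T_0$ — and you correctly identify the board game as the combinatorial crux.

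One step is off, and it is the step where the hypothesis actually gets used. You iterate the collapsing estimate all the way down so that the surviving factor is $\sup_{t\in[0,T_0]}\|R^{(k+n)}\Gamma^{(k+n)}(t)\|_{L^2_{\mathbf x,\mathbf x'}}$, and then assert that \eqref{bound:KMConjecture} (applied to $\gamma$ and $\tilde\gamma$) yields $\sup_t\|R^{(k+n)}\Gamma^{(k+n)}(t)\|_{L^2}\le C^{k+n}$. But \eqref{bound:KMConjecture} is an $L^1_tL^2_{\mathbf x,\mathbf x'}$ bound on the \emph{collapsed} quantity $R^{(k)}B_{j,k+1}\gamma^{(k+1)}$, not a bound on $R^{(k)}\gamma^{(k)}$ itself; no such pointwise-in-$t$ control on $R^{(k+n)}\Gamma^{(k+n)}$ is available from the hypothesis (that would require an extra a priori energy bound of the type \eqref{bound:ESYCondition}, which the KM theorem deliberately avoids assuming). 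The fix is to stop the peeling one step earlier: after $n-1$ applications of the collapsing estimate (each producing a factor $CT_0^{1/2}$), the surviving factor is $\|R^{(k+n-1)}B_{i_n,k+n}\Gamma^{(k+n)}\|_{L^1_{[0,T_0]}L^2_{\mathbf x,\mathbf x'}}$, which is exactly what \eqref{bound:KMConjecture} controls (up to a factor $2$ from the triangle inequality $\Gamma=\gamma-\tilde\gamma$). This is the same stopping pattern used in \eqref{Relation:IteratngCollapsingEstimate} of the present paper; with that correction the smallness $\bigl(CT_0^{1/2}\bigr)^{n-1}C^{k+n}\to0$ goes through as you describe.
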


For special cases like (\ref{eqn:product form}), condition (\ref%
{bound:ESYCondition}) is actually 
\begin{equation}
\sup_{t\in \lbrack 0,T]}\left\Vert \left\langle \nabla _{x}\right\rangle
\phi \right\Vert _{L^{2}}\leqslant C,  \label{estimate:ESYforNLS}
\end{equation}%
while condition (\ref{bound:KMConjecture}) means 
\begin{equation}
\int_{0}^{T}\left\Vert \left\vert \nabla _{x}\right\vert \left( \left\vert
\phi \right\vert ^{2}\phi \right) \right\Vert _{L^{2}}dt\leqslant C.
\label{estimate:KMforNLS}
\end{equation}%
When $\phi $ satisfies NLS (\ref{eqn:cubic NLS}), both are known. In fact,
due to the Strichartz estimate \cite{Keel-Tao}, (\ref{estimate:ESYforNLS})
implies (\ref{estimate:KMforNLS}), that is, condition (\ref%
{bound:KMConjecture}) seems to be a bit weaker than condition (\ref%
{bound:ESYCondition}). The proof of \cite[Theorem 1.1]{KlainermanAndMachedon}
(13 pages) is also considerably shorter than the proof of \cite[Theorem 9.1]%
{E-S-Y2} (63 pages). It is then natural to wonder whether \cite[Theorem 1.1]%
{KlainermanAndMachedon} simplifies Step C. To answer such a question it is
necessary to know whether the limit points in Step B satisfy condition %
\eqref{bound:KMConjecture}, that is, whether Conjecture \ref{Conjecture:KM}
holds.

Away from curiosity, there are realistic reasons to study Conjecture \ref%
{Conjecture:KM}. While \cite[Theorem 9.1]{E-S-Y2} is a powerful theorem, it
is very difficult to adapt such an argument to various other interesting and
colorful settings: a different spatial dimension, a three-body interaction
instead of a pair interaction, or the Hermite operator instead of the
Laplacian. The last situation mentioned is physically important. On the one
hand, all the known experiments of BEC use harmonic trapping to stabilize
the condensate \cite{Anderson, Davis,Cornish, Ketterle, Stamper}. On the
other hand, different trapping strength produces quantum behaviors which do
not exist in the Boltzmann limit of classical particles nor in the quantum
case when the trapping is missing and have been experimentally observed \cite%
{Kettle3Dto2DExperiment, FrenchExperiment, Philips, NatureExperiment,
Another2DExperiment}. The Klainerman-Machedon approach applies easily in
these meaningful situations (\cite%
{Kirpatrick,TChenAndNP,ChenAnisotropic,Chen3DDerivation,C-H3Dto2D,Sohinger}%
). Thus proving Conjecture \ref{Conjecture:KM} actually helps to advance the
study of quantum many-body dynamic and the mean-field approximation in the
sense that it provides a flexible and powerful tool in 3D.

The well-posedness theory of the Gross-Pitaevskii hierarchy (\ref%
{hierarchy:GP hierarchy}) subject to general initial datum also requires
that the limits of the BBGKY hierarchy (\ref{hierarchy:BBGKY hierarchy in
operator form}) lie in the space in which the space-time bound (\ref%
{bound:KMConjecture}) holds. See \cite%
{TChenAndNPGPWellPosedness1,TChenAndNPGPWellPosedness2,TChenAndNPSpace-Time}.

As pointed out in \cite{E-E-S-Y1}, the study of the Hamiltonian (\ref%
{def:H_N}) is of particular interest when $\beta \in \left( 1/3,1\right] $.
The reason is the following. In physics, the initial datum $\psi _{N}\left(
0\right) $ of the Hamiltonian evolution $e^{itH_{N}}\psi _{N}\left( 0\right) 
$ is usually assumed to be close to the ground state of the Hamiltonian 
\begin{equation*}
H_{N,0}=-\triangle _{\mathbf{x}_{N}}+\omega _{0}^{2}\left\vert \mathbf{x}%
_{N}\right\vert ^{2}+\frac{1}{N}\sum_{1\leqslant i<j\leqslant N}N^{3\beta
}V(N^{\beta }\left( x_{i}-x_{j}\right) ).
\end{equation*}%
The preparation of the available experiments and the mathematical work \cite%
{Lieb2} by Lieb, Seiringer, Solovej and Yngvason confirm this assumption.
Such an initial datum $\psi _{N}\left( 0\right) $ is localized in space. We
can assume all $N$ particles are in a box of length $1$. Let the effective
radius of the pair interaction $V$ be $a,$ then the effective radius of $%
V_{N}$ is about $a/N^{\beta }$. Thus every particle in the box interacts
with $\left( a/N^{\beta }\right) ^{3}\times N$ other particles. Thus, for $%
\beta >1/3$ and large $N$, every particle interacts with only itself. This
exactly matches the Gross-Pitaevskii theory that the many-body effect should
be modeled by a strong on-site self-interaction. Therefore, for the
mathematical justification of the Gross-Pitaevskii theory, it is of
particular interest to prove Conjecture \ref{Conjecture:KM} for
self-interaction ($\beta >1/3)$ as well.

To the best of our knowledge, the main theorem (Theorem \ref{THM:MainTHM})
in the current paper is the first result in proving Conjecture \ref%
{Conjecture:KM} for self-interaction ($\beta >1/3).$ For $\beta \leqslant
1/3 $, the first progress of Conjecture \ref{Conjecture:KM} is the $\beta
\in (0,1/4)$ work \cite{TChenAndNPSpace-Time}\ by T. Chen and N. Pavlovi\'{c}
and then the $\beta \in (0,2/7]$ work \cite{Chen3DDerivation}\ by X.C. As a
matter of fact, the main theorem (Theorem \ref{THM:MainTHM}) in the current
paper has already fulfilled the original intent of \cite%
{KlainermanAndMachedon}, namely, simplifying the uniqueness argument of \cite%
{E-S-Y2}, because \cite{E-S-Y2} deals with $\beta \in (0,3/5).$ Conjecture %
\ref{Conjecture:KM} for $\beta \in \lbrack 2/3,1]$ is still open.

\subsection{Organization of the paper}

In \S \ref{S:proof-main-theorem}, we outline the proof of Theorem \ref%
{THM:MainTHM}. The overall pattern follows that introduced by X.C. \cite%
{Chen3DDerivation}, who obtained Theorem \ref{THM:MainTHM} for $\beta \in (0,%
\frac{2}{7}]$. Let $P_{\leq M}^{(k)}$ be the Littlewood-Paley projection
defined in \eqref{E:LP-def}. Theorem \ref{THM:MainTHM} will follow once it
is established that for all $M\geq 1$, there exists $N_{0}$ depending on $M$
such that for all $N\geq N_{0}$, there holds 
\begin{equation}
\Vert P_{\leq M}^{(k)}R^{(k)}B_{N,j,k+1}\gamma _{N}^{(k+1)}(t)\Vert
_{L_{T}^{1}L_{\mathbf{x},\mathbf{x}^{\prime }}^{2}}\leqslant C^{k}
\label{E:main-bound1}
\end{equation}%
where $B_{N,j,k+1}$ is defined by \eqref{E:BN-def}. Substituting the
Duhamel-Born expansion, carried out to coupling level $l_{c}$, of the BBGKY
hierarchy, this is reduced to proving analogous bounds on the free part,
potential part, and interaction part, defined in \S \ref%
{S:proof-main-theorem}. Each part is reduced via the Klainerman-Machedon
board game. Estimates for the free part and interaction part were previously
obtained by X.C. \cite{Chen3DDerivation} but are reproduced here for
convenience in Appendix \ref{appendix:estimating free and interaction}. For
the estimate of the interaction part, one takes $l_{c}=\ln N$, the utility
of which was first observed by T. Chen and N. Pavlovi\'{c} \cite%
{TChenAndNPSpace-Time}.

The main new achievement of our paper is the improved estimates on the
potential part, which are discussed in \S \ref{Sec:estimating potential part}%
. We make use of the endpoint Strichartz estimate, phrased in terms of the $%
X_b$ norm, in place of the Sobolev inequality employed by X.C \cite%
{Chen3DDerivation}. We also introduce frequency localized versions of the
Klainerman-Machedon collapsing estimates, allowing us to exploit the
frequency localization in \eqref{E:main-bound1}. Specifically, the operator $%
P^{(k)}_{\leq M}$ does not commute with $B_{N,j,k+1}$, however, the
composition $P^{(k)}_{\leq M_k} B_{N,j,k+1} P^{(k+1)}_{\sim M_{k+1}}$ enjoys
better bounds if $M_{k+1} \gg M_k$. We prove the Strichartz estimate and the
frequency localized Klainerman-Machedon collapsing estimates in \S \ref%
{S:Strichartz-X}. Frequency localized space-time techniques of this type
were introduced by Bourgain \cite[Chapter IV, \S 3]{Bourgain} into the study
of the well-posedness for nonlinear Schr\"odinger equations and other
nonlinear dispersive PDE.

In X.C. \cite{Chen3DDerivation}, \eqref{E:main-bound1} is obtained without
the frequency localization $P^{(k)}_{\leq M}$ for $\beta \in (0,\frac27]$.
In Theorem \ref{Theorem:example:beta up to 2/5}, we prove that this estimate
still holds without frequency localization for $\beta \in (0,\frac25)$ by
using the Strichartz estimate alone. This already surpasses the
self-interaction threshold $\beta = \frac13$. For the purpose of proving
Conjecture \ref{Conjecture:KM}, the frequency localized estimate %
\eqref{E:main-bound1} is equally good, but allows us to achieve higher $%
\beta $.

\subsection{Acknowledgements}

J.H. was supported in part by NSF grant DMS-0901582 and a Sloan Research
Fellowship (BR-4919), and X.C. received travel support from the same Sloan
Fellowship to visit U. Maryland. We would like to thank T. Chen, M.
Grillakis, M. Machedon, and N. Pavlovi\'{c} for very helpful discussions
related to this work, and we would like to thank the anonymous referee for
many helpful suggestions.

\section{Proof of the main theorem}

\label{S:proof-main-theorem}

We establish Theorem \ref{THM:MainTHM} in this section. For simplicity of
notation, we denote $\left\Vert \cdot \right\Vert_{L^{p}\left[ 0,T\right]L_{%
\mathbf{x,x}^{\prime }}^{2}}$ by $\left\Vert \cdot \right\Vert_{L_{T}^{p}L_{%
\mathbf{x,x}^{\prime }}^{2}}$ and denote $\left\Vert \cdot \right\Vert
_{L_{t}^{p}\left( \mathbb{R}\right) L_{\mathbf{x,x}^{\prime}}^{2}}$ by $%
\left\Vert \cdot \right\Vert _{L_{t}^{p}L_{\mathbf{x,x}^{\prime }}^{2}}$.
Let us begin by introducing some notation for Littlewood-Paley theory. Let $%
P_{\leq M}^{i}$ be the projection onto frequencies $\leq M$ and $P_{M}^{i}$
the analogous projections onto frequencies $\sim M$, acting on functions of $%
x_{i}\in \mathbb{R}^{3}$ (the $i$th coordinate). We take $M$ to be a dyadic
frequency range $2^{\ell }\geq 1$. Similarly, we define $P_{\leq
M}^{i^{\prime }}$ and $P_{M}^{i^{\prime }}$, which act on the variable $%
x_{i}^{\prime }$. Let 
\begin{equation}  \label{E:LP-def}
P_{\leq M}^{(k)}=\prod_{i=1}^{k}P_{\leq M}^{i}P_{\leq M}^{i^{\prime }}.
\end{equation}
To establish Theorem \ref{THM:MainTHM}, it suffices to prove the following
theorem.

\begin{theorem}
\label{THM:LocalizedMainTHM}Under the assumptions of Theorem \ref%
{THM:MainTHM}, there exists a $C$ (independent of $k,M,N$) such that for
each $M\geq 1$ there exists $N_{0}$ (depending on $M$) such that for $%
N\geqslant N_{0}$, there holds 
\begin{equation}
\Vert P_{\leq M}^{(k)}R^{(k)}B_{N,j,k+1}\gamma _{N}^{(k+1)}(t)\Vert
_{L_{T}^{1}L_{\mathbf{x},\mathbf{x}^{\prime }}^{2}}\leqslant C^{k}
\label{estimate:main target}
\end{equation}
where 
\begin{equation}  \label{E:BN-def}
B_{N,j,k+1}\gamma _{N}^{(k+1)}=\limfunc{Tr}\nolimits_{k+1}\left[ V_{N}\left(
x_{j}-x_{k+1}\right) ,\gamma _{N}^{(k+1)}\right] .
\end{equation}
\end{theorem}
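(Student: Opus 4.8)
The plan is to begin from the Duhamel--Born expansion of the BBGKY hierarchy, iterated to coupling level $K$, which expresses $B_{N,j,k+1}\gamma_N^{(k+1)}(t)$ as a sum over $k \leq j' \leq K$ of iterated Duhamel terms, plus a remainder term at coupling level $K$. As in X.C.\ \cite{Chen3DDerivation}, these terms naturally group into the \emph{free part} (fully expanded, no potential), the \emph{potential part} (terms where the innermost operator in the expansion is $B_{N}$ acting on an expanded chain), and the \emph{interaction part} (the remainder term carried out to level $K = \ln N$). The first reduction is therefore to prove, for each of the three parts separately, the bound $\Vert P_{\leq M}^{(k)} R^{(k)} (\text{part}) \Vert_{L_T^1 L^2_{\mathbf x,\mathbf x'}} \leq C^k$, with $N_0$ allowed to depend on $M$. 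The free-part and interaction-part estimates are already available from \cite{Chen3DDerivation} (reproduced in Appendix \ref{appendix:estimating free and interaction}); the interaction part requires the choice $K = \ln N$, exploiting that $(N-k)/N \to 1$ and a factorial gain from the Klainerman--Machedon board game combinatorics beats the $K!$ growth. So the bulk of the work is the potential part.

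For the potential part, the key steps are: (i) apply the Klainerman--Machedon board game to reduce the sum of $\sim 4^{j'}$ iterated terms, indexed by admissible ``special'' orderings of the collapsing operators $B$, to roughly $C^{j'}$ equivalence classes of terms, each of the schematic form $R^{(k)} \int U^{(k)}(t-t_1) B_{N} U^{(k+1)}(t_1 - t_2) B_{N} \cdots$ applied to the initial data, where $U^{(\ell)}$ is the free $\ell$-particle propagator and the $B_N$'s collapse successive variables; (ii) insert Littlewood--Paley projections at each level, writing $P^{(k)}_{\leq M}$ on the outside and, crucially, exploiting that $P^{(\ell)}_{\leq M_\ell} B_{N,j,\ell+1} P^{(\ell+1)}_{\sim M_{\ell+1}}$ has improved bounds when $M_{\ell+1} \gg M_\ell$ (the operators do not commute, but the frequency localized Klainerman--Machedon collapsing estimates of \S \ref{S:Strichartz-X} quantify the mismatch); (iii) estimate each collapsed factor by the frequency localized collapsing estimate phrased via the $X_b$ norm, and each propagator factor by the endpoint Strichartz estimate (in $X_b$ form), in place of the Sobolev inequality used in \cite{Chen3DDerivation}; (iv) sum the resulting geometric series in the dyadic parameters $M_k \ll M_{k+1} \ll \cdots$, and finally sum over $j'$ and over the $C^{j'}$ board-game classes, with the $C^k$ bound surviving because the energy condition \eqref{condition:energy condition} supplies the $C^{k}$-type control on $S^{(k+1)}\gamma_N^{(k+1)}$ at each coupling level.

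I expect the main obstacle to be step (iii)--(iv): making the frequency localized collapsing estimate quantitatively strong enough — i.e.\ with a power of $V_N$'s semiclassical scaling $N^\beta$ that is small enough to be absorbed, and with a favorable power of $M_{\ell+1}/M_\ell$ — so that after summing the dyadic series one still gains, for every $\beta < 2/3$, a net negative power of $N$ (or at least a bound uniform in $N \geq N_0(M)$) rather than an $N^{c\beta}$ blow-up. The precise bookkeeping of how $V_N(x_j - x_{k+1}) = N^{3\beta}V(N^\beta(x_j-x_{k+1}))$ interacts with the collapsing operator $\delta(x_j - x_{k+1})$ — concretely, estimating $\|V_N\|$ in the relevant mixed-norm spaces using $V \in L^1 \cap W^{2,6/5+}$ and trading the $L^1$ mass against derivative loss — is where the threshold $\beta = 2/3$ is pinned down, and getting the Strichartz exponents to close at the endpoint is the delicate point. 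Once the potential-part bound \eqref{estimate:main target} is established for the three parts and reassembled, Theorem \ref{THM:LocalizedMainTHM} follows, and then Theorem \ref{THM:MainTHM} follows by letting $M \to \infty$ and using the convergence $\Gamma_N \to \Gamma$ in $\tau_{prod}$ together with weak lower semicontinuity to pass \eqref{estimate:main target} to the limit, removing the $P_{\leq M}^{(k)}$ by monotone convergence.
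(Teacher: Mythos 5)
Your proposal follows essentially the same strategy as the paper's proof: reduce (WLOG) to $k=1$, expand $\gamma_N^{(2)}$ by Duhamel to coupling level $K$, split into free, potential, and interaction parts, import the free- and interaction-part bounds from \cite{Chen3DDerivation} (with $K=\ln N$ for the latter), and handle the potential part via the Klainerman--Machedon board game combined with frequency-localized collapsing estimates and the endpoint Strichartz estimate in $X_b$ form. One small correction: the endpoint Strichartz estimate (Lemma \ref{Lemma:TheStrichartzEstimate}) is applied only once, to the innermost factor $V_N^{(j)}\gamma_N^{(j)}$ in $X_{-\frac12+}$ after Lemma \ref{Lemma:b to b-1}, not to ``each propagator factor''; the propagator chain is peeled off by iterating the frequency-localized collapsing estimate (Lemma \ref{Lemma:LocalizedKM} and its $X_b$ version), and one also needs the combinatorial Lemmas \ref{L:iterates3}--\ref{L:iterates4} to absorb the sum over intermediate dyadic scales $M_2\leqslant\cdots\leqslant M_{j-1}$ into a power of $T$ before the $N^{\frac32\beta-1+}$ balance pins down $\beta<\frac23$.
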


We first explain how, assuming Theorem \ref{THM:LocalizedMainTHM}, we can
prove Theorem \ref{THM:MainTHM}. When condition (\ref{condition:energy
condition Tr}) holds, it has been proved in Elgart-Erd\"{o}s-Schlein-Yau 
\cite{E-E-S-Y1, E-S-Y1,E-S-Y2,E-S-Y5, E-S-Y3} and
Kirkpatrick-Schlein-Staffilani \cite{Kirpatrick} that, as trace class
operators%
\begin{equation}
B_{N,j,k+1}\gamma _{N}^{(k+1)}\rightharpoonup B_{j,k+1}\gamma ^{(k+1)}\text{
(weak*),}  \label{formula:weak*}
\end{equation}%
uniformly in $t$. (See \cite[(6.7)]{Kirpatrick} or \cite[(5.6)]{C-H3Dto2D},
for example.) Let $\mathcal{H}_{k}$ be the Hilbert-Schmidt operators on $%
L^{2}\left( \mathbb{R}^{3k}\right) .$ Recall that the test functions for
weak* convergence in $\mathcal{L}_{k}^{1}$ come from $\mathcal{K}_{k}$ and
the test functions for weak* convergence in $\mathcal{H}_{k}$ come from $%
\mathcal{H}_{k}.$ Thus the weak* convergence (\ref{formula:weak*}) as trace
class operator infers that as Hilbert-Schmidt operators,%
\begin{equation*}
B_{N,j,k+1}\gamma _{N}^{(k+1)}\rightharpoonup B_{j,k+1}\gamma ^{(k+1)}\text{
(weak*),}
\end{equation*}%
uniformly in $t,$ because $\mathcal{H}_{k}\subset \mathcal{K}_{k}$ i.e.
there are fewer test functions. Since $\mathcal{H}_{k}$ is reflexive, the
above weak* convergence is no different from the weak convergence. Moreover,
notice that $P_{\leq M}^{(k)}R^{(k)}J$ is simply another test function if $J$
is a test function, we know that as Hilbert-Schmidt operators%
\begin{equation*}
P_{\leq M}^{(k)}R^{(k)}B_{N,j,k+1}\gamma _{N}^{(k+1)}\rightharpoonup P_{\leq
M}^{(k)}R^{(k)}B_{j,k+1}\gamma ^{(k+1)}\text{ (weak)}
\end{equation*}%
uniformly in $t.$ Hence, by basic properties of weak convergence 
\begin{equation*}
\Vert P_{\leq M}^{(k)}R^{(k)}B_{j,k+1}\gamma ^{(k+1)}\Vert _{L_{T}^{1}L_{%
\mathbf{x},\mathbf{x}^{\prime }}^{2}}\leqslant \liminf_{N\rightarrow \infty
}\Vert P_{\leq M}^{(k)}R^{(k)}B_{N,j,k+1}\gamma _{N}^{(k+1)}(t)\Vert
_{L_{T}^{1}L_{\mathbf{x},\mathbf{x}^{\prime }}^{2}}\leqslant C^{k}.
\end{equation*}%
Since the above holds uniformly in $M$, we can send $M\rightarrow \infty $
and, by the monotone convergence theorem, we obtain 
\begin{equation*}
\Vert R^{(k)}B_{j,k+1}\gamma ^{(k+1)}\Vert _{L_{T}^{1}L_{\mathbf{x},\mathbf{x%
}^{\prime }}^{2}}\leqslant C^{k}
\end{equation*}%
which is exactly the Klainerman-Machedon space-time bound %
\eqref{bound:KMConjecture}. This completes the proof of Theorem \ref%
{THM:MainTHM}, assuming Theorem \ref{THM:LocalizedMainTHM}.

The rest of this paper is devoted to proving Theorem \ref%
{THM:LocalizedMainTHM}. We are going to establish estimate (\ref%
{estimate:main target}) for a sufficiently small $T$ which depends on the
controlling constant in condition (\ref{condition:energy condition Tr}) and
is independent of $k$, $N$ and $M,$ then a bootstrap argument together with
condition (\ref{condition:energy condition Tr}) give estimate (\ref%
{estimate:main target}) for every finite time at the price of a larger
constant $C$. Before we start, alert readers should keep in mind that, we
will mostly use the following form of condition (\ref{condition:energy
condition Tr}): 
\begin{equation}
\left\Vert S^{(k)}\gamma _{N}^{(k)}\right\Vert _{L_{t}^{\infty }L_{\mathbf{%
x,x}^{\prime }}^{2}}\leqslant C_{0}^{k}.  \label{condition:energy condition}
\end{equation}%
where $S^{(k)}=\dprod\limits_{j=1}^{k}\left( \left\langle \nabla
_{x_{j}}\right\rangle \left\langle \nabla _{x_{j}^{\prime }}\right\rangle
\right) $, because we will be working in $L^{2}$. To see how (\ref%
{condition:energy condition}) follows from (\ref{condition:energy condition
Tr}), one simply notices that 
\begin{eqnarray*}
&&\int \left\vert \left\langle \nabla _{x}\right\rangle \left\langle \nabla
_{x^{\prime }}\right\rangle \int \phi \left( x,r\right) \overline{\phi
\left( x^{\prime },r\right) }dr\right\vert ^{2}dxdx^{\prime } \\
&=&\int \left\vert \int \left\langle \nabla _{x}\right\rangle \phi \left(
x,r\right) \overline{\left\langle \nabla _{x^{\prime }}\right\rangle \phi
\left( x^{\prime },r\right) }dr\right\vert ^{2}dxdx^{\prime } \\
&\leqslant &\int \left( \int \left\langle \nabla _{x}\right\rangle \phi
\left( x,r\right) \overline{\left\langle \nabla _{x}\right\rangle \phi
\left( x,r\right) }dr\right) \left( \int \left\langle \nabla _{x^{\prime
}}\right\rangle \phi \left( x^{\prime },r\right) \overline{\left\langle
\nabla _{x^{\prime }}\right\rangle \phi \left( x^{\prime },r\right) }%
dr\right) dxdx^{\prime } \\
&=&\left( \int \phi \left( x,r\right) \overline{\left( 1-\triangle
_{x}\right) \phi \left( x,r\right) }dxdr\right) ^{2}.
\end{eqnarray*}

We start the proof of Theorem \ref{THM:LocalizedMainTHM} by rewriting
hierarchy (\ref{hierarchy:BBGKY hierarchy in operator form}) as 
\begin{eqnarray}
\gamma _{N}^{(k)}(t_{k}) &=&U^{(k)}(t_{k})\gamma
_{N,0}^{(k)}+\int_{0}^{t_{k}}U^{(k)}(t_{k}-t_{k+1})V_{N}^{(k)}\gamma
_{N}^{(k)}(t_{k+1})dt_{k+1}  \label{equation:short duhamel of BBGKY} \\
&&+\frac{N-k}{N}\int_{0}^{t_{k}}U^{(k)}(t_{k}-t_{k+1})B_{N}^{(k+1)}\gamma
_{N}^{(k+1)}(t_{k+1})dt_{k+1}  \notag
\end{eqnarray}%
with the short-hand notation: 
\begin{eqnarray*}
U^{(k)} &=&e^{it\triangle _{\mathbf{x}_{k}}}e^{-it\triangle _{\mathbf{x}%
_{k}^{\prime }}}, \\
V_{N}^{(k)}\gamma _{N}^{(k)} &=&\frac{1}{N}\sum_{1\leqslant i<j\leqslant k}%
\left[ V_{N}(x_{i}-x_{j}),\gamma _{N}^{(k)}\right] \\
B_{N}^{(k+1)}\gamma _{N}^{(k+1)} &=&\sum_{j=1}^{k}B_{N,j,k+1}\gamma
_{N}^{(k+1)}.
\end{eqnarray*}%
We omit the $i$ in front of the potential term and the interaction term so
that we do not need to keep track of its exact power.

Writing out the $l_{c}$th Duhamel-Born series of $\gamma _{N}^{(k)}$ by
iterating hierarchy \eqref{equation:short duhamel of BBGKY} $l_{c}$ times%
\footnote{%
Here, $l_{c}$ stands for "level of coupling" or "length/depth of coupling".
When $l_{c}=0$, we have (\ref{equation:short duhamel of BBGKY}) back.}, we
have 
\begin{eqnarray*}
\gamma _{N}^{(k)}(t_{k})
&=&U^{(k)}(t_{k})\gamma _{N,0}^{(k)} \\
&&+\frac{N-k}{N}\int_{0}^{t_{k}}U^{(k)}(t_{k}-t_{k+1})B_{N}^{(k+1)}U^{(k+1)}(t_{k+1})\gamma
_{N,0}^{(k+1)}dt_{k+1} \\
&&+\int_{0}^{t_{k}}U^{(k)}(t_{k}-t_{k+1})V_{N}^{(k)}\gamma
_{N}^{(k)}(t_{k+1})dt_{k+1} \\
&&+\frac{N-k}{N}\int_{0}^{t_{k}}U^{(k)}(t_{k}-t_{k+1})B_{N}^{(k+1)} \\
&& \qquad \times \int_{0}^{t_{k+1}}U^{(k+1)}(t_{k+1}-t_{k+2}) V_{N}^{(k+1)}\gamma_{N}^{(k+1)}(t_{k+2}) dt_{k+2}dt_{k+1} \\
&&+\frac{N-k}{N}\frac{N-k-1}{N}
\int_{0}^{t_{k}}U^{(k)}(t_{k}-t_{k+1})B_{N}^{(k+1)}%
 \\
&& \qquad \times \int_{0}^{t_{k+1}} U^{(k+1)}(t_{k+1}-t_{k+2}) B_{N}^{(k+2)}\gamma
_{N}^{(k+2)}(t_{k+2})dt_{k+2}dt_{k+1} \\
&=&...
\end{eqnarray*}%
After $l_{c}$ iterations 
\begin{equation}
\gamma _{N}^{(k)}(t_{k})=\QTR{sc}{FP}^{(k,l_{c})}(t_{k})+\QTR{sc}{PP}%
^{(k,l_{c})}(t_{k})+\QTR{sc}{IP}^{(k,l_{c})}(t_{k})
\label{E:k-level-expansion}
\end{equation}%
where the \emph{free part} at coupling level $l_{c}$ is given by 
\begin{eqnarray*}
&&\QTR{sc}{FP}^{(k,l_{c})} \\
&=&U^{(k)}(t_{k})\gamma _{N,0}^{(k)}+ \\
&&\sum_{j=1}^{l_{c}}\left( \dprod_{l=0}^{j-1}\frac{N-k-l}{N}\right)
\int_{0}^{t_{k}}\cdots
\int_{0}^{t_{k+j-1}}U^{(k)}(t_{k}-t_{k+1})B_{N}^{(k+1)}\cdots \\
&&\times U^{(k+j-1)}(t_{k+j-1}-t_{k+j})B_{N}^{(k+j)}\left(
U^{(k+j)}(t_{k+j})\gamma _{N,0}^{(k+j)}\right) dt_{k+1}\cdots dt_{k+j},
\end{eqnarray*}%
the \emph{potential part} is given by 
\begin{eqnarray*}
&& \QTR{sc}{PP}^{(k,l_{c})} \\
&=&\int_{0}^{t_{k}}U^{(k)}(t_{k}-t_{k+1})V_{N}^{(k)}\gamma
_{N}^{(k)}(t_{k+1})dt_{k+1}+\sum_{j=1}^{l_{c}}\left( \dprod_{l=0}^{j-1}\frac{%
N-k-l}{N}\right)  \notag  \label{Term:PotentialPart} \\
&&\times \int_{0}^{t_{k}}\cdots
\int_{0}^{t_{k+j-1}}U^{(k)}(t_{k}-t_{k+1})B_{N}^{(k+1)}\cdots
U^{(k+j-1)}(t_{k+j-1}-t_{k+j})B_{N}^{(k+j)}  \notag \\
&&\times \left(
\int_{0}^{t_{k+j}}U^{(k+j)}(t_{k+j}-t_{k+j+1})V_{N}^{(k+j)}\gamma
_{N}^{(k+j)}(t_{k+j+1})dt_{k+j+1}\right) dt_{k+1}\cdots dt_{k+j},  \notag
\end{eqnarray*}
and the \emph{interaction part} is given by 
\begin{eqnarray*}
&&\QTR{sc}{IP}^{(k,l_{c})} \\
&=&\left( \dprod_{l=0}^{l_{c}}\frac{N-k-l}{N}%
\right) \int_{0}^{t_{k}}\cdots
\int_{0}^{t_{k+l_{c}}}U^{(k)}(t_{k}-t_{k+1})B_{N}^{(k+1)}\cdots  \\
&&\qquad \cdots U^{(k+l_{c})}(t_{k+l_{c}}-t_{k+l_{c}+1})
B_{N}^{(k+l_{c}+1)}\left( \gamma
_{N}^{(k+l_{c}+1)}(t_{k+l_{c}+1})\right) dt_{k+1}\cdots dt_{k+l_{c}+1}.
\end{eqnarray*}%
By \eqref{E:k-level-expansion}, to establish (\ref{estimate:main target}),
it suffices to prove that 
\begin{equation}
\left\Vert P_{\leq M}^{(k-1)}R^{(k-1)}B_{N,1,k}\QTR{sc}{FP}%
^{(k,l_{c})}\right\Vert _{L_{T}^{1}L_{\mathbf{x,x}^{\prime }}^{2}}\leqslant
C^{k-1}  \label{estimate:FreePartofBBGKY}
\end{equation}%
\begin{equation}
\left\Vert P_{\leq M}^{(k-1)}R^{(k-1)}B_{N,1,k}\QTR{sc}{PP}%
^{(k,l_{c})}\right\Vert _{L_{T}^{1}L_{\mathbf{x,x}^{\prime }}^{2}}\leqslant
C^{k-1}  \label{estimate:PotentialPartofBBGKY}
\end{equation}%
\begin{equation}
\left\Vert P_{\leq M}^{(k-1)}R^{(k-1)}B_{N,1,k}\QTR{sc}{IP}%
^{(k,l_{c})}\right\Vert _{L_{T}^{1}L_{\mathbf{x,x}^{\prime }}^{2}}\leqslant
C^{k-1}  \label{estimate:InteractionPartofBBGKY}
\end{equation}%
for all $k\geqslant 2$ and for some $C$ and a sufficiently small $T$
determined by the controlling constant in condition 
\eqref{condition:energy
condition} and independent of $k$, $N$ and $M.$ We observe that $B_{N}^{(j)}$
has $2j$ terms inside so that each summand of $\gamma _{N}^{(k)}(t_{k})$
contains factorially many terms $\left( \sim \frac{\left( k+l_{c}\right) !}{%
k!}\right) $. We use the Klainerman-Machedon board game to combine them and
hence reduce the number of terms that need to be treated. Define 
\begin{equation*}
J_{N}^{(k,j)}(\underline{t}%
_{k+j})(f^{(k+j)})=U^{(k)}(t_{k}-t_{k+1})B_{N}^{(k+1)}\cdots
U^{(k+j-1)}(t_{k+j-1}-t_{k+j})B_{N}^{(k+j)}f^{(k+j)},
\end{equation*}%
where $\underline{t}_{k+j}$ means $\left( t_{k+1},\ldots ,t_{k+j}\right) ,$
then the Klainerman-Machedon board game implies the lemma.

\begin{lemma}[Klainerman-Machedon board game]
\label{lemma:Klainerman-MachedonBoardGameForBBGKY}One can express 
\begin{equation*}
\int_{0}^{t_{k}}\cdots \int_{0}^{t_{k+j-1}}J_{N}^{(k,j)}(\underline{t}%
_{k+j})(f^{(k+j)})d\underline{t}_{k+j}
\end{equation*}%
as a sum of at most $4^{j-1}$ terms of the form 
\begin{equation*}
\int_{D}J_{N}^{(k,j)}(\underline{t}_{k+j},\mu _{m})(f^{(k+j)})d\underline{t}%
_{k+j},
\end{equation*}%
or in other words, 
\begin{equation*}
\int_{0}^{t_{k}}\cdots \int_{0}^{t_{k+j-1}}J_{N}^{(k,j)}(\underline{t}%
_{k+j})(f^{(k+j)})d\underline{t}_{k+j}=\sum_{m}\int_{D}J_{N}^{(k,j)}(%
\underline{t}_{k+j},\mu _{m})(f^{(k+j)})d\underline{t}_{k+j}.
\end{equation*}%
Here $D\subset \lbrack 0,t_{k}]^{j}$, $\mu _{m}$ are a set of maps from $%
\{k+1,\ldots ,k+j\}$ to $\{k,\ldots ,k+j-1\}$ satisfying $\mu _{m}(k+1)=k$
and $\mu _{m}(l)<l$ for all $l,$ and 
\begin{eqnarray*}
&&J_{N}^{(k,j)}(\underline{t}_{k+j},\mu _{m})(f^{(k+j)}) \\
&=&U^{(k)}(t_{k}-t_{k+1})B_{N,k,k+1}U^{(k+1)}(t_{k+1}-t_{k+2})B_{N,\mu
_{m}(k+2),k+2}\cdots \\
&&\cdots U^{(k+j-1)}(t_{k+j-1}-t_{k+j})B_{N,\mu _{m}(k+j),k+j}(f^{(k+j)}).
\end{eqnarray*}
\end{lemma}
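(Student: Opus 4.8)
The plan is to run the combinatorial ``board game'' of Klainerman and Machedon \cite{KlainermanAndMachedon}, in the form adapted to the BBGKY hierarchy by X.C. \cite{Chen3DDerivation}; the only structural input from the present setting is the permutation symmetry \eqref{condition:symmetric}, which we also impose on $f^{(j+1)}$. First I would expand $B_N^{(l)}=\sum_{i=1}^{l-1}B_{N,i,l}$ in each factor of $J_N(\underline t_{j+1})$, so that the left-hand side becomes
\[
\sum_{\mu}\int_{t_2\geq t_3\geq\cdots\geq t_{j+1}\geq 0}U^{(2)}(t_2-t_3)B_{N,\mu(3),3}U^{(3)}(t_3-t_4)\cdots B_{N,\mu(j+1),j+1}f^{(j+1)}\,d\underline t_{j+1},
\]
the sum running over all maps $\mu\colon\{3,\dots,j+1\}\to\mathbb N$ with $1\le\mu(l)\le l-1$. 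Since there are $\prod_{l=3}^{j+1}(l-1)=j!$ such terms, each integrated over the simplex $\{t_2\ge t_3\ge\cdots\ge t_{j+1}\ge 0\}$, the content of the lemma is a factorial-to-exponential reduction in the number of summands.

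Next I would introduce on the set of admissible $\mu$ the two acceptable moves of the board game: (a) a relabeling of the internal integration variables $x_3,\dots,x_{j+1}$ together with their primed copies, legitimate because $\gamma_N^{(k)}$ and $f^{(j+1)}$ are symmetric by \eqref{condition:symmetric}; and (b) an exchange of two consecutive integration times, replacing the constraint $t_l\ge t_{l+1}$ by $t_{l+1}\ge t_l$ inside the iterated integral, which is legitimate by Fubini whenever $\mu(l+1)\neq l$ -- in that case $B_{N,\mu(l),l}$ and $B_{N,\mu(l+1),l+1}$ act on disjoint or harmlessly overlapping particle labels, so that, after an accompanying relabeling of type (a) and use of the tensor structure of the free evolutions $U^{(k)}$, exchanging their order together with the order of the corresponding time integrations leaves the term unchanged. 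Moves (a) and (b) carry one admissible pair (map, simplex) to another and thereby generate an equivalence relation on the admissible $\mu$.

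Following \cite{KlainermanAndMachedon}, I would then show that every equivalence class contains a unique representative $\mu_m$ in ``upper echelon'' normal form; tracing through the normalization, this form satisfies $\mu_m(3)=2$ and $\mu_m(l)<l$ for all $l$, and takes values in $\{2,\dots,j\}$. The number of upper-echelon maps on the $j-1$ time variables $t_3,\dots,t_{j+1}$ is then bounded, via the standard encoding as a ballot-type lattice path and the Catalan estimate $\binom{2n}{n}\le 4^{n}$ with $n=j-1$, by $4^{j-1}$. Finally, for a fixed class with representative $\mu_m$, each move of type (b) used to bring a member $\mu$ to $\mu_m$ acts on the time variables by a transposition; composing these gives a permutation $\pi_\mu$ with
\[
\int_{\mathrm{simplex}(\mu)}J_N(\underline t_{j+1},\mu)(f^{(j+1)})\,d\underline t_{j+1}=\int_{\pi_\mu(\mathrm{simplex}(\mu))}J_N(\underline t_{j+1},\mu_m)(f^{(j+1)})\,d\underline t_{j+1}.
\]
One checks, from the structure of the upper-echelon normal form, that as $\mu$ ranges over its class the images $\pi_\mu(\mathrm{simplex}(\mu))$ are pairwise disjoint up to measure zero and their union is a measurable set $D=D_m\subset[0,t_2]^{j-1}$, so that summing over the class produces $\int_{D}J_N(\underline t_{j+1},\mu_m)(f^{(j+1)})\,d\underline t_{j+1}$; summing over the at most $4^{j-1}$ classes yields the lemma.

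The main obstacle is this last point: one must verify that the time-permuted simplices attached to a single equivalence class tile a common region $D_m$ up to measure zero, which is precisely where the combinatorial structure of the upper-echelon normal form and the bookkeeping of which exchanges are available at each stage must be made exact. By comparison, the $4^{j-1}$ counting bound is routine, and the commutation identities justifying move (b) follow directly from Fubini's theorem and the tensor structure of $U^{(k)}$.
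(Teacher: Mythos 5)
Your proposal takes essentially the same route as the paper: the paper's proof is a one-paragraph citation to the Klainerman--Machedon board game \cite[Theorem 3.4]{KlainermanAndMachedon}, noting only that the argument carries over verbatim once $B_{j,k+1}$ is replaced by $B_{N,j,k+1}$ and one observes that $B_{N,j,k+1}$ still commutes with $e^{it\triangle_{x_i}}e^{-it\triangle_{x_i'}}$ for $i\neq j$. You have unpacked exactly this argument---expansion into $\sim j!$ summands, relabeling and Fubini-type moves justified by the tensor structure of $U^{(k)}$ and the symmetry of $f^{(j+1)}$, reduction to upper-echelon normal forms, the $4^{j-1}$ count, and the tiling of $D$ by permuted simplices---so the content and the key structural observation (commutation of $B_{N,j,k+1}$ with the irrelevant free propagators) are the same as the paper's.
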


\begin{proof}
Lemma \ref{lemma:Klainerman-MachedonBoardGameForBBGKY} follows the exact
same proof as \cite[Theorem 3.4]{KlainermanAndMachedon}, the
Klainerman-Machedon board game, if one replaces $B_{j,k+1}$ by $B_{N,j,k+1}$
and notices that $B_{N,j,k+1}$ still commutes with $e^{it\triangle
_{x_{i}}}e^{-it\triangle _{x_{i}^{\prime }}}$ whenever $i\neq j$. This
argument reduces the number of terms by combining them.
\end{proof}

In the rest of this paper, we establish estimate (\ref%
{estimate:PotentialPartofBBGKY}) only. The reason is the following. On the
one hand, the proof of estimate (\ref{estimate:PotentialPartofBBGKY}) is
exactly the place that relies on the restriction $\beta \in (0,2/3)$ in this
paper. On the other hand, X.C. has already proven estimates %
\eqref{estimate:FreePartofBBGKY} and \eqref{estimate:InteractionPartofBBGKY}
as estimates (6.3) and (6.5) in \cite{Chen3DDerivation} without using any
frequency localization. For completeness, we include a proof of estimates (%
\ref{estimate:FreePartofBBGKY}) and (\ref{estimate:InteractionPartofBBGKY})
in Appendix \ref{appendix:estimating free and interaction}. Before we delve
into the proof of estimate (\ref{estimate:PotentialPartofBBGKY}), we remark
that the proof of estimates (\ref{estimate:FreePartofBBGKY}) and (\ref%
{estimate:PotentialPartofBBGKY}) is independent of the coupling level $l_{c}$
and we will take the coupling level $l_{c}$ to be $\ln N$ for estimate (\ref%
{estimate:InteractionPartofBBGKY}).\footnote{%
The technique of taking $l_{c}=\ln N$ for estimate (\ref%
{estimate:InteractionPartofBBGKY}) was first observed by T.Chen and N.Pavlovi%
\'{c} \cite{TChenAndNPSpace-Time}.}

\section{Estimate of the potential part}

\label{Sec:estimating potential part}

In this section, we prove estimate (\ref{estimate:PotentialPartofBBGKY}). To
be specific, we establish the following theorem.

\begin{theorem}
\label{THM:beta up to 2/3}Under the assumptions of Theorem \ref{THM:MainTHM}%
, there exists a $C$ (independent of $k,l_{c},M_{k-1},N$) such that for each 
$M_{k-1}\geqslant 1$ there exists $N_{0}$ (depending on $M_{k-1}$) such that
for $N\geqslant N_{0}$, there holds 
\begin{equation*}
\left\Vert P_{\leqslant M_{k-1}}^{(k-1)}R^{(k-1)}B_{N,1,k}\QTR{sc}{PP}%
^{(k,l_{c})}\right\Vert _{L_{T}^{1}L_{\mathbf{x,x}^{\prime }}^{2}}\leqslant
C^{k-1}
\end{equation*}%
where $\QTR{sc}{PP}^{(k,l_{c})}$ is given by (\ref{Term:PotentialPart}).
\end{theorem}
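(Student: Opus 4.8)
The plan is to expand $\QTR{sc}{PP}^{(k)}$ into its defining sum over coupling levels $j = 2, \ldots, k$, apply the Klainerman-Machedon board game (Lemma \ref{lemma:Klainerman-MachedonBoardGameForBBGKY}) to each summand to reduce the $\sim j!$ terms coming from the $2j$-fold expansion of the $B_N^{(l)}$'s down to at most $4^{j-1}$ terms indexed by admissible maps $\mu_m$, and then estimate a single generic term
\begin{equation*}
P_{\leq M_1}^{(1)} R^{(1)} B_{N,1,2} \int_D U^{(2)}(t_2 - t_3) B_{N,2,3} \cdots U^{(j)}(t_j - t_{j+1}) B_{N,\mu_m(j+1),j+1}\left(\int_0^{t_{j+1}} U^{(j+1)}(t_{j+1} - t_{j+2}) V_N^{(j+1)} \gamma_N^{(j+1)}(t_{j+2})\, dt_{j+2}\right) d\underline{t}_{j+1}
\end{equation*}
in $L^1_T L^2_{\mathbf{x},\mathbf{x}'}$. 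The key point is to insert Littlewood-Paley decompositions $\sum_{M_2, \ldots, M_{j+1}} P^{(i)}_{\sim M_i}$ between consecutive factors, exploiting the observation flagged in the introduction that $P^{(k)}_{\leq M_k} B_{N,j,k+1} P^{(k+1)}_{\sim M_{k+1}}$ has a gain when $M_{k+1} \gg M_k$ — this is the content of the frequency localized collapsing estimates from \S \ref{S:Strichartz-X}, applied at each vertex of the chain.

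The steps, in order, are: (1) reduce to a single board-game term as above; (2) peel off the innermost potential vertex $V_N^{(j+1)}$ — here one trades the $N^{3\beta}V(N^\beta\,\cdot\,)$ factor against derivatives, picking up a power of $N^{\beta \cdot (\text{something})}$ that must be beaten by the frequency-truncation gains, and this is where the hypothesis $V \in L^1 \cap W^{2,\frac65+}$ and the endpoint Strichartz estimate phrased via the $X_b$ norm enter; (3) apply the frequency localized Klainerman-Machedon collapsing estimate at each of the $j-1$ collapsing vertices $B_{N,\mu_m(l),l}$, summing the resulting geometric-type series in the dyadic parameters $M_2 \leq \cdots \leq M_{j+1}$ (the summability is exactly what forces $\beta < 2/3$); (4) bound the remaining free evolutions and the final density $\gamma_N^{(j+1)}(t_{j+2})$ using the energy condition \eqref{condition:energy condition}, which supplies the $C^{j}$ needed to absorb the $4^{j-1}$ board-game multiplicity and close the sum over $j$ for $T$ small depending only on the energy constant; (5) track the $N$-dependence so that, for each fixed $M_1$, all error terms with positive powers of (negative powers of) $N$ are absorbed once $N \geq N_0(M_1)$, and the $\frac{N+1-l}{N}$ prefactors are harmless.

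The main obstacle is step (3) combined with the bookkeeping in step (2): one must arrange the frequency localizations so that at the potential vertex the loss $N^{c\beta}$ (from $V_N$ concentrating at scale $N^{-\beta}$) is dominated by a gain of the form $(M_{j+1}/N^\beta)^{\theta}$ or $M_1^{a} M_{j+1}^{-b}$ coming from the collapsing estimates and the truncation $P_{\leq M_1}^{(1)}$, and simultaneously so that the dyadic sums $\sum_{M_2 \leq \cdots \leq M_{j+1}}$ of the per-vertex gains telescope to something bounded by a constant to the power $j$. Getting the exponents to line up is precisely what restricts $\beta$ to $(0, 2/3)$; the Strichartz estimate is used in the endpoint form to squeeze out the extra room beyond the $\beta < 2/5$ that the Sobolev-only argument of \cite{Chen3DDerivation} (reproduced in Theorem \ref{Theorem:example:beta up to 2/5}) would give. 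Everything else — the board game, the energy bound, the weak-$*$ passage — is either quoted or routine.
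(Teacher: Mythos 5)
Your plan matches the paper's proof (Steps I--IV of \S\ref{Sec:proof of potential estimate for beta up to 2/3}): board game reduction, iterated frequency-localized Klainerman-Machedon collapsing estimates (Lemma \ref{Lemma:LocalizedKM} and its $X_b$ version), passage to $X_{-\frac12+}$ via Lemma \ref{Lemma:b to b-1}, and the endpoint-Strichartz corollary \eqref{E:StrCor3} at the innermost potential vertex, closed with the energy condition and $N \geq N_0(M_1)$. One small refinement: the dyadic sums $\sum_{M_2 \leq \cdots \leq M_{j-1}}$ are controlled unconditionally by the combinatorial Lemmas \ref{L:iterates3}--\ref{L:iterates4}; the restriction $\beta < 2/3$ is forced only in the last step, where the potential-vertex bound and the sum over the final scale $M_j$ (split at $N^\beta$) produce the factor $M_1^{1-2\epsilon} N^{\frac{3}{2}\beta - 1 + 2\epsilon}$, which requires $\frac{3}{2}\beta - 1 < 0$.
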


In this section, we will employ the estimates stated and proved in Section %
\ref{S:Strichartz-X}. Due to the technicality of the proof of Theorem \ref%
{THM:beta up to 2/3} involving Littlewood-Paley theory, we prove a simpler $%
\beta \in (0, \frac{2}{5})$ version first to illustrate the basic steps in
establishing Theorem \ref{THM:beta up to 2/3}. We then prove Theorem \ref%
{THM:beta up to 2/3} in Section \ref{Sec:proof of potential estimate for
beta up to 2/3}.

\subsection{A simpler proof in the case $\protect\beta\in(0,\frac25)$}

\begin{theorem}
\label{Theorem:example:beta up to 2/5} For $\beta \in (0,\frac{2}{5}),$ we
have the estimate 
\begin{equation*}
\left\Vert R^{(k-1)}B_{N,1,k}\QTR{sc}{PP}^{(k,l_{c})}\right\Vert
_{L_{T}^{1}L_{\mathbf{x,x}^{\prime }}^{2}}\leqslant C^{k-1}
\end{equation*}%
for some $C$ and a sufficiently small $T$ determined by the controlling
constant in condition \eqref{condition:energy condition} and independent of $%
k,l_{c}$ and $N.$
\end{theorem}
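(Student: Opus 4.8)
The plan is to estimate each summand of $\QTR{sc}{PP}^{(k)}$ after applying the Klainerman-Machedon board game (Lemma~\ref{lemma:Klainerman-MachedonBoardGameForBBGKY}), so that the $j$th term is a sum of at most $4^{j-1}$ pieces, each of the form $R^{(1)}B_{N,1,2}\int_D J_N(\underline{t}_{j+1},\mu_m)\bigl(\int_0^{t_j} U^{(j)}(t_j - t_{j+1}) V_N^{(j)} \gamma_N^{(j)}(t_{j+1})\, dt_{j+1}\bigr)\, d\underline{t}_{j+1}$. Since the number of combined terms is $\sim 4^j$ and we are aiming for a bound $C^k$ on the whole sum, it suffices to show each piece is bounded by $(CT^{\alpha})^{j}$ for some $\alpha > 0$ (so that for $T$ small the geometric series converges, with the energy condition \eqref{condition:energy condition} controlling the tail factor $\|S^{(j)}\gamma_N^{(j)}\|_{L^2} \le C^j$); a standard bootstrap then upgrades small $T$ to arbitrary finite $T$. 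The first step is to reduce, via the standard argument in \cite{Chen3DDerivation}, the estimation of a $j$-fold iterate to repeated application of a single "collapsing" estimate that trades one factor of $B_{N,\mu(l),l}$ against a power of $T$ and a $\langle\nabla\rangle$ on the incoming density — i.e., to the Klainerman-Machedon collapsing-type estimate proved in \S\ref{S:Strichartz-X}, phrased in the $X_b$ norm.

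The heart of the matter is the genuinely new ingredient: the innermost factor $B_{N,1,2} \QTR{sc}{VP}$ where $\QTR{sc}{VP} = \int_0^{t_j} U^{(j)}(t_j - t_{j+1}) V_N^{(j)} \gamma_N^{(j)}(t_{j+1})\, dt_{j+1}$ carries the singular potential $V_N(x_i - x_{i'}) = N^{3\beta} V(N^\beta \cdot)$, which is $O(N^{3\beta})$ in sup norm and concentrates on a ball of radius $N^{-\beta}$. Here I would use the endpoint Strichartz estimate (from \S\ref{S:Strichartz-X}) rather than the Sobolev embedding used in \cite{Chen3DDerivation}: write the contribution of $V_N^{(j)}\gamma_N^{(j)}$ in terms of its $X_b$ norm, put the singular factor $V_N$ in $L_t^2 L_x^{6/5}$ (or a related dual Strichartz space) using $\|V_N\|_{L^{6/5}} = N^{3\beta - \frac{3}{2}\cdot\frac56}\|V\|_{L^{6/5}} = N^{\frac{3\beta}{2}\cdot(\cdots)}$ — more precisely $\|V_N\|_{L^r} = N^{3\beta(1 - 1/r)}\|V\|_{L^r}$, so the exponent is $3\beta/2$ when $r = 2$ and one wants to choose $r$ to make this loss as small as possible while still having a usable Strichartz pair — and then absorb this $N$-power against the gain coming from the frequency support of the output being effectively $\lesssim N^\beta$ (since $V_N$ lives at frequencies $\lesssim N^\beta$). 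Balancing "$N^{3\beta(1-1/r)}$ loss" against "$N^{\beta \cdot (\text{derivative count})}$ gain from frequency truncation of the $R^{(1)}$ and the collapsing estimates" is exactly what forces the threshold; with the endpoint Strichartz estimate the arithmetic closes for $\beta < \tfrac{2}{5}$, whereas Sobolev alone gave only $\beta \le \tfrac{2}{7}$.

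The main obstacle, and the step I expect to require the most care, is this $N$-power bookkeeping in the potential term: one must verify that the total power of $N$ accumulated from $\|V_N\|$ (from the potential), from the kinetic weights $R^{(1)}$ and the $\langle\nabla\rangle$'s introduced by each collapsing estimate, and from the $X_b$-norm trade-offs, is $\le 0$ for all $\beta < \tfrac{2}{5}$, uniformly in the coupling level $j$ and in $k$ (since $k$ is eventually taken $= \ln N$ for the interaction part, though for the potential part $k$ stays finite). A secondary technical point is that the Strichartz/collapsing estimates of \S\ref{S:Strichartz-X} are stated on $\mathbb{R}$ in the $X_b$ norm, so one needs the usual time-restriction/extension lemmas to pass between $L_T^1 L^2$ and $X_{0,b}$ with the $T^\alpha$ gain, and to handle the $b < \tfrac12$ vs.\ $b > \tfrac12$ endpoint subtleties; these are routine but must be done consistently. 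Once the $\beta < \tfrac25$ bound is in hand, summing the $4^j$ terms and invoking \eqref{condition:energy condition} finishes the proof; the refinement to $\beta < \tfrac23$ in Theorem~\ref{THM:beta up to 2/3} then comes from inserting the Littlewood-Paley projections $P^{(j)}_{\sim M_j}$ with $M_{j+1} \gg M_j$ to squeeze extra decay out of the frequency-localized collapsing estimates, which is deferred to \S\ref{Sec:proof of potential estimate for beta up to 2/3}.
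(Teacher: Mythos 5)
Your high-level architecture is the same as the paper's: Klainerman--Machedon board game on $\QTR{sc}{PP}^{(k)}$, iterated collapsing estimates to strip off the $B_N$ factors (each producing a $T^{1/2}$), a time cut-off bridging $L^1_T L^2$ and $X_b$, and the endpoint Strichartz estimate (Lemma~\ref{Lemma:TheStrichartzEstimate}) to absorb $V_N$. However, the step you single out as the heart of the matter --- the $N$-power bookkeeping --- is described by a mechanism that is not the one in the paper and, as written, would not produce the threshold $\beta<2/5$.

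You propose to balance the loss $\|V_N\|_{L^r}\sim N^{3\beta(1-1/r)}$ against a ``gain coming from the frequency support of the output being effectively $\lesssim N^\beta$.'' No Littlewood--Paley projection or frequency-support argument is used in the $\beta<2/5$ case; that device is reserved for the $\beta<2/3$ refinement, and introducing it here as the balancing gain would be circular. More to the point, if one simply minimizes the $\|V_N\|_{L^r}$ loss over the admissible Strichartz exponents $r\in\{3+,2+,\tfrac65+\}$ one gets $N^{\beta/2-1}$, which is benign for all $\beta\le 2$; so minimizing this loss alone cannot be where $2/5$ comes from. The actual mechanism is the Leibniz rule applied to the kinetic weight $R^{(j)}$ acting on $V_N^{(j)}\gamma_N^{(j)}$: the two factors $|\nabla_{x_i}|$, $|\nabla_{x_j}|$ of $R^{(j)}$ that see $V_N(x_i-x_j)$ must be split between $V_N$ and $\gamma_N^{(j)}$, giving three terms proportional to $V_N$, $V_N'$, $V_N''$. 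Lemma~\ref{Lemma:TheStrichartzEstimate} is then applied with Lebesgue exponent $L^{3+}$, $L^{2+}$, $L^{\frac65+}$ according to whether zero, one, or two derivatives hit $V_N$, and the corresponding $N$-powers (including the overall $1/N$ from $V_N^{(j)}$) are
\[
\Bigl\|\tfrac{V_N}{N}\Bigr\|_{L^{3+}}\sim N^{2\beta-1},\qquad
\Bigl\|\tfrac{V_N'}{N}\Bigr\|_{L^{2+}}\sim N^{\frac{5\beta}{2}-1},\qquad
\Bigl\|\tfrac{V_N''}{N}\Bigr\|_{L^{\frac65+}}\sim N^{\frac{5\beta}{2}-1}.
\]
The second and third terms are the worst and are nonpositive powers of $N$ exactly when $\beta\le 2/5$. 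Each derivative landing on $V_N$ costs an extra $N^\beta$ --- this is a \emph{loss}, not a gain, and the compensation is that fewer derivatives must be traded to $\gamma$, which allows a smaller Lebesgue exponent and hence less $N^{3\beta(1-1/r)}$ growth. You correctly identify the endpoint Strichartz estimate as the new tool and you correctly state the resulting threshold, but to actually close the argument you need to carry out this Leibniz-rule decomposition of $R^{(j)}(V_N^{(j)}\gamma_N^{(j)})$ and track all three cases, rather than invoking frequency localization.
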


\begin{proof}
The proof is divided into four steps. We will reproduce every step for
Theorem \ref{THM:beta up to 2/3} in Section \ref{Sec:proof of potential
estimate for beta up to 2/3}.

\medskip

\noindent \emph{Step I}. By Lemma \ref%
{lemma:Klainerman-MachedonBoardGameForBBGKY}, we know that 
\begin{eqnarray}
&&\text{\textsc{PP}}^{(k,l_{c})}  \label{Term:PotentialTermAfterBoardGame} \\
&=&\int_{0}^{t_{k}}U^{(k)}(t_{k}-t_{k+1})V_{N}^{(k)}\gamma
_{N}^{(k)}(t_{k+1})dt_{k+1}  \notag \\
&&+\sum_{j=1}^{l_{c}}\left( \dprod_{l=0}^{j-1}\frac{N-k-l}{N}\right) \left(
\sum_{m}\int_{D}J_{N}^{(k,j)}(\underline{t}_{k+j},\mu _{m})\left(
f^{(k+j)}\right) d\underline{t}_{k+j}\right)  \notag
\end{eqnarray}%
where%
\begin{equation}
f^{(k+j)}=\int_{0}^{t_{k+j}}U^{(k+j)}(t_{k+j}-t_{k+j+1})V_{N}^{(k+j)}\gamma
_{N}^{(k+j)}(t_{k+j+1})dt_{k+j+1},  \label{def:f for potential term}
\end{equation}%
and the sum $\sum_{m}$ has at most $4^{j-1}$ terms inside.

For the second term in (\ref{Term:PotentialTermAfterBoardGame}), we iterate
Lemma \ref{Lemma:KMOriginalEstimateWithVN} to prove the following estimate%
\footnote{%
This also helps in proving estimates (\ref{estimate:FreePartofBBGKY}) and (%
\ref{estimate:InteractionPartofBBGKY})--see Appendix \ref%
{appendix:estimating free and interaction}}:%
\begin{eqnarray}
&&\left\Vert R^{(k-1)}B_{N,1,k}\int_{D}J_{N}^{(k,j)}(\underline{t}_{k+j},\mu
_{m})\left( f^{(k+j)}\right) d\underline{t}_{k+j}\right\Vert _{L_{T}^{1}L_{%
\mathbf{x,x}^{\prime }}^{2}}  \label{Relation:IteratngCollapsingEstimate} \\
&\leqslant &(CT^{\frac{1}{2}})^{j}\left\Vert R^{(k+j-1)}B_{N,\mu
_{m}(k+j),k+j}f^{(k+j)}\right\Vert _{L_{T}^{1}L_{\mathbf{x,x}^{\prime
}}^{2}}.  \notag
\end{eqnarray}%
In fact,%
\begin{eqnarray*}
&&\left\Vert R^{(k-1)}B_{N,1,k}\int_{D}J_{N}^{(k,j)}(\underline{t}_{k+j},\mu
_{m})\left( f^{(k+j)}\right) d\underline{t}_{k+j}\right\Vert _{L_{T}^{1}L_{%
\mathbf{x,x}^{\prime }}^{2}} \\
&=&\int_{0}^{T}\left\Vert
\int_{D}R^{(k-1)}B_{N,1,k}U^{(k)}(t_{k}-t_{k+1})B_{N,k,k+1}\cdots
dt_{k+1}\ldots dt_{k+j}\right\Vert _{L_{\mathbf{x,x}^{\prime }}^{2}}dt_{k}.
\end{eqnarray*}%
By Minkowski,%
\begin{equation*}
\leqslant \int_{\left[ 0,T\right] ^{j+1}}\left\Vert
R^{(k-1)}B_{N,1,k}U^{(k)}(t_{k}-t_{k+1})B_{N,k,k+1}\cdots \right\Vert _{L_{%
\mathbf{x,x}^{\prime }}^{2}}dt_{k}dt_{k+1}...dt_{k+j}.
\end{equation*}%
Cauchy-Schwarz in $dt_{k}$,%
\begin{equation*}
\leqslant T^{\frac{1}{2}}\int_{\left[ 0,T\right] ^{j}}\left( \int \left\Vert
R^{(k-1)}B_{N,1,k}U^{(k)}(t_{k}-t_{k+1})B_{N,k,k+1}...\right\Vert _{L_{%
\mathbf{x,x}^{\prime }}^{2}}^{2}dt_{k}\right) ^{\frac{1}{2}%
}dt_{k+1}...dt_{k+j}.
\end{equation*}%
Use Lemma $\ref{Lemma:KMOriginalEstimateWithVN},$%
\begin{equation*}
\leqslant CT^{\frac{1}{2}}\int_{\left[ 0,T\right] ^{j}}\left\Vert
R^{(k)}B_{N,k,k+1}U^{(k+1)}(t_{k+1}-t_{k+2})...\right\Vert _{L_{\mathbf{x,x}%
^{\prime }}^{2}}dt_{k+1}...dt_{k+j}.
\end{equation*}%
Repeat the previous steps for $j-1$ time, we then reach relation (\ref%
{Relation:IteratngCollapsingEstimate}).

Applying relation (\ref{Relation:IteratngCollapsingEstimate}) to (\ref%
{Term:PotentialTermAfterBoardGame}), we have 
\begin{eqnarray*}
&&\left\Vert R^{(k-1)}B_{N,1,k}\QTR{sc}{PP}^{(k,l_{c})}\right\Vert
_{L_{T}^{1}L_{\mathbf{x,x}^{\prime }}^{2}} \\
&\leqslant &\left\Vert
R^{(k-1)}B_{N,1,k}\int_{0}^{t_{k}}U^{(k)}(t_{k}-t_{k+1})V_{N}^{(k)}\gamma
_{N}^{(k)}(t_{k+1})dt_{k+1}\right\Vert _{L_{T}^{1}L_{\mathbf{x,x}^{\prime
}}^{2}} \\
&&+\sum_{j=1}^{l_{c}}4^{j-1}(CT^{\frac{1}{2}})^{j}\left\Vert
R^{(k+j-1)}B_{N,\mu _{m}(k+j),k+j}\left( f^{(k+j)}\right) \right\Vert
_{L_{T}^{1}L_{\mathbf{x,x}^{\prime }}^{2}} \\
&\leqslant &\left\Vert
R^{(k-1)}B_{N,1,k}\int_{0}^{t_{k}}U^{(k)}(t_{k}-t_{k+1})V_{N}^{(k)}\gamma
_{N}^{(k)}(t_{k+1})dt_{k+1}\right\Vert _{L_{T}^{1}L_{\mathbf{x,x}^{\prime
}}^{2}} \\
&&+\sum_{j=1}^{l_{c}}(CT^{\frac{1}{2}})^{j}\left\Vert R^{(k+j-1)}B_{N,\mu
_{m}(k+j),k+j}\left( f^{(k+j)}\right) \right\Vert _{L_{T}^{1}L_{\mathbf{x,x}%
^{\prime }}^{2}}.
\end{eqnarray*}%
Inserting a smooth cut-off $\theta (t)$ with $\theta (t)=1$ for $t\in \left[
-T,T\right] $ and $\theta (t)=0$ for $t\in \left[ -2T,2T\right] ^{c}$ into
the above estimate, we get 
\begin{eqnarray*}
&&\left\Vert R^{(k-1)}B_{N,1,k}\QTR{sc}{PP}^{(k,l_{c})}\right\Vert
_{L_{T}^{1}L_{\mathbf{x,x}^{\prime }}^{2}} \\
&\leqslant &\left\Vert R^{(k-1)}B_{N,1,k}\theta
(t_{k})\int_{0}^{t_{k}}U^{(k)}(t_{k}-t_{k+1})\theta
(t_{k+1})V_{N}^{(k)}\gamma _{N}^{(k)}(t_{k+1})dt_{k+1}\right\Vert
_{L_{T}^{1}L_{\mathbf{x,x}^{\prime }}^{2}} \\
&&+\sum_{j=1}^{l_{c}}(CT^{\frac{1}{2}})^{j}\left\Vert R^{(k+j-1)}B_{N,\mu
_{m}(k+j),k+j}\theta (t_{k+j})\left( \tilde{f}^{(k+j)}\right) \right\Vert
_{L_{T}^{1}L_{\mathbf{x,x}^{\prime }}^{2}}
\end{eqnarray*}%
where%
\begin{equation}
\tilde{f}^{(k+j)}=\int_{0}^{t_{k+j}}U^{(k+j)}(t_{k+j}-t_{k+j+1})\left(
\theta (t_{k+j+1})V_{N}^{(k+j)}\gamma _{N}^{(k+j)}(t_{k+j+1})\right)
dt_{k+j+1}  \label{def:f tutle for potential term}
\end{equation}

\noindent \emph{Step II}. The $X_{b}$ space version of Lemma \ref%
{Lemma:KMOriginalEstimateWithVN}, Lemma \ref{Lemma:KMEstimateInWithX_b},
then turns the last step into 
\begin{eqnarray*}
&&\left\Vert R^{(k-1)}B_{N,1,k}\QTR{sc}{PP}^{(k,l_{c})}\right\Vert
_{L_{T}^{1}L_{\mathbf{x,x}^{\prime }}^{2}} \\
&\leqslant &CT^{\frac{1}{2}}\Vert \theta
(t_{k})\int_{0}^{t_{k}}U^{(k)}(t_{k}-t_{k+1})R^{(k)}\left( \theta
(t_{k+1})V_{N}^{(k)}\gamma _{N}^{(k)}(t_{k+1})\right) dt_{k+1}\Vert _{X_{%
\frac{1}{2}+}^{(k)}} \\
&&+C\sum_{j=1}^{l_{c}}(CT^{\frac{1}{2}})^{j+1}\Vert \theta (t_{k+j})R^{(k+j)}%
\tilde{f}^{(k+j)}\Vert _{X_{\frac{1}{2}+}^{(k+j)}}
\end{eqnarray*}

\noindent \emph{Step III}. Recall the definition of $\tilde{f}^{(k+j)},$ 
\begin{equation*}
\tilde{f}^{(k+j)}=\int_{0}^{t_{k+j}}U^{(k+j)}(t_{k+j}-t_{k+j+1})\left(
\theta (t_{k+j+1})V_{N}^{(k+j)}\gamma _{N}^{(k+j)}(t_{k+j+1})\right)
dt_{k+j+1}
\end{equation*}%
so%
\begin{eqnarray*}
&&R^{(k+j)}\tilde{f}^{(k+j)} \\
&=&\int_{0}^{t_{k+j}}U^{(k+j)}(t_{k+j}-t_{k+j+1})R^{(k+j)}\left( \theta
(t_{k+j+1})V_{N}^{(k+j)}\gamma _{N}^{(k+j)}(t_{k+j+1})\right) dt_{k+j+1}.
\end{eqnarray*}%
We then proceed with Lemma \ref{Lemma:b to b-1} to get 
\begin{eqnarray*}
&&\left\Vert R^{(k-1)}B_{N,1,k}\QTR{sc}{PP}^{(k,l_{c})}\right\Vert
_{L_{T}^{1}L_{\mathbf{x,x}^{\prime }}^{2}} \\
&\leqslant &CT^{\frac{1}{2}}\Vert R^{(k)}\left( \theta
(t_{k+1})V_{N}^{(k)}\gamma _{N}^{(k)}(t_{k+1})\right) \Vert _{X_{-\frac{1}{2}%
+}^{(k)}} \\
&&+C\sum_{j=1}^{l_{c}}(CT^{\frac{1}{2}})^{j+1}\Vert R^{(k+j)}\left( \theta
(t_{k+j+1})V_{N}^{(k+j)}\gamma _{N}^{(k+j)}(t_{k+j+1})\right) \Vert _{X_{-%
\frac{1}{2}+}^{(k+j)}}.
\end{eqnarray*}

\noindent \emph{Step IV}. Now we would like to utilize Lemma \ref%
{Lemma:TheStrichartzEstimate}. We first analyse a typical term to
demonstrated the effect of Lemma \ref{Lemma:TheStrichartzEstimate}. To be
specific, we have 
\begin{eqnarray*}
&&\Vert R^{(k)}\left( \theta (t_{k+1})V_{N}(x_{1}-x_{2})\gamma
_{N}^{(k)}(t_{k+1})\right) \Vert _{X_{-\frac{1}{2}+}^{(k)}} \\
&\leqslant &\frac{C}{N}\Vert V_{N}(x_{1}-x_{2})\theta (t_{k+1})R^{(k)}\gamma
_{N}^{(k)}(t_{k+1})\Vert _{X_{-\frac{1}{2}+}^{(k)}} \\
&&+\frac{C}{N}\Vert \left( V_{N}\right) ^{\prime }(x_{1}-x_{2})\theta
(t_{k+1})\left( \frac{R^{(k)}}{\left\vert \nabla _{x_{1}}\right\vert }%
\right) \gamma _{N}^{(k)}(t_{k+1})\Vert _{X_{-\frac{1}{2}+}^{(k)}} \\
&&+\frac{C}{N}\Vert \left( V_{N}\right) ^{\prime \prime }(x_{1}-x_{2})\theta
(t_{k+1})\left( \frac{R^{(k)}}{\left\vert \nabla _{x_{1}}\right\vert
\left\vert \nabla _{x_{2}}\right\vert }\right) \gamma
_{N}^{(k)}(t_{k+1})\Vert _{X_{-\frac{1}{2}+}^{(k)}}.
\end{eqnarray*}%
by Leibniz's rule, where%
\begin{equation*}
\frac{R^{(k)}}{\left\vert \nabla _{x_{1}}\right\vert }=\left(
\dprod_{j=2}^{k}\left\vert \nabla _{x_{j}}\right\vert \right) \left(
\dprod_{j=1}^{k}\left\vert \nabla _{x_{j}^{\prime }}\right\vert \right) .
\end{equation*}%
Utilize Lemma \ref{Lemma:TheStrichartzEstimate} to each summand of the
above, we have 
\begin{eqnarray*}
&\leqslant &\frac{C}{N}\left\Vert V_{N}\right\Vert _{L^{3+}}\Vert \theta
(t_{k+1})R^{(k)}\gamma _{N}^{(k)}\Vert _{L_{t_{k+1}}^{2}L_{x,x^{\prime
}}^{2}} \\
&&+\frac{C}{N}\left\Vert V_{N}^{\prime }\right\Vert _{L^{2+}}\Vert \theta
(t_{k+1})\left\langle \nabla _{x_{1}}\right\rangle ^{\frac{1}{2}}\left( 
\frac{R^{(k)}}{\left\vert \nabla _{x_{1}}\right\vert }\right) \gamma
_{N}^{(k)}\Vert _{L_{t_{k+1}}^{2}L_{x,x^{\prime }}^{2}} \\
&&+\frac{C}{N}\left\Vert V_{N}^{\prime \prime }\right\Vert _{L^{\frac{6}{5}%
+}}\Vert \theta (t_{k+1})\left\langle \nabla _{x_{1}}\right\rangle
\left\langle \nabla _{x_{2}}\right\rangle \left( \frac{R^{(k)}}{\left\vert
\nabla _{x_{1}}\right\vert \left\vert \nabla _{x_{2}}\right\vert }\right)
\gamma _{N}^{(k)}\Vert _{L_{t_{k+1}}^{2}L_{x,x^{\prime }}^{2}} \\
&\leqslant &C\Vert S^{(k)}\gamma _{N}^{(k)}\Vert _{L_{2T}^{2}L_{\mathbf{x},%
\mathbf{x}^{\prime }}^{2}},
\end{eqnarray*}%
i.e.%
\begin{equation*}
\Vert R^{(k)}\left( \theta (t_{k+1})V_{N}(x_{1}-x_{2})\gamma
_{N}^{(k)}(t_{k+1})\right) \Vert _{X_{-\frac{1}{2}+}^{(2)}}\leqslant C\Vert
S^{(k)}\gamma _{N}^{(k)}\Vert _{L_{2T}^{2}L_{\mathbf{x},\mathbf{x}^{\prime
}}^{2}},
\end{equation*}%
since $\left\Vert V_{N}/N\right\Vert _{L^{3+}}$ $\left\Vert V_{N}^{\prime
}/N\right\Vert _{L^{2+}}$, and $\left\Vert V_{N}^{\prime \prime
}/N\right\Vert _{L^{\frac{6}{5}+}}$ are uniformly bounded in $N$ for $\beta
\in (0,\frac{2}{5})$. In fact,%
\begin{eqnarray*}
\left\Vert V_{N}/N\right\Vert _{L^{3+}} &\leqslant &N^{2\beta -1}\left\Vert
V\right\Vert _{L^{3+}} \\
\left\Vert V_{N}^{\prime }/N\right\Vert _{L^{2+}} &\leqslant &N^{\frac{%
5\beta }{2}-1}\left\Vert V^{\prime }\right\Vert _{L^{2+}} \\
\left\Vert V_{N}^{\prime \prime }/N\right\Vert _{L^{\frac{6}{5}+}}
&\leqslant &N^{\frac{5\beta }{2}-1}\left\Vert V^{\prime \prime }\right\Vert
_{L^{\frac{6}{5}+}}
\end{eqnarray*}%
where by Sobolev, $V\in W^{2,\frac{6}{5}+}$ implies $V\in L^{\frac{6}{5}%
+}\cap L^{6+}$ and $V^{\prime }\in L^{2+}$.

Using the same idea for all the terms, we end up with 
\begin{eqnarray*}
&&\left\Vert R^{(k-1)}B_{N,1,k}\QTR{sc}{PP}^{(k,l_{c})}\right\Vert
_{L_{T}^{1}L_{\mathbf{x,x}^{\prime }}^{2}} \\
&\leqslant &CTk^{2}\Vert S^{(k)}\gamma _{N}^{(k)}\Vert _{L_{2T}^{\infty }L_{%
\mathbf{x},\mathbf{x}^{\prime }}^{2}}+CT^{\frac{1}{2}}\sum_{j=1}^{l_{c}}(CT^{%
\frac{1}{2}})^{j+1}\left( k+j\right) ^{2}\Vert S^{(k+j)}\gamma
_{N}^{(k+j)}\Vert _{L_{2T}^{\infty }L_{\mathbf{x},\mathbf{x}^{\prime }}^{2}}
\end{eqnarray*}%
because there are $k^{2}$ terms inside $V_{N}^{(k)}$. Plug in Condition (\ref%
{condition:energy condition}), 
\begin{eqnarray*}
&\leqslant &CTk^{2}C_{0}^{k}+CT^{\frac{1}{2}}\sum_{j=1}^{\infty }(CT^{\frac{1%
}{2}})^{j+1}\left( k+j\right) ^{2}C_{0}^{k+j} \\
&\leqslant &C_{0}^{k}\left( CTk^{2}+CT^{\frac{1}{2}}k^{2}\sum_{j=1}^{\infty
}(CT^{\frac{1}{2}})^{j+1}C_{0}^{j}+CT^{\frac{1}{2}}\sum_{j=1}^{\infty }(CT^{%
\frac{1}{2}})^{j+1}j^{2}C_{0}^{j}\right) .\text{ }
\end{eqnarray*}%
We can then choose a $T$ independent of $k,$ $l_{c}$ and $N$ such that the
two infinite series converge. We then have%
\begin{eqnarray*}
\left\Vert R^{(k-1)}B_{N,1,k}\QTR{sc}{PP}^{(k,l_{c})}\right\Vert
_{L_{T}^{1}L_{\mathbf{x,x}^{\prime }}^{2}} &\leqslant &C_{0}^{k}\left(
CTk^{2}+CT^{\frac{1}{2}}k^{2}+CT^{\frac{1}{2}}\right) \\
&\leqslant &C_{0}^{k}\left( CT2^{k}+CT^{\frac{1}{2}}2^{k}+CT^{\frac{1}{2}%
}\right) \\
&\leqslant &C^{k-1}
\end{eqnarray*}%
for some $C$ larger than $C_{0}$ because $k\geqslant 2$. This concludes the
proof of Theorem \ref{Theorem:example:beta up to 2/5}.
\end{proof}

\subsection{The case $\protect\beta \in (0,\frac{2}{3})$}

\label{Sec:proof of potential estimate for beta up to 2/3}

To make formulas shorter, let us write 
\begin{equation*}
R_{\leqslant M_{k}}^{(k)}=P_{\leqslant M_{k}}^{(k)}R^{(k)},
\end{equation*}%
since $P_{\leqslant M_{k}}^{(k)}$ and $R^{(k)}$ are usually bundled together.

\subsubsection{Step I}

By \eqref{Term:PotentialTermAfterBoardGame},%
\begin{eqnarray*}
&&\left\Vert R_{\leqslant M_{k-1}}^{(k-1)}B_{N,1,k}\QTR{sc}{PP}%
^{(k,l_{c})}\right\Vert _{L_{T}^{1}L_{\mathbf{x,x}^{\prime }}^{2}} \\
&\leqslant &\left\Vert R_{\leqslant
M_{k-1}}^{(k-1)}B_{N,1,k}\int_{0}^{t_{k}}U^{(k)}(t_{k}-t_{k+1})V_{N}^{(k)}%
\gamma _{N}^{(k)}(t_{k+1})dt_{k+1}\right\Vert _{L_{T}^{1}L_{\mathbf{x,x}%
^{\prime }}^{2}} \\
&&+\sum_{j=1}^{l_{c}}\sum_{m}\Big\Vert R_{\leqslant
M_{k-1}}^{(k-1)}B_{N,1,k}\int_{D}J_{N}^{(k,j)}(\underline{t}_{k+j},\mu
_{m})\left( f^{(k+j)}\right) d\underline{t}_{k+j}\Big\Vert_{L_{T}^{1}L_{%
\mathbf{x,x}^{\prime }}^{2}}
\end{eqnarray*}%
where $f^{(k+j)}$ is again given by (\ref{def:f for potential term}) and the
sum $\sum_{m}$ has at most $4^{j-1}$ terms inside. By Minkowski's integral
inequality,%
\begin{eqnarray*}
&&\left\Vert R_{\leqslant M_{k-1}}^{(k-1)}B_{N,1,k}\int_{D}J_{N}^{(k,j)}(%
\underline{t}_{k+j},\mu _{m})\left( f^{(k+j)}\right) d\underline{t}%
_{k+j}\right\Vert _{L_{T}^{1}L_{\mathbf{x,x}^{\prime }}^{2}} \\
&=&\int_{0}^{T}dt_{k}\left\Vert \int_{D}R_{\leqslant
M_{k-1}}^{(k-1)}B_{N,1,k}J_{N}^{(k,j)}(\underline{t}_{k+j},\mu _{m})\left(
f^{(k+j)}\right) d\underline{t}_{k+j}\right\Vert _{L_{\mathbf{x,x}^{\prime
}}^{2}}dt_{k} \\
&\leqslant &\int_{[0,T]^{j+1}}\left\Vert R_{\leqslant
M_{k-1}}^{(k-1)}B_{N,1,k}U^{(k)}(t_{k}-t_{k+1})B_{N,k,k+1}...\right\Vert
_{L_{\mathbf{x,x}^{\prime }}^{2}}dt_{k}d\underline{t}_{k+j}
\end{eqnarray*}
By Cauchy-Schwarz in the $t_{k}$ integration, 
\begin{equation*}
\leqslant T^{\frac{1}{2}}\int_{\left[ 0,T\right] ^{j}}\left( \int \left\Vert
R_{\leqslant
M_{k-1}}^{(k-1)}B_{N,1,k}U^{(k)}(t_{k}-t_{k+1})B_{N,k,k+1}...\right\Vert
_{L_{\mathbf{x,x}^{\prime }}^{2}}dt_{k}\right) ^{\frac{1}{2}}d\underline{t}%
_{k+j}
\end{equation*}%
By Lemma \ref{Lemma:LocalizedKM}, 
\begin{equation*}
\leqslant C_{\varepsilon }T^{\frac{1}{2}}\sum_{M_{k}\geqslant M_{k-1}}\left( 
\frac{M_{k-1}}{M_{k}}\right) ^{1-\varepsilon }\int_{\left[ 0,T\right]
^{j}}\left\Vert R_{\leqslant
M_{k}}^{(k)}B_{N,k,k+1}U^{(k+1)}(t_{k+1}-t_{k+2})\cdots \right\Vert _{L_{%
\mathbf{x},\mathbf{x}^{\prime }}^{2}}d\underline{t}_{k+j}
\end{equation*}%
Iterating the previous step $(j-1)$ times,%
\begin{eqnarray*}
&\leqslant &(C_{\varepsilon }T^{\frac{1}{2}})^{j}\sum_{M_{k+j-1}\geqslant
\cdots \geqslant M_{k}\geqslant M_{k-1}}\Big[\left( \frac{M_{k-1}}{M_{k}}%
\frac{M_{k}}{M_{k+1}}\cdots \frac{M_{k+j-2}}{M_{k+j-1}}\right)
^{1-\varepsilon } \\
&&\times \left\Vert R_{\leqslant M_{k+j-1}}^{(k+j-1)}B_{N,\mu
_{m}(k+j),k+j}\left( f^{(k+j)}\right) \right\Vert _{L_{T}^{1}L_{\mathbf{x,x}%
^{\prime }}^{2}}\Big] \\
&=&(C_{\varepsilon }T^{\frac{1}{2}})^{j}\sum_{M_{k+j-1}\geqslant \cdots
\geqslant M_{k}\geqslant M_{k-1}}\Big[\left( \frac{M_{k-1}}{M_{k+j-1}}%
\right) ^{1-\varepsilon } \\
&&\times \left\Vert R_{\leqslant M_{k+j-1}}^{(k+j-1)}B_{N,\mu
_{m}(k+j),k+j}\left( f^{(k+j)}\right) \right\Vert _{L_{T}^{1}L_{\mathbf{x,x}%
^{\prime }}^{2}}\Big]
\end{eqnarray*}%
where the sum is over all $M_{k},\ldots ,M_{k+j-1}$ dyadic such that $%
M_{k+j-1}\geqslant \cdots \geqslant M_{k}\geqslant M_{k-1}$.

Hence%
\begin{eqnarray*}
&&\left\Vert R_{\leqslant M_{k-1}}^{(k-1)}B_{N,1,k}\QTR{sc}{PP}%
^{(k,l_{c})}\right\Vert _{L_{T}^{1}L_{\mathbf{x,x}^{\prime }}^{2}} \\
&\leqslant &\left\Vert R_{\leqslant
M_{k-1}}^{(k-1)}B_{N,1,k}\int_{0}^{t_{k}}U^{(k)}(t_{k}-t_{k+1})V_{N}^{(k)}%
\gamma _{N}^{(k)}(t_{k+1})dt_{k+1}\right\Vert _{L_{T}^{1}L_{\mathbf{x,x}%
^{\prime }}^{2}} \\
&&+\sum_{j=1}^{l_{c}}\Bigg\{(C_{\varepsilon }T^{\frac{1}{2}%
})^{j}\sum_{M_{k+j-1}\geqslant \cdots \geqslant M_{k}\geqslant M_{k-1}}\Big[%
\frac{M_{k-1}^{1-\varepsilon }}{M_{k+j-1}^{1-\varepsilon }} \\
&&\times \left\Vert R_{\leqslant M_{k+j-1}}^{(k+j-1)}B_{N,\mu
_{m}(k+j),k+j}\left( f^{(k+j)}\right) \right\Vert _{L_{T}^{1}L_{\mathbf{x,x}%
^{\prime }}^{2}}\Big]\Bigg\}
\end{eqnarray*}

We then insert a smooth cut-off $\theta (t)$ with $\theta (t)=1$ for $t\in %
\left[ -T,T\right] $ and $\theta (t)=0$ for $t\in \left[ -2T,2T\right] ^{c}$
into the above estimate to get%
\begin{eqnarray*}
&&\left\Vert R_{\leqslant M_{k-1}}^{(k-1)}B_{N,1,k}\QTR{sc}{PP}%
^{(k,l_{c})}\right\Vert _{L_{T}^{1}L_{\mathbf{x,x}^{\prime }}^{2}} \\
&\leqslant &\left\Vert R_{\leqslant M_{k-1}}^{(k-1)}B_{N,1,k}\theta
(t_{k})\int_{0}^{t_{k}}U^{(k)}(t_{k}-t_{k+1})\theta
(t_{k+1})V_{N}^{(k)}\gamma _{N}^{(k)}(t_{k+1})dt_{k+1}\right\Vert
_{L_{T}^{1}L_{\mathbf{x,x}^{\prime }}^{2}} \\
&&+\sum_{j=1}^{l_{c}}\Bigg\{(C_{\varepsilon }T^{\frac{1}{2}%
})^{j}\sum_{M_{k+j-1}\geqslant \cdots \geqslant M_{k}\geqslant M_{k-1}}\Big[%
\frac{M_{k-1}^{1-\varepsilon }}{M_{k+j-1}^{1-\varepsilon }} \\
&&\times \left\Vert R_{\leqslant M_{k+j-1}}^{(k+j-1)}B_{N,\mu
_{m}(k+j),k+j}\left( \theta (t_{k+j})\tilde{f}^{(k+j)}\right) \right\Vert
_{L_{T}^{1}L_{\mathbf{x,x}^{\prime }}^{2}}\Big]\Bigg\},
\end{eqnarray*}%
where the sum is over all $M_{k},\ldots ,M_{k+j-1}$ dyadic such that $%
M_{k+j-1}\geqslant \cdots \geqslant M_{k}\geqslant M_{k-1},$ and $\tilde{f}%
^{(k+j)}$ is again defined via (\ref{def:f tutle for potential term}).

\subsubsection{Step II}

Using Lemma \ref{Lemma:LocalizedKMWithX_b}, the $X_{b}$ space version of
Lemma \ref{Lemma:LocalizedKM}, we turn Step I into%
\begin{eqnarray*}
&&\left\Vert R_{\leqslant M_{k-1}}^{(k-1)}B_{N,1,k}\QTR{sc}{PP}%
^{(k,l_{c})}\right\Vert _{L_{T}^{1}L_{\mathbf{x,x}^{\prime }}^{2}} \\
&\leqslant &C_{\varepsilon }T^{\frac{1}{2}}\sum_{M_{k}\geqslant M_{k-1}} \Big[
\frac{M_{k-1}^{1-\varepsilon }}{M_{k}^{1-\varepsilon }} \\
&& \qquad \times \left\Vert
\int_{0}^{t_{k}}U^{(k)}(t_{k}-t_{k+1})\left( R_{\leqslant M_{k}}^{(k)}\theta
(t_{k+1})V_{N}^{(k)}\gamma _{N}^{(k)}(t_{k+1})\right) dt_{k+1}\right\Vert
_{X_{\frac{1}{2}+}^{(k)}} \Big] \\
&&+\sum_{j=1}^{l_{c}}(C_{\varepsilon }T^{\frac{1}{2}})^{j+1}\sum_{M_{k+j}%
\geqslant M_{k+j-1}\geqslant \cdots \geqslant M_{k}\geqslant M_{k-1}} \Big[\frac{%
M_{k-1}^{1-\varepsilon }}{M_{k+j}^{1-\varepsilon }} \\
&& \qquad \times \left\Vert \theta
(t_{k+j})R_{\leqslant M_{k+j}}^{(k+j)}\left( \tilde{f}^{(k+j)}\right)
\right\Vert _{X_{\frac{1}{2}+}^{(k+j)}} \Big]
\end{eqnarray*}

\subsubsection{Step III}

Lemma \ref{Lemma:b to b-1} gives us 
\begin{equation*}
\left\Vert R_{\leqslant M_{k-1}}^{(k-1)}B_{N,1,k}\QTR{sc}{PP}%
^{(k,l_{c})}\right\Vert _{L_{T}^{1}L_{\mathbf{x,x}^{\prime }}^{2}}\leqslant
A+B
\end{equation*}%
where%
\begin{equation*}
A=C_{\varepsilon }T^{\frac{1}{2}}\sum_{M_{k}\geqslant M_{k-1}}\frac{%
M_{k-1}^{1-\varepsilon }}{M_{k}^{1-\varepsilon }}\left\Vert R_{\leqslant
M_{k}}^{(k)}\theta (t_{k+1})V_{N}^{(k)}\gamma _{N}^{(k)}(t_{k+1})\right\Vert
_{X_{-\frac{1}{2}+}^{(k)}}
\end{equation*}%
and%
\begin{eqnarray*}
B &=&\sum_{j=1}^{l_{c}}\Bigg\{(C_{\varepsilon }T^{\frac{1}{2}%
})^{j+1}\sum_{M_{k+j}\geqslant M_{k+j-1}\geqslant \cdots \geqslant
M_{k}\geqslant M_{k-1}}\Big[\frac{M_{k-1}^{1-\varepsilon }}{%
M_{k+j}^{1-\varepsilon }} \\
&&\times \left\Vert R_{\leqslant M_{k+j}}^{(k+j)}\theta
(t_{k+j+1})V_{N}^{(k+j)}\gamma _{N}^{(k+j)}(t_{k+j+1})\right\Vert _{X_{-%
\frac{1}{2}+}^{(k+j)}}\Big]\Bigg\}
\end{eqnarray*}

\subsubsection{Step IV}

We focus for a moment on $B$. First, we carry out the sum in $M_{k}\leqslant
\cdots \leqslant M_{k+j-1}$ with the help of Lemma \ref{L:iterates3}:%
\begin{eqnarray*}
B &=&\sum_{j=1}^{l_{c}}\Bigg\{(C_{\varepsilon }T^{\frac{1}{2}%
})^{j+1}\sum_{M_{k+j}\geqslant M_{k-1}}\Big[\frac{M_{k-1}^{1-\varepsilon }}{%
M_{k+j}^{1-\varepsilon }}\left( \frac{(\log _{2}\frac{M_{k+j}}{M_{k-1}}%
+j)^{j}}{j!}\right) \\
&&\times \left\Vert R_{\leqslant M_{k+j}}^{(k+j)}\left( \theta
(t_{k+j+1})V_{N}^{(k+j)}\gamma _{N}^{(k+j)}(t_{k+j+1})\right) \right\Vert
_{X_{-\frac{1}{2}+}^{(k+j)}}\Big]\Bigg\}
\end{eqnarray*}
We then take a $T^{j/4}$ from the front to apply Lemma \ref{L:iterates4} and
get to%
\begin{eqnarray*}
B &\lesssim &C_{\varepsilon }T^{\frac{1}{2}}\sum_{j=1}^{l_{c}}\Bigg\{%
(C_{\varepsilon }T^{\frac{1}{4}})^{j}\sum_{M_{k+j}\geqslant M_{k-1}}\Big[%
\frac{M_{k-1}^{1-2\varepsilon }}{M_{k+j}^{1-2\varepsilon }} \\
&&\times \left\Vert R_{\leqslant M_{k+j}}^{(k+j)}\left( \theta
(t_{k+j+1})V_{N}^{(k+j)}\gamma _{N}^{(k+j)}(t_{k+j+1})\right) \right\Vert
_{X_{-\frac{1}{2}+}^{(k+j)}}\Big]\Bigg\}
\end{eqnarray*}%
where the sum is over dyadic $M_{k+j}$ such that $M_{k+j}\geqslant M_{k-1}$.
Applying \eqref{E:StrCor3},%
\begin{eqnarray*}
B &\lesssim &C_{\varepsilon }T^{\frac{1}{2}}\sum_{j=1}^{l_{c}}\Bigg\{%
(C_{\varepsilon }T^{\frac{1}{4}})^{j}(k+j)^{2}\sum_{M_{k+j}\geqslant M_{k-1}}%
\Big[\frac{M_{k-1}^{1-2\varepsilon }}{M_{k+j}^{1-2\varepsilon }}\min
(M_{k+j}^{2},N^{2\beta })N^{\frac{1}{2}\beta -1} \\
&&\times \left\Vert \theta (t_{k+j+1})S^{(k+j)}\gamma
_{N}^{(k+j)}(t_{k+j+1})\right\Vert _{L_{t_{k+j+1}}^{2}L_{\mathbf{x},\mathbf{x%
}^{\prime }}^{2}}\Big]\Bigg\}
\end{eqnarray*}
Rearranging terms%
\begin{eqnarray*}
B &\lesssim &C_{\varepsilon }T^{\frac{1}{2}}\sum_{j=1}^{l_{c}}\Bigg\{%
(C_{\varepsilon }T^{\frac{1}{4}})^{j}(k+j)^{2}\left\Vert \theta
(t_{k+j+1})S^{(k+j)}\gamma _{N}^{(k+j)}(t_{k+j+1})\right\Vert
_{L_{t_{k+j+1}}^{2}L_{\mathbf{x},\mathbf{x}^{\prime }}^{2}} \\
&&\times M_{k-1}^{1-2\epsilon }N^{\frac{1}{2}\beta -1}\sum_{M_{k+j}\geqslant
M_{k-1}}(\cdots )\Bigg\}
\end{eqnarray*}
where 
\begin{equation*}
\sum_{M_{k+j}\geqslant M_{k-1}}(\cdots )=\sum_{M_{k+j}\geqslant M_{k-1}}\min
(M_{k+j}^{1+2\epsilon },M_{k+j}^{-1+2\epsilon }N^{2\beta })\,.
\end{equation*}%
We carry out the sum in $M_{k+j}$ by dividing into $M_{k+j}\leqslant
N^{\beta }$ (for which $\min (M_{k+j}^{1+2\epsilon },M_{k+j}^{-1+2\epsilon
}N^{2\beta })=M_{j}^{1+2\epsilon }$) and $M_{k+j}\geqslant N^{\beta }$ (for
which $\min (M_{k+j}^{1+2\epsilon },M_{k+j}^{-1+2\epsilon }N^{2\beta
})=M_{k+j}^{-1+2\epsilon }N^{2\beta }$). This yields%
\begin{eqnarray*}
\sum_{M_{k+j}\geqslant M_{k-1}}\min (M_{k+j}^{1+2\epsilon
},M_{k+j}^{-1+2\epsilon }N^{2\beta }) &\lesssim &\left( \sum_{N^{\beta
}\geqslant M_{k+j}\geqslant M_{k-1}}+\sum_{M_{k+j}\geqslant
M_{k-1},M_{k+j}\geqslant N^{\beta }}\right) \left( ...\right) \\
&\lesssim &\sum_{N^{\beta }\geqslant M_{k+j}\geqslant 1}M_{k+j}^{1+2\epsilon
}+\sum_{M_{k+j}\geqslant N^{\beta }}M_{k+j}^{-1+2\epsilon }N^{2\beta } \\
&\lesssim &N^{\beta +2\epsilon }.
\end{eqnarray*}
Hence%
\begin{eqnarray*}
B &\lesssim &C_{\varepsilon }T^{\frac{1}{2}}\sum_{j=1}^{l_{c}} \Big[ (C_{%
\varepsilon }T^{\frac{1}{4}})^{j}(k+j)^{2} \\
&& \quad \times \left\Vert \theta
(t_{k+j+1})S^{(k+j)}\gamma _{N}^{(k+j)}(t_{k+j+1})\right\Vert
_{L_{t_{k+j+1}}^{2}L_{\mathbf{x},\mathbf{x}^{\prime
}}^{2}}M_{k-1}^{1-2\epsilon }N^{\frac{3}{2}\beta -1+2\epsilon } \Big] \\
&\lesssim &M_{k-1}^{1-2\epsilon }N^{\frac{3}{2}\beta -1+2\epsilon
}C_{\varepsilon }T^{\frac{1}{2}}\sum_{j=1}^{l_{c}}(C_{\varepsilon }T^{\frac{1%
}{4}})^{j}(k+j)^{2}T^{\frac{1}{2}}\left\Vert S^{(k+j)}\gamma
_{N}^{(k+j)}\right\Vert _{L_{t}^{\infty }L_{\mathbf{x},\mathbf{x}^{\prime
}}^{2}}
\end{eqnarray*}%
Via Condition $(\ref{condition:energy condition})$, it becomes%
\begin{eqnarray*}
B &\lesssim &M_{k-1}^{1-2\epsilon }N^{\frac{3}{2}\beta -1+2\epsilon
}C_{\varepsilon }T\sum_{j=1}^{l_{c}}(C_{\varepsilon }T^{\frac{1}{4}%
})^{j}(k+j)^{2}C_{0}^{k+j} \\
&\lesssim &C_{0}^{k}M_{k-1}^{1-2\epsilon }N^{\frac{3}{2}\beta -1+2\epsilon
}C_{\varepsilon }T\left( k^{2}\sum_{j=1}^{\infty }(C_{\varepsilon }T^{\frac{1%
}{4}})^{j}C_{0}^{j}+\sum_{j=1}^{\infty }(C_{\varepsilon }T^{\frac{1}{4}%
})^{j}j^{2}C_{0}^{j}\right)
\end{eqnarray*}%
We can then choose a $T$ independent of $M_{k-1}$, $k,$ $l_{c}$ and $N$ such
that the two infinite series converge. We then have%
\begin{equation*}
B\lesssim C^{k-1}M_{k-1}^{1-2\epsilon }N^{\frac{3}{2}\beta -1+2\epsilon }
\end{equation*}%
for some $C$ larger than $C_{0}.$ Therefore, for $\beta <2/3,$ there is a $C$
independent of $M_{k-1}$, $k,$ $l_{c},$ and $N$ s.t. given a $M_{k-1}$,
there is\ $N_{0}(M_{k-1})$ which makes 
\begin{equation*}
B\leqslant C^{k-1}\text{, for all }N\geqslant N_{0}.
\end{equation*}%
This completes the treatment of term $B$ for $\beta <2/3$. Term $A$ is
treated similarly (without the need to appeal to Lemmas \ref{L:iterates3}, %
\ref{L:iterates4} below). Whence we have completed the proof of Theorem \ref%
{THM:beta up to 2/3} and thence Theorem \ref{THM:LocalizedMainTHM}.

\begin{lemma}
\label{L:iterates3} 
\begin{equation*}
\left( \sum_{M_{k-1}\leq M_{k}\leq \cdots \leq M_{k+j-1}\leq
M_{k+j}}1\right) \leq \frac{(\log _{2}\frac{M_{k+j}}{M_{k-1}}+j)^{j}}{j!},
\end{equation*}%
where the sum is in $M_{k}\leq \cdots \leq M_{k+j-1}$ over dyads, such that $%
M_{k-1}\leq M_{k}\leq \cdots \leq M_{k+j-1}\leq M_{k+j}$.
\end{lemma}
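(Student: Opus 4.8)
The plan is to recognize the left-hand side as a standard ``stars and bars'' count. First I would parametrize the dyadic frequencies by their exponents, writing $M_i = 2^{\ell_i}$ with $\ell_i \in \mathbb{Z}$, and in particular $\ell_1 = \log_2 M_1$ and $\ell_j = \log_2 M_j$ (both integers, since $M_1$ and $M_j$ are dyadic, and $\ell_j \ge \ell_1$ in the regime where the lemma is applied). Under this substitution the constraint $M_1 \le M_2 \le \cdots \le M_{j-1} \le M_j$ becomes $\ell_1 \le \ell_2 \le \cdots \le \ell_{j-1} \le \ell_j$, so the sum on the left counts exactly the non-decreasing integer sequences $(\ell_2, \dots, \ell_{j-1})$ of length $j-2$ with values in the set $\{\ell_1, \ell_1 + 1, \dots, \ell_j\}$, a set of cardinality $n := \ell_j - \ell_1 + 1 = \log_2(M_j/M_1) + 1$.

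Next I would invoke the elementary fact that the number of non-decreasing sequences of length $r$ drawn from a set of $n$ elements (equivalently, multisets of size $r$ from $n$ symbols) equals $\binom{n + r - 1}{r}$; the bijection $\ell'_i = \ell_i + (i-1)$ turns such a sequence into a strictly increasing one in a set of size $n + r - 1$. With $r = j - 2$ and $n = \log_2(M_j/M_1) + 1$ this gives
\[
\sum_{M_1 \le M_2 \le \cdots \le M_{j-1} \le M_j} 1 = \binom{\log_2\tfrac{M_j}{M_1} + j - 2}{\,j-2\,}.
\]
Finally I would estimate the binomial coefficient crudely: for a real number $a \ge r \ge 0$ one has $\binom{a}{r} = \frac{a(a-1)\cdots(a-r+1)}{r!} \le \frac{a^r}{r!}$, since each of the $r$ factors in the numerator is nonnegative and at most $a$. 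Taking $a = \log_2(M_j/M_1) + j - 2$ (which indeed satisfies $a \ge j - 2 = r$) yields precisely $\frac{(\log_2(M_j/M_1) + j - 2)^{j-2}}{(j-2)!}$, as claimed.

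There is no real analytic difficulty here; the only care needed is bookkeeping: verifying that the endpoints are genuinely dyadic so that $n$ is a positive integer, that the inequalities are non-strict so one must count with repetition (multisets, not subsets), and that the degenerate cases ($j = 2$, or $M_1 = M_j$) are consistent with the formula, where both sides equal $1$. If one preferred to avoid citing the multiset identity, the same bound follows by induction on $j$, peeling off the outermost sum $\sum_{M_{j-1} \le M_j} 1$ and using $\sum_{i=0}^{m} \binom{i + r - 1}{r-1} = \binom{m + r}{r}$; but the direct combinatorial count above is the cleanest route.
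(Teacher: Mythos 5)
Your proof is correct, but it takes a genuinely different route from the paper. You compute the sum \emph{exactly} via the stars-and-bars (multiset) identity, obtaining the closed form
\begin{equation*}
\sum_{M_1 \le M_2 \le \cdots \le M_{j-1} \le M_j} 1 \;=\; \binom{\log_2\tfrac{M_j}{M_1} + j - 2}{\,j-2\,},
\end{equation*}
and then apply the elementary bound $\binom{a}{r} \le a^r/r!$. The paper instead peels off the innermost sum one variable at a time and at each stage replaces the sum by an integral, using the inequality $\sum_{i=0}^{q}(i+\ell)^p \le (q+\ell+1)^{p+1}/(p+1)$; iterating this $j-3$ times accumulates the factor $1/(j-2)!$ and the shift $j-2$. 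Your argument is more transparent and yields the exact count as an intermediate step, whereas the paper's argument is more mechanical and avoids citing the multiset formula, at the cost of an induction. The two proofs yield the same final estimate, and your attention to the edge cases ($j=2$, $M_1=M_j$) and to the requirement $a \ge r$ when bounding the binomial coefficient is exactly the right bookkeeping.
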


\begin{proof}
This is equivalent to 
\begin{equation*}
\left( \sum_{i_{k-1}\leq i_{k}\leq \cdots \leq i_{k+j-1}\leq
i_{k+j}}1\right) \leq \frac{(i_{k+j}-i_{k-1}+j)^{j}}{j!},
\end{equation*}%
where the sum is taken over integers $i_{k},\ldots ,i_{k+j-1}$ such that $%
i_{k-1}\leq i_{k}\leq \cdots \leq i_{k+j-1}\leq i_{k+j}$. We use the
estimate (for $p\geq 0$, $\ell \geq 0$) 
\begin{equation*}
\sum_{i=0}^{q}(i+\ell )^{p}\leq \frac{(q+\ell +1)^{p+1}}{p+1},
\end{equation*}%
which just follows by estimating the sum by an integral.

First, carry out the sum in $i_{k}$ from $i_{k-1}$ to $i_{k+1}$ to obtain 
\begin{equation*}
=\sum_{i_{k-1}\leq i_{k+1}\leq \cdots \leq i_{k+j-1}\leq i_{k+j}}\left(
\sum_{i_{k}=i_{k-1}}^{i_{k+1}}1\right) \leq \sum_{i_{k-1}\leq i_{k+1}\leq
\cdots \leq i_{k+j-1}\leq i_{k+j}}(i_{k+1}-i_{k-1}+1).
\end{equation*}%
Next, carry out the sum in $i_{k+1}$ from $i_{k-1}$ to $i_{k+2}$, 
\begin{align*}
& \leq \sum_{i_{k-1}\leq i_{k+2}\leq \cdots \leq i_{k+j-1}\leq
i_{k+j}}\left( \sum_{i_{k+1}=i_{k-1}}^{i_{k+2}}(i_{k+1}-i_{k-1}+1)\right) \\
& \leq \sum_{i_{k-1}\leq i_{k+2}\leq \cdots \leq i_{k+j-1}\leq
i_{k+j}}\left( \sum_{i_{k+1}=0}^{i_{k+2}-i_{k-1}}(i_{k+1}+1)\right) \\
& \leq \sum_{i_{k-1}\leq i_{k+2}\leq \cdots \leq i_{k+j-1}\leq i_{k+j}}\frac{%
(i_{k+2}-i_{k-1}+2)^{2}}{2}.
\end{align*}%
Continue in this manner for $j-2$ times to obtain the claimed bound.
\end{proof}

\begin{lemma}
\label{L:iterates4} For each $\alpha>0$ (possibly large) and each $%
\epsilon>0 $ (arbitrarily small), there exists $t>0$ (independent of $M$)
sufficiently small such that 
\begin{equation*}
\forall \; j \geq 1, \; \forall \; M \,, \quad \text{we have} \quad \frac{
t^j (\alpha \log M + j)^j}{j!} \leq M^\epsilon
\end{equation*}
\end{lemma}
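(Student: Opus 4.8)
The plan is to take logarithms and reduce the claim to a statement about the function $g(j) = j\log t + j\log(\alpha\log M + j) - \log(j!)$, which we must show is $\le \epsilon\log M$ for all $j\ge 1$ and all $M\ge 1$ (say $M\ge 2$; for $M$ near $1$ the estimate is trivial since $\log M$ can be absorbed). Using Stirling in the form $\log(j!) \ge j\log j - j$, we get $g(j) \le j\log t + j\log(\alpha\log M + j) - j\log j + j = j + j\log t + j\log\!\bigl(1 + \tfrac{\alpha\log M}{j}\bigr)$. Thus it suffices to find $t>0$ small (depending only on $\alpha,\epsilon$, not on $M$ or $j$) such that
\begin{equation*}
j + j\log t + j\log\!\Bigl(1 + \frac{\alpha\log M}{j}\Bigr) \le \epsilon \log M \qquad \text{for all } j\ge 1,\ M\ge 2.
\end{equation*}

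The key step is to split into two regimes according to the size of $j$ relative to $\log M$. \emph{Regime 1: $j \le \alpha\log M$.} Here $1 + \tfrac{\alpha\log M}{j} \le \tfrac{2\alpha\log M}{j}$, so the left side is at most $j\bigl(1 + \log t + \log(2\alpha) + \log\log M - \log j\bigr)$. The function $x\mapsto x(c - \log x)$ is maximized at $x = e^{c-1}$ with value $e^{c-1}$, so with $c = 1 + \log t + \log(2\alpha) + \log\log M$ the left side is bounded by $e^{c-1} = 2\alpha t \log M$. Choosing $t \le \epsilon/(2\alpha)$ makes this $\le \epsilon\log M$, as desired. \emph{Regime 2: $j \ge \alpha\log M$.} Here $\log\bigl(1 + \tfrac{\alpha\log M}{j}\bigr) \le \log 2$, so the left side is at most $j(1 + \log 2 + \log t)$; choosing $t \le e^{-1-\log 2} = \tfrac{1}{2e}$ makes $1 + \log 2 + \log t \le 0$, so the left side is $\le 0 \le \epsilon\log M$. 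Taking $t = \min\bigl(\tfrac{\epsilon}{2\alpha}, \tfrac{1}{2e}\bigr)$ handles both regimes simultaneously.

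The main obstacle — really the only subtlety — is getting the dependence of the threshold $t$ right: it must depend only on $\alpha$ and $\epsilon$ and not on $M$ or $j$, which is exactly why the calculus fact $\max_{x>0} x(c-\log x) = e^{c-1}$ is the crucial input, since it converts the $M$-dependent constant $c$ into an $M$-linear (in $\log M$) bound that $t$ can absorb. One should also double-check the boundary cases $j=1$ and $M$ close to $2$ directly, and note that $\log j$ may be negative only at $j=1$, where the bound is immediate for $t$ small; everything else is routine. No appeal to earlier results in the paper is needed — this is a self-contained elementary estimate.
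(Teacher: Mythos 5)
Your proof is correct. Both you and the paper begin with the same reduction via Stirling's bound $j! \geq (j/e)^j$, which turns the claim into bounding $(et)^j\bigl(1 + \tfrac{\alpha \log M}{j}\bigr)^j$ by $M^\epsilon$. From there the routes differ: the paper substitutes $j = \alpha(\log M)\,x$ and appeals, without proof, to the elementary fact that for every $\sigma > 0$ one can pick $t$ small enough that $t^x(1 + 1/x)^x \leq e^\sigma$ for all $x > 0$, whereas you work directly in the variable $j$ and make the content of that fact explicit via a split at $j = \alpha\log M$ (the point $x = 1$ under the paper's substitution). In your Regime 1, the calculus identity $\max_{x>0} x(c - \log x) = e^{c-1}$ converts the $\log\log M$ hidden in the constant $c$ into a single factor of $\log M$, which $t$ can then absorb; in Regime 2 the logarithm is bounded by $\log 2$, so any $t \leq 1/(2e)$ already makes the whole expression nonpositive. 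So your argument is more self-contained --- in effect it supplies a proof of the inequality the paper only cites as a ``fact'' --- at the cost of a slightly longer writeup. One small point to tidy: the expression $\log\log M$ in your Regime 1 constant tacitly requires $\log M > 0$, but this is harmless since if $\alpha\log M < 1$ then Regime 1 is empty (all $j \geq 1$ land in Regime 2), and the Regime-2 bound alone gives $g(j) \leq 0 \leq \epsilon\log M$ for every $M \geq 1$.
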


\begin{proof}
We use the following fact: for each $\sigma >0$ (arbitrarily small) there
exists $t>0$ sufficiently small such that 
\begin{equation}
\forall \;x>0\,,\qquad t^{x}\left( \frac{1}{x}+1\right) ^{x}\leq e^{\sigma }
\label{E:iterates10}
\end{equation}
To apply this fact to prove the lemma, use Stirling's formula to obtain 
\begin{equation*}
\frac{t^{j}(\alpha \log M+j)^{j}}{j!}\leq (et)^{j}\left( \frac{\alpha \log
M+j}{j}\right) ^{j}
\end{equation*}
Define $x$ in terms of $j$ by the formula $j=\alpha (\log M)x$. Then 
\begin{equation*}
=\left[ (et)^{x}\left( \frac{1}{x}+1\right) ^{x}\right] ^{\alpha \log M}
\end{equation*}
Applying \eqref{E:iterates10}, 
\begin{equation*}
\leq e^{\sigma \alpha \log M}=M^{\sigma \alpha }
\end{equation*}
\end{proof}

\section{Collapsing and Strichartz estimates}

\label{S:Strichartz-X}

Define the norm 
\begin{equation*}
\Vert \alpha ^{(k)}\Vert _{X_{b}^{(k)}}=\left( \int \langle \tau +\left\vert 
\mathbf{\xi }_{k}\right\vert ^{2}-\left\vert \mathbf{\xi }_{k}^{\prime
}\right\vert ^{2}\rangle ^{2b}\left\vert \hat{\alpha}^{(k)}(\tau ,\mathbf{%
\xi }_{k},\mathbf{\xi }_{k}^{\prime })\right\vert ^{2}\,d\tau\,d\mathbf{\xi }%
_{k}\,d\mathbf{\xi }_{k}^{\prime }\right) ^{1/2}
\end{equation*}

We will use the case $b=\frac{1}{2}+$ of the following lemma.

\begin{lemma}
\label{Lemma:b to b-1}Let $\frac12<b<1$ and $\theta (t)$ be a smooth cutoff.
Then 
\begin{equation}  \label{E:X-1}
\left\Vert \theta (t)\int_{0}^{t}U^{(k)}(t-s)\beta ^{(k)}(s)\,ds\right\Vert
_{X_{b}^{(k)}}\lesssim \Vert \beta ^{(k)}\Vert _{X_{b-1}^{(k)}}
\end{equation}
\end{lemma}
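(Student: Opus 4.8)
The estimate \eqref{E:X-1} is a standard inhomogeneous Duhamel bound in the Bourgain space $X_b^{(k)}$, adapted to the present ``doubled'' dispersion relation $\tau + |\xi_k|^2 - |\xi_k'|^2$. The plan is to reduce it to the scalar one-parameter Schr\"odinger setting by conjugating out the free evolution. Concretely, write $\alpha^{(k)}(t) = U^{(k)}(t)\,v^{(k)}(t)$, so that $\|\alpha^{(k)}\|_{X_b^{(k)}}$ equals the weighted norm $\big(\int \langle \tau\rangle^{2b}|\widehat{v^{(k)}}(\tau,\cdot)|^2\,d\tau\,d\mathbf{\xi}\,d\mathbf{\xi}'\big)^{1/2}$, i.e.\ the $H^b_t L^2_{\mathbf{x},\mathbf{x}'}$ norm of $v^{(k)}$; the profile $v^{(k)}$ associated to $\theta(t)\int_0^t U^{(k)}(t-s)\beta^{(k)}(s)\,ds$ is $\theta(t)\int_0^t U^{(k)}(-s)\beta^{(k)}(s)\,ds$. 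This turns \eqref{E:X-1} into the purely temporal statement
\begin{equation*}
\Big\| \theta(t)\int_0^t w(s)\,ds \Big\|_{H^b_t} \lesssim \|w\|_{H^{b-1}_t},
\end{equation*}
with all spatial variables as inert parameters (the bound is applied inside an $L^2_{\mathbf{x},\mathbf{x}'}$ norm, and Minkowski/Fubini let one pull the temporal estimate through).

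First I would establish this scalar estimate. Decompose $w$ in frequency: $w = w_{\text{lo}} + w_{\text{hi}}$ where $\widehat{w_{\text{lo}}}$ is supported on $|\tau|\le 1$ and $\widehat{w_{\text{hi}}}$ on $|\tau|\ge 1$. For the high piece, write $\int_0^t w_{\text{hi}}(s)\,ds = \int_0^\infty w_{\text{hi}}(s)\,ds \cdot \mathbf{1}_{(0,t]}$ minus the tail; more cleanly, use the identity $\widehat{\theta(t)\int_0^t w(s)\,ds}$ and split the convolution against $\widehat{\mathbf{1}_{(0,t]}}$, whose symbol behaves like $1/\tau$ away from the origin. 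On $|\tau|\gtrsim 1$ one gains a factor $\langle\tau\rangle^{-1}$, which converts the $H^{b-1}_t$ norm of $w$ into the $H^b_t$ norm of the antiderivative; the cutoff $\theta$ is harmless since multiplication by a fixed Schwartz function is bounded on every $H^s_t$. For the low piece $w_{\text{lo}}$, the antiderivative $\int_0^t w_{\text{lo}}(s)\,ds$ is controlled pointwise (and with all its time-derivatives on compact sets) by $\|w_{\text{lo}}\|_{L^1_t(\text{loc})} \lesssim \|w_{\text{lo}}\|_{H^{b-1}_t}$ using Cauchy--Schwarz and $b-1 > -\tfrac12$; then $\theta$ localizes to a compact interval, so $\|\theta \int_0^t w_{\text{lo}}\|_{H^b_t}$ is bounded by a fixed multiple of the sup norm plus one derivative, hence by $\|w\|_{H^{b-1}_t}$. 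This is exactly where the hypothesis $\tfrac12 < b < 1$ enters: $b < 1$ keeps $b-1 < 0$ so the gain of one derivative is not wasted, and $b > \tfrac12$ (equivalently $b-1 > -\tfrac12$) makes the low-frequency $L^1_t$ bound work and guarantees $H^b_t \hookrightarrow C_t$ so the cutoff manipulation is legitimate.

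Having the scalar estimate, I would assemble the proof: apply it with $w(s) = U^{(k)}(-s)\beta^{(k)}(s)$, pointwise in $(\mathbf{x},\mathbf{x}')$ on the Fourier side, then integrate in $\mathbf{\xi}_k,\mathbf{\xi}_k'$ and invoke $\|U^{(k)}(-\cdot)\beta^{(k)}\|_{H^{b-1}_t L^2} = \|\beta^{(k)}\|_{X_{b-1}^{(k)}}$ and the analogous identity for the left side. The main technical obstacle is the low/high frequency interface near $|\tau|=1$ and ensuring the estimate is genuinely uniform in the spatial Fourier parameters $|\xi_k|^2 - |\xi_k'|^2$ — but this is automatic precisely because after the conjugation the shifted dispersion relation has been absorbed into the definition of the profile, so the constant in the scalar $H^b_t$ estimate depends only on $b$ and on the fixed cutoff $\theta$. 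No genuinely new idea beyond the classical Bourgain-space energy estimate (cf.\ \cite[Chapter IV]{Bourgain}) is needed; the only care is bookkeeping the doubled Laplacian, which the substitution $\alpha^{(k)} = U^{(k)} v^{(k)}$ handles once and for all.
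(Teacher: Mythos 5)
Your proposal follows essentially the same path as the paper's proof: conjugate out $U^{(k)}$ to reduce \eqref{E:X-1} to the purely temporal estimate $\|\theta(t)\int_0^t h\|_{H^b_t}\lesssim\|h\|_{H^{b-1}_t}$, then split $h$ into low and high $\tau$-frequency pieces and use that the kernel of the antiderivative has symbol $\sim 1/\tau$ away from the origin. The paper makes your ``more cleanly'' step concrete via the identity $\int_0^t g = \tfrac12\int(\operatorname{sgn}(t-s)+\operatorname{sgn}(s))g(s)\,ds$, which produces the convolution term $\theta\cdot(\operatorname{sgn}*P_{\geq 1}h)$ and a constant multiple of $\theta$; both must be estimated, the former via a fractional Leibniz rule, the latter via Parseval plus Cauchy--Schwarz. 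One small calibration remark: the hypothesis $b>\tfrac12$ is not really doing work in your low-frequency bound (there $\langle\tau\rangle\sim 1$, so $L^2\sim H^{b-1}$ automatically); where $b>\tfrac12$ genuinely bites is in the high-frequency constant term and in the $L^\infty$ bound on $\operatorname{sgn}*P_{\geq 1}h$, both of which need $\langle\tau\rangle^{-b}\in L^2(|\tau|\geq 1)$. This does not affect the correctness of your argument, but you should make explicit that the high-frequency piece splits into a convolution and a constant and handle both.
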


\begin{proof}
The estimate reduces to the space-independent estimate 
\begin{equation}
\left\Vert \theta (t)\int_{0}^{t}h(t^{\prime })\,dt^{\prime }\right\Vert
_{H_{t}^{b}}\lesssim \Vert h\Vert _{H_{t}^{b-1}},\qquad \text{for }\tfrac{1}{%
2}<b\leq 1  \label{E:integral}
\end{equation}%
Indeed, taking $h(t)=h_{\boldsymbol{x}_{k}\boldsymbol{x}_{k}^{\prime }}(t)%
\overset{\mathrm{def}}{=}U^{(k)}(-t)\beta ^{(k)}(t,\boldsymbol{x}_{k},%
\boldsymbol{x}_{k}^{\prime })$, applying the estimate \eqref{E:integral} for
fixed $\mathbf{x}_{k},\mathbf{x}_{k}^{\prime }$, and then applying the $L_{%
\mathbf{x}_{k}\mathbf{x}_{k}^{\prime }}^{2}$ norm to both sides, yields %
\eqref{E:X-1}. Now we prove estimate \eqref{E:integral}. Let $P_{\leq 1}$
and $P_{\geq 1}$ denote Littlewood-Paley projections onto frequencies $|\tau
|\lesssim 1$ and $|\tau |\gtrsim 1$ respectively. Decompose $h=P_{\leq
1}h+P_{\geq 1}h$ and use that $\int_{0}^{t}P_{\geq 1}h(t^{\prime })=\frac{1}{%
2}\int (\func{sgn}(t-t^{\prime })+\func{sgn}(t^{\prime }))P_{\geq
1}h(t^{\prime })\,dt^{\prime }$ to obtain the decomposition 
\begin{equation*}
\theta (t)\int_{0}^{t}h(t^{\prime })\,dt^{\prime
}=H_{1}(t)+H_{2}(t)+H_{3}(t),
\end{equation*}%
where 
\begin{align*}
& H_{1}(t)=\theta (t)\int_{0}^{t}P_{\leq 1}h(t^{\prime })\,dt^{\prime } \\
& H_{2}(t)=\tfrac{1}{2}\theta (t)[\func{sgn}\ast P_{\geq 1}h](t)\,dt^{\prime
} \\
& H_{3}(t)=\tfrac{1}{2}\theta (t)\int_{-\infty }^{+\infty }\func{sgn}%
(t^{\prime })P_{\geq 1}h(t^{\prime })\,dt^{\prime }.
\end{align*}%
We begin by addressing term $H_{1}$. By Sobolev embedding (recall $\frac{1}{2%
}<b\leq 1$) and the $L^{p}\rightarrow L^{p}$ boundedness of the Hilbert
transform for $1<p<\infty $, 
\begin{equation*}
\Vert H_{1}\Vert _{H_{t}^{b}}\lesssim \Vert H_{1}\Vert _{L_{t}^{2}}+\Vert
\partial _{t}H_{1}\Vert _{L_{t}^{2/(3-2b)}}\,.
\end{equation*}%
Using that $\Vert P_{\leq 1}h\Vert _{L_{t}^{\infty }}\lesssim \Vert h\Vert
_{H_{t}^{b-1}}$, we thus conclude 
\begin{equation*}
\Vert H_{1}\Vert _{H_{t}^{b}}\lesssim (\Vert \theta \Vert _{L_{t}^{2}}+\Vert
\theta \Vert _{L_{t}^{2/(3-2b)}}+\Vert \theta ^{\prime }\Vert
_{L_{t}^{2/3-2b}})\Vert h\Vert _{H_{t}^{b-1}}\,.
\end{equation*}

Next we address the term $H_2$. By the fractional Leibniz rule, 
\begin{equation*}
\|H_2\|_{H_t^b} \lesssim \|\langle D_t \rangle^b \theta \|_{L_t^2}\| \func{%
sgn} * P_{\geq 1}h \|_{L_t^\infty} + \|\theta\|_{L_t^\infty} \| \langle D_t
\rangle^b (\func{sgn}*P_{\geq 1}h) \|_{L_t^2} \,.
\end{equation*}
However, 
\begin{equation*}
\| \func{sgn} * P_{\geq 1}h \|_{L_t^\infty} \lesssim \| \langle \tau
\rangle^{-1} \hat h(\tau) \|_{L_\tau^1} \lesssim \| h \|_{H_t^{b-1}} \,.
\end{equation*}
On the other hand, 
\begin{equation*}
\| \langle D_t \rangle^b \func{sgn} * P_{\geq 1}h \|_{L_t^2} \lesssim \|
\langle \tau\rangle^b \langle \tau \rangle^{-1} \hat h(\tau) \|_{L_\tau^2}
\lesssim \|h\|_{H_t^{b-1}}\,.
\end{equation*}
Consequently, 
\begin{equation*}
\| H_2 \|_{H_t^b} \lesssim ( \| \langle D_t\rangle^b \theta \|_{L_t^2} +
\|\theta\|_{L_t^\infty}) \|h\|_{H_t^{b-1}} \,.
\end{equation*}
For term $H_3$, we have 
\begin{equation*}
\|H_3 \|_{H_t^b} \lesssim \|\theta \|_{H_t^b} \left\|
\int_{-\infty}^{+\infty} \func{sgn}(t^{\prime }) P_{\geq 1}h(t^{\prime }) \,
dt^{\prime }\right\|_{L_t^\infty} \,.
\end{equation*}
However, the second term is handled via Parseval's identity 
\begin{equation*}
\int_{t^{\prime }} \func{sgn}(t^{\prime }) P_{\geq 1}h(t^{\prime })
\,dt^{\prime }= \int_{|\tau|\geq 1} \tau^{-1} \hat h(\tau) \, d\tau \,,
\end{equation*}
from which the appropriate bounds follow again by Cauchy-Schwarz. Collecting
our estimates for $H_1$, $H_2$, and $H_3$, we have 
\begin{equation*}
\left\| \theta(t) \int_0^t h(t^{\prime }) \, dt^{\prime }\right\|_{H_t^b}
\lesssim C_\theta \|h\|_{H_t^{b-1}},
\end{equation*}
where 
\begin{equation*}
C_\theta = \|\theta\|_{L_t^2} + \|\theta^{\prime }\|_{L_t^{2/(3-2b)}} + \|
\langle D_t \rangle^b \theta\|_{L_t^2}+ \|\theta\|_{L_t^{2/(3-2b)}} +
\|\theta \|_{L_t^\infty}
\end{equation*}
\end{proof}

\subsection{Various forms of collapsing estimates}

\begin{lemma}
\label{Lemma:KMOriginalEstimateWithVN} There is a $C$ independent of $j,k$,
and $N$ such that, (for $f^{(k+1)}(\mathbf{x}_{k+1}, \mathbf{x}_{k+1})$
independent of $t$) 
\begin{equation*}
\left\Vert R^{(k)}B_{N,j,k+1}U^{(k+1)}(t)f^{(k+1)}\right\Vert _{L_{t}^{2}L_{ 
\mathbf{x},\mathbf{x}^{\prime }}^{2}}\leqslant C\left\Vert V\right\Vert
_{L^{1}}\left\Vert R^{(k+1)}f^{(k+1)}\right\Vert _{L_{\mathbf{x},\mathbf{x}
^{\prime }}^{2}}.
\end{equation*}
\end{lemma}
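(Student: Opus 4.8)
The plan is to reprove the Klainerman--Machedon collapsing estimate of \cite{KlainermanAndMachedon}, tracking the single modification caused by replacing the contact interaction $\delta(x_j-x_{k+1})$ by the mollified potential $V_N(x_j-x_{k+1})$. First I would split $B_{N,j,k+1}=B_{N,j,k+1}^{+}-B_{N,j,k+1}^{-}$, where $B_{N,j,k+1}^{+}\alpha^{(k+1)}(\mathbf{x}_k;\mathbf{x}_k')=\int V_N(x_j-y)\,\alpha^{(k+1)}(\mathbf{x}_k,y;\mathbf{x}_k',y)\,dy$ and $B_{N,j,k+1}^{-}$ has $V_N(x_j'-y)$ in place of $V_N(x_j-y)$. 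Since $V$ is real, the $B^{-}$ piece is estimated by the identical argument with the roles of the primed and unprimed variables exchanged (this replaces $t$ by $-t$ and leaves the relevant $L^2$ norms unchanged), so it suffices to treat $B^{+}$; relabeling the particle indices, we may also take $j=1$.

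Next I would pass to the full space--time Fourier transform. The kernel of $U^{(k+1)}(t)f^{(k+1)}$ has space--time Fourier transform $\hat f^{(k+1)}(\mathbf{\xi}_{k+1};\mathbf{\xi}_{k+1}')\,\delta(\tau+|\mathbf{\xi}_{k+1}|^2-|\mathbf{\xi}_{k+1}'|^2)$. Carrying out the collapse $x_{k+1}=x_{k+1}'=y$, multiplication by $V_N(x_1-y)$, and integration in $y$ shows that, writing $(\mathbf{\eta}_k;\mathbf{\eta}_k')$ for the Fourier variables dual to $(\mathbf{x}_k;\mathbf{x}_k')$, the transform of $B_{N,1,k+1}^{+}U^{(k+1)}(t)f^{(k+1)}$ is the integral over the two collapsed frequencies $\xi_{k+1},\xi_{k+1}'$ of $\widehat{V_N}(\xi_{k+1}-\xi_{k+1}')\,\hat f^{(k+1)}\,\delta(\tau+Q)$, where $\xi_1=\eta_1-\xi_{k+1}+\xi_{k+1}'$, $\xi_\ell=\eta_\ell$ and $\xi_\ell'=\eta_\ell'$ for $2\le\ell\le k$, and $Q=|\xi_1|^2+|\xi_{k+1}|^2-|\xi_{k+1}'|^2+c$ with $c$ depending only on the spectator frequencies $\eta_2,\dots,\eta_k,\eta_1',\dots,\eta_k'$. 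This is precisely the expression occurring in \cite{KlainermanAndMachedon}, the only change being that the constant multiplier $\widehat\delta\equiv1$ there has become $\widehat{V_N}(\xi_{k+1}-\xi_{k+1}')$, which satisfies the $N$-uniform bound $\|\widehat{V_N}\|_{L^\infty}\le\|V_N\|_{L^1}=\|V\|_{L^1}$; the frequencies $\eta_2,\dots,\eta_k,\eta_1',\dots,\eta_k'$ are pure spectators, appearing identically on the two sides.

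Then I would square, integrate in $(\tau,\mathbf{\eta}_k,\mathbf{\eta}_k')$, bound $|\widehat{V_N}|$ by $\|V\|_{L^1}$, and apply Cauchy--Schwarz in the $(\xi_{k+1},\xi_{k+1}')$ integration against $\delta(\tau+Q)\,d\xi_{k+1}\,d\xi_{k+1}'$ with weight $w=\prod_{\ell=1}^{k+1}|\xi_\ell|^2|\xi_\ell'|^2$, the full $R^{(k+1)}$ multiplier. The factor $\int w\,|\hat f^{(k+1)}|^2\,\delta(\tau+Q)$, after integrating out $\tau$ and changing variable $\eta_1\mapsto\xi_1$ (Jacobian $1$), becomes $\|R^{(k+1)}f^{(k+1)}\|_{L^2_{\mathbf{x},\mathbf{x}'}}^2$ by Plancherel, while the left weight $\prod_{\ell=1}^{k}|\eta_\ell|^2|\eta_\ell'|^2$, combined with the $w^{-1}$ from Cauchy--Schwarz and after the obvious spectator cancellations, leaves the weight-side integrand $|\eta_1|^2/(|\xi_1|^2|\xi_{k+1}|^2|\xi_{k+1}'|^2)$. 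Absorbing $c$ into $\tau$, the estimate thus reduces to the finiteness of
\[
\sup_{\tau\in\mathbb{R},\;\eta_1\in\mathbb{R}^3}\;|\eta_1|^2\int_{\mathbb{R}^3\times\mathbb{R}^3}\frac{\delta\!\left(\tau+|\eta_1-\xi_{k+1}+\xi_{k+1}'|^2+|\xi_{k+1}|^2-|\xi_{k+1}'|^2\right)}{|\eta_1-\xi_{k+1}+\xi_{k+1}'|^{2}\,|\xi_{k+1}|^{2}\,|\xi_{k+1}'|^{2}}\,d\xi_{k+1}\,d\xi_{k+1}',
\]
after which the lemma follows with $C$ equal to $\|V\|_{L^1}$ times a universal constant; since the spectators factored out and relabeling costs nothing, $C$ is independent of $j$, $k$, and $N$.

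The main obstacle is exactly the finiteness of that supremum, i.e., the Klainerman--Machedon co-area/surface-measure estimate over the quadric $\{Q=\tau\}$ carrying the double singularity $|\xi_{k+1}|^{-2}|\xi_{k+1}'|^{-2}$ against the $|\eta_1|^2$ in the numerator. The key structural point, which I would exploit as in \cite{KlainermanAndMachedon}, is that freezing $\xi_{k+1}$ turns the constraint $Q=\tau$ into an affine constraint on $\xi_{k+1}'$, so the inner integral is over a plane; one then applies the co-area formula, estimates the resulting two-dimensional integral, and carries out the remaining integral in $\xi_{k+1}$, the various degenerate configurations (where $\eta_1-\xi_{k+1}+\xi_{k+1}'$, $\xi_{k+1}$, or $\xi_{k+1}'$ is small) being treated separately. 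Nothing in that argument uses anything beyond boundedness of the multiplier standing in front of the data, and we have already extracted $\|\widehat{V_N}\|_{L^\infty}\le\|V\|_{L^1}$ uniformly in $N$, so the potential introduces no new difficulty; this is also why only $V\in L^1$ is needed here. Finally, I would note that the hypothesis ``$f^{(k+1)}$ independent of $t$'' is what makes $U^{(k+1)}(t)f^{(k+1)}$ a genuine free evolution on all of $\mathbb{R}_t$, so that the $L^2_t(\mathbb{R})$ norm on the left is the natural one; an $X_b$/Strichartz-based proof is also available and underlies the $X_b$-space version of this estimate used later in the paper.
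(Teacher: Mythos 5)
Your argument is correct in substance, but it is worth noting that the paper's own ``proof'' of this lemma is a one-line citation to estimate (A.18) of \cite{TChenAndNPSpace-Time} and to Theorem 7 of \cite{Chen3DDerivation}, while you instead reconstruct the derivation. What you write is essentially what those references establish: after splitting off $B^{+}$ (the $B^{-}$ term by conjugate symmetry), passing to the space--time Fourier side, and applying Cauchy--Schwarz against the surface measure $\delta(\tau+Q)$ with the weight $R^{(k+1)}$, the only change from the $\delta$-interaction case is the appearance of the multiplier $\widehat{V_N}(\xi_{k+1}-\xi_{k+1}')$, which you control uniformly in $N$ by $\|\widehat{V_N}\|_{L^\infty}\le\|V_N\|_{L^1}=\|V\|_{L^1}$; the spectator frequencies telescope, so the bound is independent of $j$ and $k$. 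Your reduction (the cancellation leaving $|\eta_1|^2/(|\xi_1|^2|\xi_{k+1}|^2|\xi_{k+1}'|^2)$ as the weight-side integrand, the Jacobian-$1$ translation $\eta_1\mapsto\xi_1$, absorbing the spectator constant $c$ into $\tau$) is correct and lands on precisely the Klainerman--Machedon surface-integral supremum, which you defer to \cite{KlainermanAndMachedon}. That deferral is fine---it is the same quadric surface bound, with the $V_N$ factor already removed as an $L^\infty$ constant---and it is consistent with the paper, which for the harder frequency-localized variant (Lemma \ref{Lemma:LocalizedKM}) works out the analogous surface integrals in detail. So: same mathematics, but where the paper outsources the whole lemma to references, you re-derive the reduction and outsource only the final KM quadric estimate; that trade makes explicit the point of the lemma (that mollifying $\delta$ costs only $\|V\|_{L^1}$), which the paper otherwise leaves implicit.
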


\begin{proof}
One can find this estimate as estimate (A.18) in \cite{TChenAndNPSpace-Time}
or a special case of Theorem 7 of \cite{Chen3DDerivation}. For more
estimates of this type, see \cite%
{Kirpatrick,GM,ChenDie,ChenAnisotropic,Beckner,Sohinger}.
\end{proof}

We have the following consequence of Lemma \ref%
{Lemma:KMOriginalEstimateWithVN}.

\begin{lemma}
\label{Lemma:KMEstimateInWithX_b}There is a $C$ independent of $j,k$, and $N$
such that (for $\alpha^{(k+1)}(t, \mathbf{x}_{k+1}, \mathbf{x}_{k+1})$
dependent on $t$) 
\begin{equation*}
\Vert R^{(k)}B_{N,j,k+1}\alpha ^{(k+1)}\Vert _{L_{t}^{2}L_{\mathbf{x}, 
\mathbf{x}^{\prime }}^{2}}\leqslant C\Vert R^{(k+1)}\alpha ^{(k+1)}\Vert
_{X_{\frac{1}{2}+}^{(k+1)}}
\end{equation*}
\end{lemma}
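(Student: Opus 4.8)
The plan is to reduce the $X_b$-norm estimate to the fixed-time collapsing estimate of Lemma \ref{Lemma:KMOriginalEstimateWithVN} by means of the standard $X_{s,b}$ transference (or ``decoupling'') principle. Recall that for $b > \tfrac12$, any function $\alpha^{(k+1)}(t,\cdot)$ can be written as a superposition of free evolutions modulated in $\tau$: writing $\beta^{(k+1)}(t) \defeq U^{(k+1)}(-t)\alpha^{(k+1)}(t)$, one has the Fourier representation $\alpha^{(k+1)}(t) = \int_{\mathbb{R}} e^{it\tau}\, U^{(k+1)}(t)\, \widehat{\beta^{(k+1)}}(\tau)\, d\tau$, where the hat denotes the Fourier transform in $t$ only. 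The scalar factor $e^{it\tau}$ plays no role because $B_{N,j,k+1}$ and $R^{(k)}$ act only in the spatial variables and commute with multiplication by $e^{it\tau}$.

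The steps, in order, are as follows. First, apply $R^{(k)}B_{N,j,k+1}$ to the representation above and pull it inside the $\tau$-integral; since $R^{(k)}B_{N,j,k+1}$ commutes with $e^{it\tau}$, we get
\begin{equation*}
R^{(k)}B_{N,j,k+1}\alpha^{(k+1)}(t) = \int_{\mathbb{R}} e^{it\tau}\, R^{(k)}B_{N,j,k+1}\, U^{(k+1)}(t)\,\widehat{\beta^{(k+1)}}(\tau)\, d\tau.
\end{equation*}
Second, take $\|\cdot\|_{L_t^2 L_{\mathbf{x},\mathbf{x}'}^2}$ of both sides and apply Minkowski's integral inequality to bring the norm inside the $\tau$-integral:
\begin{equation*}
\|R^{(k)}B_{N,j,k+1}\alpha^{(k+1)}\|_{L_t^2 L_{\mathbf{x},\mathbf{x}'}^2} \leqslant \int_{\mathbb{R}} \big\| e^{it\tau}\, R^{(k)}B_{N,j,k+1}\, U^{(k+1)}(t)\,\widehat{\beta^{(k+1)}}(\tau)\big\|_{L_t^2 L_{\mathbf{x},\mathbf{x}'}^2}\, d\tau.
\end{equation*}
Third, for each fixed $\tau$ the factor $|e^{it\tau}| = 1$ drops out, and $\widehat{\beta^{(k+1)}}(\tau)$ is a $t$-independent function of the spatial variables, so Lemma \ref{Lemma:KMOriginalEstimateWithVN} applies directly and bounds the integrand by $C\|V\|_{L^1}\|R^{(k+1)}\widehat{\beta^{(k+1)}}(\tau)\|_{L_{\mathbf{x},\mathbf{x}'}^2}$. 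Fourth, insert the factor $\langle\tau\rangle^b \langle\tau\rangle^{-b}$ and apply Cauchy--Schwarz in $\tau$ (using $b > \tfrac12$ so that $\langle\tau\rangle^{-2b}$ is integrable):
\begin{equation*}
\int_{\mathbb{R}} \|R^{(k+1)}\widehat{\beta^{(k+1)}}(\tau)\|_{L_{\mathbf{x},\mathbf{x}'}^2}\, d\tau \leqslant \Big(\int \langle\tau\rangle^{-2b}\, d\tau\Big)^{1/2} \Big(\int \langle\tau\rangle^{2b}\|R^{(k+1)}\widehat{\beta^{(k+1)}}(\tau)\|_{L_{\mathbf{x},\mathbf{x}'}^2}^2\, d\tau\Big)^{1/2}.
\end{equation*}
Finally, observe that the last factor is precisely $\|R^{(k+1)}\alpha^{(k+1)}\|_{X_b^{(k+1)}}$ with $b = \tfrac12{+}$, since the weight $\langle\tau + |\boldsymbol{\xi}_{k+1}|^2 - |\boldsymbol{\xi}_{k+1}'|^2\rangle$ appearing in the $X_b$ norm is exactly $\langle\tau\rangle$ applied to the spacetime Fourier transform of $\beta^{(k+1)} = U^{(k+1)}(-t)\alpha^{(k+1)}$; this yields the claimed bound with $C$ depending only on $\|V\|_{L^1}$ and on $\int\langle\tau\rangle^{-2b}d\tau$, hence independent of $j,k,N$.

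The routine-but-slightly-delicate point is the bookkeeping in the last step: one must check that $R^{(k+1)}$ commutes with $U^{(k+1)}(\pm t)$ (it does, since both are Fourier multipliers in the spatial variables) so that $R^{(k+1)}\widehat{\beta^{(k+1)}}(\tau)$ is genuinely the Fourier transform in $t$ of $U^{(k+1)}(-t) R^{(k+1)}\alpha^{(k+1)}(t)$, and hence that the weighted $L_\tau^2$ quantity above matches the definition of $\|R^{(k+1)}\alpha^{(k+1)}\|_{X_b^{(k+1)}}$ verbatim. There is no substantive analytic obstacle here; the entire content is the transference argument, and all the hard harmonic analysis has already been absorbed into Lemma \ref{Lemma:KMOriginalEstimateWithVN}.
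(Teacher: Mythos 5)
Your proof is correct and follows essentially the same transference argument as the paper's: both write $\alpha^{(k+1)}(t)=\int e^{it\tau}U^{(k+1)}(t)\,\widehat{\beta^{(k+1)}}(\tau)\,d\tau$ with $\beta^{(k+1)}=U^{(k+1)}(-t)\alpha^{(k+1)}$, apply Minkowski's inequality, invoke Lemma \ref{Lemma:KMOriginalEstimateWithVN} on the $t$-independent profile, and finish with Cauchy--Schwarz in $\tau$ using $\langle\tau\rangle^{\pm b}$. Your extra remarks about $e^{it\tau}$ commuting with the spatial operators and $R^{(k+1)}$ commuting with $U^{(k+1)}$ are correct and simply make explicit what the paper leaves implicit.
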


\begin{proof}
Let 
\begin{equation*}
f^{(k+1)}_\tau (\mathbf{x}_{k+1}, \mathbf{x}_{k+1}^{\prime }) = \mathcal{F}%
_{t\mapsto \tau} ( U^{(k+1)}(-t) \alpha^{(k+1)}(t,\mathbf{x}_{k+1}, \mathbf{x%
}_{k+1}^{\prime }))
\end{equation*}
where $\mathcal{F}_{t\mapsto \tau}$ denotes the Fourier transform in $%
t\mapsto \tau$. Then 
\begin{equation*}
\alpha^{(k+1)}(t,\mathbf{x}_{k+1},\mathbf{x}_{k+1}^{\prime }) = \int_\tau
e^{it\tau} U^{(k+1)}(t) f^{(k+1)}(\mathbf{x}_{k+1},\mathbf{x}_{k+1}^{\prime
}) \, d\tau
\end{equation*}
By Minkowski's inequality 
\begin{equation*}
\Vert R^{(k)}B_{N,j,k+1}\alpha ^{(k+1)}\Vert _{L_{t}^{2}L_{\mathbf{x}, 
\mathbf{x}^{\prime }}^{2}} \leq \int_\tau \Vert R^{(k)}B_{N,j,k+1}
U^{(k+1)}(t)f^{(k+1)} \Vert _{L_{t}^{2}L_{\mathbf{x}, \mathbf{x}^{\prime
}}^{2}} \, d\tau
\end{equation*}
By Lemma \ref{Lemma:KMOriginalEstimateWithVN}, 
\begin{equation*}
\leq \int_\tau \| R^{(k+1)}f^{(k+1)} \|_{L^2_{\mathbf{x},\mathbf{x}^{\prime
}}} \, d\tau
\end{equation*}
For any $b>\frac12$, we write $1=\langle \tau \rangle^{-b} \langle \tau
\rangle^b$ and apply Cauchy-Schwarz in $\tau$ to obtain 
\begin{equation*}
\leq \| \langle \tau \rangle^{b} R^{(k+1)}f^{(k+1)} \|_{L^2_{\tau, \mathbf{x}%
,\mathbf{x}^{\prime }}} = \| R^{(k+1)} \alpha^{(k+1)} \|_{X_b^{(k+1)}}
\end{equation*}
\end{proof}

\begin{lemma}
\label{Lemma:LocalizedKM} For each $\varepsilon >0$, there is a $%
C_{\varepsilon }$ independent of $M_{k},j,k$, and $N$ such that 
\begin{equation*}
\begin{aligned} \hspace{0.3in}&\hspace{-0.3in} \Vert R^{(k)}P_{\leqslant
M_{k}}^{(k)}B_{N,j,k+1}U^{(k+1)}(t)f^{(k+1)}\Vert
_{L_{t}^{2}L_{\mathbf{x},\mathbf{x}^{\prime }}^{2}} \\ &\leqslant
C_{\varepsilon }\left\Vert V\right\Vert _{L^{1}}\sum_{M_{k+1}\geqslant
M_{k}}\left( \frac{M_{k}}{M_{k+1}}\right) ^{1-\varepsilon }\left\Vert
R^{(k+1)}P_{\leqslant M_{k+1}}^{(k+1)}f^{(k+1)}\right\Vert
_{L_{\mathbf{x},\mathbf{x}^{\prime }}^{2}} \end{aligned}
\end{equation*}%
where the sum on the right is in $M_{k+1}$, over dyads such that $%
M_{k+1}\geqslant M_{k}$. In particular, if we drop off the projection $%
P_{\leqslant M_{k+1}}^{(k+1)}$ on the right hand side of the above estimate,
carry out the sum and let $M_{k}\rightarrow \infty ,$ we have Lemma \ref%
{Lemma:KMOriginalEstimateWithVN} back. This merely gives a fine structure of
Lemma \ref{Lemma:KMOriginalEstimateWithVN}, but not an alternative proof of
Lemma \ref{Lemma:KMOriginalEstimateWithVN}.
\end{lemma}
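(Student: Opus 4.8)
The plan is to retrace the Fourier‑analytic proof of the unlocalized collapsing estimate (Lemma~\ref{Lemma:KMOriginalEstimateWithVN}) with the Littlewood--Paley cutoffs inserted, extracting the gain $(M_k/M_{k+1})^{1-\varepsilon}$ from the geometric fact that when the output is supported at frequencies $\lesssim M_k$ while one of the interacting frequencies is $\sim M_{k+1}\gg M_k$, the interacting frequencies must nearly cancel; morally this is a frequency‑localized bilinear space–time estimate in the spirit of the Bourgain estimates mentioned in the introduction.

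\emph{Reductions.} The commutator $B_{N,j,k+1}$ couples only the variables $x_j$ and $x_{k+1}$, so the $k-1$ spectator variables pass through it unchanged; the output projection $P_{\leqslant M_k}^{(k)}$ then forces the spectator frequencies to be $\leqslant M_k$, which only improves the right side. Hence it suffices to treat a single term of the commutator with $k=1$, $j=1$. I would then split the input dyadically using the orthogonal projections $\Delta_{M}:=P_{\leqslant M}^{(k+1)}-P_{\leqslant M/2}^{(k+1)}$ (orthogonal because the $P_{\leqslant M}^{(k+1)}$ are a commuting increasing family), writing $f^{(k+1)}=P_{\leqslant M_k}^{(k+1)}f^{(k+1)}+\sum_{M>M_k}\Delta_M f^{(k+1)}$. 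The first term is estimated by Lemma~\ref{Lemma:KMOriginalEstimateWithVN} directly, giving the $M_{k+1}=M_k$ term; and since $R^{(k+1)}$, $P_{\leqslant M}^{(k+1)}$, $\Delta_M$ commute we have $\|R^{(k+1)}\Delta_M f^{(k+1)}\|_{L^2}\leqslant\|R^{(k+1)}P_{\leqslant M}^{(k+1)}f^{(k+1)}\|_{L^2}$, so the lemma follows from the per‑block bound (for each dyadic $M>M_1$)
\begin{equation*}
\bigl\|R^{(1)}P_{\leqslant M_1}^{(1)}B_{N,1,2}U^{(2)}(t)\Delta_M f^{(2)}\bigr\|_{L_t^2 L_{\mathbf{x},\mathbf{x}'}^2}\leqslant C_\varepsilon\|V\|_{L^1}\Bigl(\tfrac{M_1}{M}\Bigr)^{1-\varepsilon}\bigl\|R^{(2)}\Delta_M f^{(2)}\bigr\|_{L_{\mathbf{x},\mathbf{x}'}^2},
\end{equation*}
after which the geometric sum $\sum_{M_{k+1}\geqslant M_k}(M_k/M_{k+1})^{1-\varepsilon}$ closes the argument.

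\emph{Fourier reduction to a weight.} Writing $\eta_1,\eta_1'$ for the frequencies dual to $x_1,x_1'$ and $\xi_1,\xi_2,\xi_1',\xi_2'$ for those of $\gamma^{(2)}$, the space--time Fourier transform of $\mathrm{Tr}_2[V_N(x_1-x_2)\,U^{(2)}(t)\Delta_M f^{(2)}]$ is an integral over the constraint set $\{\xi_1+\xi_2-\xi_2'=\eta_1,\ \xi_1'=\eta_1',\ \tau+|\xi_1|^2+|\xi_2|^2-|\xi_1'|^2-|\xi_2'|^2=0\}$, carrying a factor $\widehat{V_N}(\xi_2-\xi_2')$ with $|\widehat{V_N}|\leqslant\|V\|_{L^1}$. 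Multiplying by the symbol $|\eta_1||\eta_1'|$ of $R^{(1)}$ and the cutoffs $|\eta_1|,|\eta_1'|\leqslant M_1$, applying Cauchy--Schwarz in $(\xi_2,\xi_2')$ against the measure $\delta(\tau+\cdots)$ after peeling off the $R^{(2)}$-weight $|\xi_1||\xi_2||\xi_1'||\xi_2'|$, and noting that the $R^{(2)}$-factor reassembles (undoing the unit‑Jacobian shear $\xi_1\leftrightarrow\eta_1$) into $\|R^{(2)}\Delta_M f^{(2)}\|_{L^2}^2$, the estimate reduces to $W\lesssim\|V\|_{L^1}^2(M_1/M)^2\log(M/M_1)$ for
\begin{equation*}
W:=\sup_{\tau,\eta_1,\eta_1':\,|\eta_1|\leqslant M_1}|\eta_1|^2\!\int_{\supp\widehat{\Delta_M f^{(2)}}}\!\!\frac{|\widehat{V_N}(\xi_2-\xi_2')|^2}{|\xi_1|^2|\xi_2|^2|\xi_2'|^2}\,\delta\bigl(\tau+|\xi_1|^2+|\xi_2|^2-|\eta_1'|^2-|\xi_2'|^2\bigr)\,d\xi_2\,d\xi_2',
\end{equation*}
the logarithm then being absorbed into an arbitrarily small power of $M/M_1$.

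\emph{The weight integral (the main obstacle).} On $\supp\widehat{\Delta_M f^{(2)}}$ every $\xi_1,\xi_2,\xi_2'$ has size $\lesssim M$ and at least one has size $\sim M$ (it cannot be $\xi_1'=\eta_1'$, which is $\leqslant M_1<M$). In the main case $|\xi_2|\sim|\xi_2'|\sim M$, substitute $u=\xi_2-\xi_2'$, so $\xi_1=\eta_1-u$ and $|u|\lesssim M$; the resonance is affine in $\xi_2$ with gradient $2u$, cutting out a hyperplane, so $\int\delta(\cdots)\,d\xi_2\lesssim M^2/|u|$, whence $W\lesssim\|V\|_{L^1}^2 M^{-2}\,|\eta_1|^2\!\int_{|u|\lesssim M}\!|u|^{-1}|\eta_1-u|^{-2}\,du\lesssim\|V\|_{L^1}^2 M^{-2}\,|\eta_1|^2\log(M/|\eta_1|)$. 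Because $r\mapsto r^2\log(M/r)$ is increasing on $(0,M/2]\supseteq(0,M_1]$, this is $\lesssim\|V\|_{L^1}^2(M_1/M)^2\log(M/M_1)$; here it is essential not to bound $|\eta_1|^2$ by $M_1^2$ prematurely, since $\int|u|^{-1}|\eta_1-u|^{-2}du$ diverges logarithmically as $|\eta_1|\to0$, so one must keep $|\eta_1|^2$ coupled to the $u$-integral. In the remaining configurations the large frequency is $\xi_1$, which forces $|u|=|\eta_1-\xi_1|\sim M$ as well; taking $(\xi_1,\xi_2')$ (resp.\ $(\xi_1,\xi_2)$) as integration variables the resonance is again affine in the second variable with gradient of size $\sim M$, the $\delta$-integral contributes a Jacobian $\sim M^{-1}$ and a two‑dimensional integral of an inverse‑square weight that is $\lesssim\log M$, and integrating over the shell $|\xi_1|\sim M$ with the remaining $M^{-4}$ gives the same bound. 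The second commutator term, in which $\widehat{V_N}(\xi_2-\xi_2')$ is attached to the primed output frequency $\eta_1'=\xi_1'-\xi_2+\xi_2'$, is symmetric under $\eta_1\leftrightarrow\eta_1'$ and is handled identically; reassembling the dyadic pieces finishes the proof.
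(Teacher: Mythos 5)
Your strategy is essentially the paper's: reduce to a supremum weight estimate via Cauchy--Schwarz against the hypersurface measure in the spirit of Klainerman--Machedon, split the input into dyadic pieces, and use that the output frequency $|\eta_1|\leqslant M_1$ is much smaller than the input scale $M$ to extract the gain. Your decomposition is slightly different in form --- you put Littlewood--Paley annuli $\Delta_M$ on $f^{(2)}$, whereas the paper dyadically decomposes the integration region in $\xi_2,\xi_2'$ directly and then inserts the projections $\chi^1_{\leqslant 3M_2}\chi^{1'}_{\leqslant M_1}\chi^2_{\leqslant M_2}$ ``for free'' --- but these are equivalent up to a constant in the dyadic scale, and your ``main case'' is handled correctly, including the essential observation that one must keep $|\eta_1|^2$ coupled to the $u$-integral.

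However, there is a genuine gap in your treatment of the ``remaining configurations'' case. You claim that after the $\delta$-integration (Jacobian $\sim M^{-1}$) the two-dimensional integral of the inverse-square weight is $\lesssim\log M$. This is false as a uniform bound: when the hyperplane passes through (or near) the singularity, the two-dimensional integral $\int_{P}\frac{d\sigma(\xi_2')}{|\xi_2'|^2}\sim\int_0^{M}\frac{\rho\,d\rho}{\rho^2+\lambda^2}\sim\log(M/|\lambda|)$ diverges as $\lambda\to 0$, and $\lambda$ ranges over an interval of size $\sim M$ as $\tau$ and $\xi_1$ vary, so it can be arbitrarily small. To close the estimate one must couple this $\log(M/|\lambda|)$ with the subsequent $\xi_1$-integration (the measure of $\{|\lambda|\leqslant\delta\}$ in the shell $|\xi_1|\sim M$ is $\lesssim\delta M^2$, so the logarithm integrates away and this case actually yields the cleaner bound $|\eta_1|^2 M^{-2}$ with no logarithm at all), exactly in the spirit of what you do correctly in the main case, and what the paper does explicitly via the cases (A), (B), (C) based on where $\alpha$ and $\lambda$ sit. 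As written, your sketch does not perform this coupling. Moreover, even granting the $\log M$ bound at face value, the resulting $W\lesssim|\eta_1|^2 M^{-2}\log M$ is insufficient: $\log M$ is not $O\bigl(\log(M/M_1)\bigr)$ and cannot be absorbed into $(M/M_1)^{\varepsilon}$ when $M$ and $M_1$ are both large with $M/M_1$ bounded; only $\log(M/M_1)$ admits that absorption. So you need to replace ``$\lesssim\log M$ $\cdots$ gives the same bound'' with the careful analysis of the position of the singular hyperplane relative to $\xi_2'=0$ (and $\xi_2=0$), showing that the worst-case logarithm is $\log(M/M_1)$ (or better) after the outer integration.
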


\begin{proof}
It suffices to take $k=1$ and prove 
\begin{eqnarray}
&&\Vert R^{(1)}P_{\leq
M_{1}}^{(1)}B_{N,1,2}(R^{(2)})^{-1}U^{(2)}(t)f^{(2)}\Vert _{L_{t}^{2}L_{%
\mathbf{x}_{1}\mathbf{x}_{1}^{\prime }}^{2}}
\label{estimate:the target for freq KM} \\
&\leq &C_{\varepsilon }\Vert V\Vert _{L^{1}}\sum_{M_{2}\geqslant
M_{1}}\left( \frac{M_{1}}{M_{2}}\right) ^{1-\varepsilon }\Vert P_{\leq
M_{2}}^{(2)}f^{(2)}\Vert _{L_{\mathbf{x}_{2}\mathbf{x}_{2}^{\prime }}^{2}} 
\notag
\end{eqnarray}%
where the sum is over dyadic $M_{2}$ such that $M_{2}\geqslant M_{1}$. For
convenience, we take only \textquotedblleft half\textquotedblright\ of the
operator $B_{N,1,2}$: For $\alpha ^{(2)}(t,x_{1},x_{2},x_{1}^{\prime
},x_{2}^{\prime })$, define 
\begin{equation*}
(\tilde{B}_{N,1,2}\alpha ^{(2)})(t,x_{1},x_{1}^{\prime })\overset{\mathrm{def%
}}{=}\int_{x_{2}}V_{N}(x_{1}-x_{2})\alpha ^{(2)}(t,x_{1},x_{2},x_{1}^{\prime
},x_{2})\,dx_{2}
\end{equation*}%
Note that 
\begin{align*}
\hspace{0.3in}& \hspace{-0.3in}\left( R^{(1)}P_{\leq M_{1}}^{(1)}\tilde{B}%
_{N,1,2}(R^{(2)})^{-1}U^{(2)}(t)f^{(2)}\right) \widehat{\;}(\tau ,\xi
_{1},\xi _{1}^{\prime }) \\
& =\iint_{\xi _{2},\xi _{2}^{\prime }}\chi _{\leqslant M_{1}}^{(1)}\delta
(\cdots )\frac{\widehat{V_{N}}(\xi _{2}+\xi _{2}^{\prime })|\xi _{1}|}{|\xi
_{1}-\xi _{2}-\xi _{2}^{\prime }||\xi _{2}||\xi _{2}^{\prime }|}\widehat{%
f^{(2)}}(\xi _{1}-\xi _{2}-\xi _{2}^{\prime },\xi _{2},\xi _{1}^{\prime
},\xi _{2}^{\prime })\,d\xi _{2}\,d\xi _{2}^{\prime }
\end{align*}%
where $\chi $ represents the Littlewood-Paley multiplier on the Fourier side
and 
\begin{equation*}
\delta (\cdots )=\delta (\tau +\left\vert \xi _{1}-\xi _{2}-\xi _{2}^{\prime
}\right\vert ^{2}+|\xi _{2}|^{2}-\left\vert \xi _{1}^{\prime }\right\vert
^{2}-\left\vert \xi _{2}^{\prime }\right\vert ^{2})
\end{equation*}%
Divide this integration into two pieces: 
\begin{equation*}
=\iint_{|\xi _{2}|\leq |\xi _{2}^{\prime }|}(\cdots )\,d\xi _{2}\,d\xi
_{2}^{\prime }+\iint_{|\xi _{2}^{\prime }|\leq |\xi _{2}|}(\cdots )\,d\xi
_{2}\,d\xi _{2}^{\prime }
\end{equation*}%
In the first term, decompose the $\xi _{2}^{\prime }$ integration into
dyadic intervals, and in the second term, decompose the $\xi _{2}$
integration into dyadic intervals: 
\begin{eqnarray*}
&=&A+B \\
&=&\left( \sum_{M_{2}\geq M_{1}}\iint_{|\xi _{2}|\leq |\xi _{2}^{\prime
}|}\chi _{M_{2}}^{2^{\prime }}(\cdots )\,d\xi _{2}\,d\xi _{2}^{\prime
}+\sum_{M_{2}\geq M_{1}}\iint_{|\xi _{2}^{\prime }|\leq |\xi _{2}|}\chi
_{M_{2}}^{2}(\cdots )\,d\xi _{2}\,d\xi _{2}^{\prime }\right) \\
&&+\left( \iint_{|\xi _{2}|\leq |\xi _{2}^{\prime }|}\chi _{\leq
M_{1}}^{2^{\prime }}(\cdots )\,d\xi _{2}\,d\xi _{2}^{\prime }+\iint_{|\xi
_{2}|\leq |\xi _{2}^{\prime }|}\chi _{\leq M_{1}}^{2}(\cdots )\,d\xi
_{2}\,d\xi _{2}^{\prime }\right) .
\end{eqnarray*}%
The $A$ term is the one that needs elaboration. For $B$, we have%
\begin{equation*}
B=\iint_{|\xi _{2}|\leq |\xi _{2}^{\prime }|}\chi _{\leqslant
M_{1}}^{(1)}\chi _{\leqslant M_{1}}^{2}\chi _{\leq M_{1}}^{2^{\prime
}}(\cdots )\,d\xi _{2}\,d\xi _{2}^{\prime } +\iint_{|\xi _{2}|\leq |\xi
_{2}^{\prime }|}\chi _{\leqslant M_{1}}^{(1)}\chi _{\leq M_{1}}^{2}\chi
_{\leqslant M_{1}}^{2^{\prime }}(\cdots )\,d\xi _{2}\,d\xi _{2}^{\prime }
\end{equation*}%
and thus, by Lemma \ref{Lemma:KMOriginalEstimateWithVN}, we reach 
\begin{eqnarray*}
\left\Vert B\right\Vert _{L_{\tau }^{2}L_{\xi _{1}\xi _{1}^{\prime }}^{2}}
&\leqslant &C\Vert V\Vert _{L^{1}}\Vert P_{\leq M_{1}}^{(2)}f^{(2)}\Vert
_{L_{\mathbf{x}_{2}\mathbf{x}_{2}^{\prime }}^{2}}
\end{eqnarray*}%
which is part of the right hand side of estimate (\ref{estimate:the target
for freq KM}).

We are now left with the estimate of $A.$ Observe that, in the first
integration in $A$, we can insert for free the projection $\chi _{\leq
3M_{2}}^{1}\chi _{\leq M_{1}}^{1^{\prime }}\chi _{\leq M_{2}}^{2}$ onto $%
\hat f^{(2)}$ and in the second integration, we can insert $\chi _{\leq
3M_{2}}^{1}\chi _{\leq M_{1}}^{1^{\prime }}\chi _{\leq M_{2}}^{2^{\prime }}$
onto $\hat f^{(2)}$. 
\begin{equation*}
A=\begin{aligned}[t] &\sum_{M_2 \geq M_1} \iint_{|\xi_2|\leq |\xi_2'|}
\chi_{\leq 3M_2}(\xi_1-\xi_2-\xi_2') \chi^{1'}_{\leq M_1} \chi^{2}_{\leq
M_2}\chi^{2'}_{M_2}(\cdots) \,d\xi_2\, d\xi_2' \\ &+ \sum_{M_2 \geq M_1}
\iint_{|\xi_2'|\leq |\xi_2|} \chi_{\leq 3M_2}(\xi_1-\xi_2-\xi_2')
\chi^{1'}_{\leq M_1} \chi^{2'}_{\leq M_2} \chi^2_{M_2} (\cdots) \,d\xi_2\,
d\xi_2' \end{aligned}
\end{equation*}%
Then for each piece, we proceed as in Klainerman-Machedon \cite%
{KlainermanAndMachedon}, performing Cauchy-Schwarz with respect to measures
supported on hypersurfaces and applying the $L_{\tau \xi _{1}\xi
_{1}^{\prime }}^{2}$ norm to both sides of the resulting inequality.%
\footnote{%
Notice that $\left\Vert \widehat{V_{N}}\right\Vert _{L^{\infty }}\leqslant
\left\Vert V_{N}\right\Vert _{L^{1}}=\left\Vert V\right\Vert _{L^{1}}$ i.e. $%
\widehat{V_{N}}$ is a dummy factor.} In this manner, it suffices to prove
the following estimates, uniform in $\tau ^{\prime }=\tau -\left\vert \xi
_{1}^{\prime }\right\vert ^{2}$: 
\begin{equation}
\iint_{\substack{ |\xi _{2}^{\prime }|\sim M_{2},  \\ |\xi _{2}|\leq M_{2}}}%
\delta (\cdots )\frac{|\xi _{1}|^{2}}{\left\vert \xi _{1}-\xi _{2}-\xi
_{2}^{\prime }\right\vert ^{2}\left\vert \xi _{2}\right\vert ^{2}\left\vert
\xi _{2}^{\prime }\right\vert ^{2}}\,d\xi _{2}\,d\xi _{2}^{\prime }\leq
C_{\varepsilon }\left( \frac{M_{1}}{M_{2}}\right) ^{2(1-\varepsilon )},
\label{E:KMloc1}
\end{equation}%
(recall that $|\xi _{1}|\lesssim M_{1}\ll M_{2}$) and also 
\begin{equation}
\iint_{\substack{ |\xi _{2}|\sim M_{2},  \\ |\xi _{2}^{\prime }|\leq M_{2}}}%
\delta (\cdots )\frac{|\xi _{1}|^{2}}{\left\vert \xi _{1}-\xi _{2}-\xi
_{2}^{\prime }\right\vert ^{2}\left\vert \xi _{2}\right\vert ^{2}\left\vert
\xi _{2}^{\prime }\right\vert ^{2}}\,d\xi _{2}\,d\xi _{2}^{\prime }\leq
C_{\varepsilon }\left( \frac{M_{1}}{M_{2}}\right) ^{2(1-\varepsilon )}.
\label{E:KMloc2}
\end{equation}%
In both \eqref{E:KMloc1} and \eqref{E:KMloc2}, 
\begin{equation*}
\delta (\cdots )=\delta (\tau ^{\prime }+\left\vert \xi _{1}-\xi _{2}-\xi
_{2}^{\prime }\right\vert ^{2}+\left\vert \xi _{2}\right\vert
^{2}-\left\vert \xi _{2}^{\prime }\right\vert ^{2}).
\end{equation*}%
By rescaling $\xi _{2}\mapsto M_{2}\xi _{2}$ and $\xi _{2}^{\prime }\mapsto
M_{2}\xi _{2}^{\prime }$, \eqref{E:KMloc1} and \eqref{E:KMloc2} reduce to,
respectively, the following.  For $|\xi _{1}|\ll 1$,
\begin{equation}
 I(\tau ^{\prime },\xi _{1})\overset{\mathrm{%
def}}{=}\iint_{\substack{ |\xi _{2}^{\prime }|\sim 1,  \\ |\xi _{2}|\leq 2}}%
\delta (\cdots )\frac{|\xi _{1}|^{2}}{\left\vert \xi _{1}-\xi _{2}-\xi
_{2}^{\prime }\right\vert ^{2}\left\vert \xi _{2}\right\vert ^{2}\left\vert
\xi _{2}^{\prime }\right\vert ^{2}}\,d\xi _{2}\,d\xi _{2}^{\prime }\leq
C_{\varepsilon }|\xi _{1}|^{2(1-\varepsilon )},  \label{E:KMloc3}
\end{equation}%
\begin{equation}
 I^{\prime }(\tau ^{\prime },\xi _{1})%
\overset{\mathrm{def}}{=}\iint_{\substack{ |\xi _{2}|\sim 1,  \\ |\xi
_{2}^{\prime }|\leq 2}}\delta (\cdots )\frac{|\xi _{1}|^{2}}{\left\vert \xi
_{1}-\xi _{2}-\xi _{2}^{\prime }\right\vert ^{2}\left\vert \xi
_{2}\right\vert ^{2}\left\vert \xi _{2}^{\prime }\right\vert ^{2}}\,d\xi
_{2}\,d\xi _{2}^{\prime }\leq C_{\varepsilon }|\xi _{1}|^{2(1-\varepsilon )}.
\label{E:KMloc4}
\end{equation}%
To be precise, the $\xi _{1}$ in estimates \eqref{E:KMloc3} and %
\eqref{E:KMloc4} is $\xi _{1}/M_{2}$ in estimates \eqref{E:KMloc1} and %
\eqref{E:KMloc2}. We shall obtain the upper bound $|\xi _{1}|^{2}\log |\xi
_{1}|^{-1}$ for both \eqref{E:KMloc3}, \eqref{E:KMloc4}.

First, we prove \eqref{E:KMloc4}. Begin by carrying out the $\xi
_{2}^{\prime }$ integral to obtain 
\begin{equation*}
I^{\prime }(\tau ^{\prime },\xi _{1})=\frac{1}{2}|\xi _{1}|^{2}\int_{\frac{1%
}{2}\leq |\xi _{2}|\leq 2}\frac{H^{\prime }(\tau ^{\prime },\xi _{1},\xi
_{2})}{|\xi _{1}-\xi _{2}||\xi _{2}|^{2}}\;d\xi _{2}
\end{equation*}%
where $H^{\prime }(\tau ^{\prime },\xi _{1},\xi _{2})$ is defined as
follows. Let $P^{\prime }$ be the truncated plane defined by 
\begin{equation*}
P^{\prime }(\tau ^{\prime },\xi _{1},\xi _{2})=\left\{ \xi _{2}^{\prime }\in 
\mathbb{R}^{3}\;|\;(\xi _{2}^{\prime }-\lambda \omega )\cdot \omega
=0\,,\;|\xi _{2}^{\prime }|\leq 2\right\}
\end{equation*}%
where 
\begin{equation*}
\omega =\frac{\xi _{1}-\xi _{2}}{|\xi _{1}-\xi _{2}|}\,,\quad \lambda =\frac{%
\tau ^{\prime }+|\xi _{1}-\xi _{2}|^{2}+|\xi _{2}|^{2}}{2|\xi _{1}-\xi _{2}|}
\end{equation*}%
Now let 
\begin{equation}
H^{\prime }(\tau ^{\prime },\xi _{1},\xi _{2})=\int_{\xi _{2}^{\prime }\in
P^{\prime }(\tau ^{\prime },\xi _{1},\xi _{2})}\frac{d\sigma (\xi
_{2}^{\prime })}{\left\vert \xi _{1}-\xi _{2}-\xi _{2}^{\prime }\right\vert
^{2}\left\vert \xi _{2}^{\prime }\right\vert ^{2}}  \label{E:KMloc7}
\end{equation}%
where the integral is computed with respect to the surface measure on $%
P^{\prime }$.

Since $|\xi _{1}-\xi _{2}|\sim 1$, $|\xi _{2}|\sim 1$, we have the following
reduction 
\begin{equation*}
I^{\prime }(\tau ^{\prime },\xi _{1})\lesssim |\xi _{1}|^{2}\int_{\frac{1}{2}%
\leq |\xi _{2}|\leq 2}H^{\prime }(\tau ^{\prime },\xi _{1},\xi _{2})\,d\xi
_{2}
\end{equation*}%
We now evaluate $H^{\prime }(\tau ^{\prime },\xi _{1},\xi _{2})$. Introduce
polar coordinates $(\rho ,\theta )$ on the plane $P^{\prime }$ with respect
to the \textquotedblleft center\textquotedblright\ $\lambda \omega $, and
note that 
\begin{equation}
\begin{aligned} |\xi_1-\xi_2-\xi_2'|^2 &= | |\xi_1-\xi_2| \omega - \xi_2'|^2
\\ &= | (|\xi_1-\xi_2| - \lambda)\omega - (\xi_2'-\lambda \omega)|^2 \\ &=
(|\xi_1-\xi_2| - \lambda)^2 + |\xi_2'-\lambda \omega|^2 \\ &= (|\xi_1-\xi_2|
- \lambda)^2 + \rho^2\\ &= \alpha^2+\rho^2 \end{aligned}  \label{E:KMloc5}
\end{equation}%
where 
\begin{equation*}
\alpha =|\xi _{1}-\xi _{2}|-\lambda =\frac{|\xi _{1}|^{2}-2\xi _{1}\cdot \xi
_{2}-\tau ^{\prime }}{2|\xi _{1}-\xi _{2}|}
\end{equation*}%
Also,%
\begin{equation}
\left\vert \xi _{2}^{\prime }\right\vert ^{2}=\left\vert \left( \xi
_{2}^{\prime }-\lambda \omega \right) +\lambda \omega \right\vert
^{2}=\left\vert \xi _{2}^{\prime }-\lambda \omega \right\vert ^{2}+\lambda
^{2}=\rho ^{2}+\lambda ^{2}  \label{E:KMloc6}
\end{equation}%
Using \eqref{E:KMloc5} and \eqref{E:KMloc6} in \eqref{E:KMloc7}, 
\begin{equation*}
H^{\prime }(\tau ^{\prime },\xi _{1},\xi _{2})=\int_{0}^{\sqrt{4-\lambda ^{2}%
}}\frac{2\pi \rho \,d\rho }{(\rho ^{2}+\alpha ^{2})(\rho ^{2}+\lambda ^{2})}
\end{equation*}%
The restriction to $0\leq \rho \leq \sqrt{4-\lambda ^{2}}$ arises from the
fact that the plane $P^{\prime }$ must sit within the ball $|\xi
_{2}^{\prime }|\leq 2$. In particular, $H^{\prime }(\tau ,\xi _{1},\xi
_{2})=0$ if $|\lambda |\geq 2$ since then the plane $P^{\prime }$ is located
entirely outside the ball $|\xi _{2}^{\prime }|\leq 2$. Since $|\lambda
|\leq 2$, we have $|\alpha |\leq 3$ and $|\tau ^{\prime }|\leq 10$.

We consider the three cases: (A) $|\lambda| \leq \frac14$ (which implies $%
|\alpha| \geq \frac14$), (B) $|\alpha| \leq \frac14$ (which implies $%
|\lambda| \geq \frac14$), and (C) $|\lambda| \geq \frac14$ and $|\alpha|\geq
\frac14$. Case (C) is the easiest since clearly $|H^{\prime }(\tau^{\prime
}, \xi_1,\xi_2)| \leq C$.

Let us consider case (B). Then 
\begin{equation*}
H^{\prime }(\tau ,\xi _{1},\xi _{2})\lesssim \int_{\rho =0}^{2}\frac{\rho
\,d\rho }{\rho ^{2}+\alpha ^{2}}=\int_{\nu =0}^{\sqrt{2}}\frac{d\nu }{\nu
+\alpha ^{2}}=\log \left( 1+\frac{\sqrt{2}}{\alpha ^{2}}\right)
\end{equation*}%
Substituting back into $I^{\prime }$, 
\begin{equation*}
I^{\prime }(\tau ^{\prime },\xi _{1})\lesssim |\xi _{1}|^{2}\int_{|\xi
_{2}|\leq 2}\log \left( 1+\frac{\sqrt{2}}{\alpha ^{2}}\right) \,d\xi _{2}
\end{equation*}%
Since $|\alpha |\leq \sqrt{3}$, it follows that\footnote{%
The first step is simply: if $x\geq \delta >0$, then $\log (1+x)\leq \log
x+\log (1+\frac{1}{\delta })$. The second step uses that $|\xi _{1}-\xi
_{2}|\sim 1$, which follows since $|\xi _{1}|\ll 1$ and $|\xi _{2}|\sim 1$.} 
\begin{align*}
\log (1+\frac{\sqrt{2}}{\alpha ^{2}})& \leq c+|\log |\alpha || \\
& \leq c+|\log |(|\xi _{1}|^{2}-2\xi _{1}\cdot \xi _{2}-\tau ^{\prime })|| \\
& =c+|\log 2|\xi _{1}\cdot (\xi _{2}-\frac{1}{2}\xi _{1}+\frac{\tau ^{\prime
}\xi _{1}}{2|\xi _{1}|^{2}})| \\
& =c+|\log |\xi _{1}\cdot (\xi _{2}-\frac{1}{2}\xi _{1}+\frac{\tau ^{\prime
}\xi _{1}}{2|\xi _{1}|^{2}})|
\end{align*}%
Hence 
\begin{equation*}
I^{\prime }(\tau ^{\prime },\xi _{1})\lesssim |\xi _{1}|^{2}\left(
1+\int_{|\xi _{2}|\leq 2}|\log |\xi _{1}\cdot (\xi _{2}-\frac{1}{2}\xi _{1}+%
\frac{\tau ^{\prime }\xi _{1}}{2|\xi _{1}|^{2}})|\,d\xi _{2}\right)
\end{equation*}%
Denoting by $B(\mu ,r)$ the ball of center $\mu $ and radius $r$, the
substitution $\xi _{2}\mapsto \xi _{2}+\frac{1}{2}\xi _{1}-\frac{\tau
^{\prime }\xi _{1}}{2|\xi _{1}|^{2}}$ yields, with $\mu =\frac{1}{2}\xi _{1}-%
\frac{\tau ^{\prime }\xi _{1}}{2|\xi _{1}|^{2}}$, 
\begin{align*}
I^{\prime }(\tau ^{\prime },\xi _{1})& \lesssim |\xi _{1}|^{2}\left(
1+\int_{B(\mu ,2)}|\log |\xi _{1}\cdot \xi _{2}||\,d\xi _{2}\right) \\
& \lesssim |\xi _{1}|^{2}\left( \log |\xi _{1}|^{-1}+\int_{B(\mu ,2)}|\log |%
\frac{\xi _{1}}{|\xi _{1}|}\cdot \xi _{2}||\,d\xi _{2}\right)
\end{align*}%
By rotating coordinates so that $\frac{\xi _{1}}{|\xi _{1}|}=(1,0,0)$, and
letting $\mu ^{\prime }$ denote the corresponding rotation of $\mu $, 
\begin{equation*}
I^{\prime }(\tau ^{\prime },\xi _{1})\lesssim |\xi _{1}|^{2}\left( \log |\xi
_{1}|^{-1}+\int_{B(\mu ^{\prime },2)}|\log |(\xi _{2})_{1}|\,d\xi _{2}\right)
\end{equation*}%
where $(\xi _{2})_{1}$ denotes the first coordinate of the vector $\xi _{2}$%
. Since $|\tau ^{\prime }|\leq 10$, it follows that $|\mu ^{\prime
}|\lesssim |\xi _{1}|^{-1}$ and we finally obtain 
\begin{equation*}
I^{\prime }(\tau ^{\prime },\xi _{1})\lesssim |\xi _{1}|^{2}\log |\xi
_{1}|^{-1}
\end{equation*}%
as claimed, completing Case (B).

Case (A) is similar except that we begin with the bound 
\begin{equation*}
H^{\prime }(\tau^{\prime },\xi_1,\xi_2) \lesssim \int_{\rho=0}^{2} \frac{%
2\pi \rho \, d\rho }{ \rho^2+\lambda^2}
\end{equation*}
This completes the proof of \eqref{E:KMloc4}.

Next, we prove \eqref{E:KMloc3}. In the integral defining $I(\tau^{\prime
},\xi_1)$, we have the restriction $\frac12 \leq |\xi_2^{\prime }| \leq 2$
and $|\xi_2| \leq 2$. Note that if $\frac14\leq |\xi_2| \leq 2$, then the
argument above that provided the bound for $I^{\prime }(\tau^{\prime
},\xi_1) $ applies. Hence it suffices to restrict to $|\xi_2| \leq \frac14$,
from which it follows that $|\xi_1-\xi_2-\xi_2^{\prime }| \sim 1$.

Begin by carrying out the $\xi_{2}^{\prime }$ integral to obtain 
\begin{equation}  \label{E:KMloc20}
I(\tau ^{\prime },\xi _{1})=\frac{1}{2}|\xi _{1}|^{2}\int_{|\xi _{2}|\leq 2}%
\frac{H(\tau ^{\prime },\xi _{1},\xi _{2})}{|\xi _{1}-\xi _{2}||\xi _{2}|^{2}%
}\;d\xi _{2}
\end{equation}
where $H(\tau ^{\prime },\xi _{1},\xi _{2})$ is defined as follows. Let $P$
be the truncated plane defined by 
\begin{equation*}
P(\tau ^{\prime },\xi _{1},\xi _{2})=\{\;\xi _{2}^{\prime }\in \mathbb{R}%
^{3}\;|\;(\xi _{2}^{\prime }-\lambda \omega )\cdot \omega =0\,,\; \frac12
\leq |\xi _{2}^{\prime }|\leq 2\;\}
\end{equation*}
where 
\begin{equation*}
\omega =\frac{\xi _{1}-\xi _{2}}{|\xi _{1}-\xi _{2}|}\,,\quad \lambda =\frac{%
\tau ^{\prime }+|\xi _{1}-\xi _{2}|^{2}+|\xi _{2}|^{2}}{2|\xi _{1}-\xi _{2}|}
\end{equation*}
Now let 
\begin{equation*}
H(\tau ^{\prime },\xi _{1},\xi _{2})=\int_{\xi _{2}^{\prime }\in P(\tau
^{\prime },\xi _{1},\xi _{2})}\frac{d\sigma (\xi _{2}^{\prime })}{\left\vert
\xi _{1}-\xi _{2}-\xi _{2}^{\prime }\right\vert ^{2}\left\vert \xi
_{2}^{\prime }\right\vert ^{2}}
\end{equation*}
where the integral is computed with respect to the surface measure on $P$.
Since $|\xi_1-\xi_2-\xi_2^{\prime }| \sim 1$ and $|\xi_2^{\prime }|\sim 1$,
we obtain $H(\tau^{\prime },\xi_1,\xi_2) \leq C$. Substituting into %
\eqref{E:KMloc20}, we obtain 
\begin{align*}
I(\tau^{\prime },\xi_1) &\lesssim |\xi_1|^2 \int_{|\xi_2| \leq \frac14} 
\frac{ d\xi_2 }{|\xi_1-\xi_2||\xi_2|^2} \\
&\lesssim |\xi_1|^2 \left(\int_{|\xi_2|\leq 2|\xi_1|} \frac{ d\xi_2 }{%
|\xi_1-\xi_2||\xi_2|^2} + \int_{2|\xi_1|\leq |\xi_2|\leq \frac14} \frac{
d\xi_2 }{|\xi_1-\xi_2||\xi_2|^2} \right)
\end{align*}
In the first integral, we change variables $\xi_2= |\xi_1|\eta$, and in the
second integral, we use the bound $|\xi_1-\xi_2|^{-1} \leq 2|\xi_2|^{-1}$ to
obtain 
\begin{equation*}
\lesssim |\xi_1|^2 \left(\int_{|\eta|\leq 2} \frac{d\eta}{|\frac{\xi_1}{%
|\xi_1|} - \eta| |\eta|^2} + \int_{2|\xi_1| \leq |\xi_2| \leq \frac14} \frac{%
d\xi_2}{|\xi_2|^3}\right) \lesssim |\xi_1|^2 \log |\xi_1|^{-1}
\end{equation*}
This completes the proof of \eqref{E:KMloc3}.
\end{proof}

\begin{lemma}
\label{Lemma:LocalizedKMWithX_b}For each $\varepsilon >0$, there is a $%
C_{\varepsilon }$ independent of $M_{k},j,k$, and $N$ such that 
\begin{equation*}
\begin{aligned}
\indentalign \Vert R^{(k)}P_{\leqslant M_{k}}^{(k)}B_{N,j,k+1}\alpha ^{(k+1)}\Vert
_{L_{t}^{2}L_{\mathbf{x},\mathbf{x}^{\prime }}^{2}} \\
&\leqslant C_{\varepsilon
}\sum_{M_{k+1}\geqslant M_{k}}\left( \frac{M_{k}}{M_{k+1}}\right)
^{1-\varepsilon }\left\Vert R^{(k+1)}P_{\leqslant M_{k+1}}^{(k+1)}\alpha
^{(k+1)}\right\Vert _{X_{\frac{1}{2}+}^{(k)}}.
\end{aligned}
\end{equation*}
where the sum on the right is in $M_{k+1}$, over dyads such that $%
M_{k+1}\geqslant M_{k}$.
\end{lemma}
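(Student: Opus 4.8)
The plan is to obtain this as the $X_b$-space version of Lemma~\ref{Lemma:LocalizedKM} by the same transference (``freezing the modulation'') argument that was used to pass from Lemma~\ref{Lemma:KMOriginalEstimateWithVN} to Lemma~\ref{Lemma:KMEstimateInWithX_b}. First I would set $f^{(k+1)}_{\tau}\defeq \mathcal{F}_{t\mapsto\tau}\big(U^{(k+1)}(-t)\alpha^{(k+1)}(t,\mathbf{x}_{k+1},\mathbf{x}_{k+1}')\big)$, so that $\alpha^{(k+1)}(t)=\int_{\tau}e^{it\tau}\,U^{(k+1)}(t)f^{(k+1)}_{\tau}\,d\tau$. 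Since the scalar $e^{it\tau}$ commutes with $B_{N,j,k+1}$ and with the spatial Fourier multipliers $R^{(k)}$, $P^{(k)}_{\leqslant M_k}$ and has modulus one, applying $R^{(k)}P^{(k)}_{\leqslant M_k}B_{N,j,k+1}$ and then Minkowski's integral inequality in $\tau$ yields
\begin{equation*}
\Vert R^{(k)}P^{(k)}_{\leqslant M_k}B_{N,j,k+1}\alpha^{(k+1)}\Vert_{L^2_tL^2_{\mathbf{x},\mathbf{x}'}}\leqslant \int_\tau \Vert R^{(k)}P^{(k)}_{\leqslant M_k}B_{N,j,k+1}U^{(k+1)}(t)f^{(k+1)}_{\tau}\Vert_{L^2_tL^2_{\mathbf{x},\mathbf{x}'}}\,d\tau .
\end{equation*}

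Now $f^{(k+1)}_{\tau}$ is $t$-independent, so Lemma~\ref{Lemma:LocalizedKM} applies to the integrand and bounds it by $C_\varepsilon\sum_{M_{k+1}\geqslant M_k}(M_k/M_{k+1})^{1-\varepsilon}\Vert R^{(k+1)}P^{(k+1)}_{\leqslant M_{k+1}}f^{(k+1)}_{\tau}\Vert_{L^2_{\mathbf{x},\mathbf{x}'}}$ (the harmless factor $\Vert V\Vert_{L^1}$ being absorbed into $C_\varepsilon$). I would then interchange the nonnegative dyadic sum in $M_{k+1}$ with the $\tau$-integral by Tonelli, and for each fixed $M_{k+1}$ insert $1=\langle\tau\rangle^{-b}\langle\tau\rangle^{b}$ with $b=\tfrac12+$ and apply Cauchy--Schwarz in $\tau$; since $\langle\tau\rangle^{-2b}\in L^1_\tau$ this controls $\int_\tau\Vert R^{(k+1)}P^{(k+1)}_{\leqslant M_{k+1}}f^{(k+1)}_{\tau}\Vert_{L^2_{\mathbf{x},\mathbf{x}'}}\,d\tau$ by $C_b\,\Vert\langle\tau\rangle^{b}R^{(k+1)}P^{(k+1)}_{\leqslant M_{k+1}}f^{(k+1)}_{\tau}\Vert_{L^2_{\tau,\mathbf{x},\mathbf{x}'}}$. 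Finally, because $P^{(k+1)}_{\leqslant M_{k+1}}$ and $R^{(k+1)}$ commute with $U^{(k+1)}(\pm t)$ and with $\mathcal{F}_{t\mapsto\tau}$, an elementary change of variables in the modulation variable (the substitution $\tau\mapsto\tau+|\mathbf{\xi}_{k+1}|^2-|\mathbf{\xi}_{k+1}'|^2$) identifies this last quantity with $\Vert R^{(k+1)}P^{(k+1)}_{\leqslant M_{k+1}}\alpha^{(k+1)}\Vert_{X_b^{(k+1)}}$, which gives the asserted estimate.

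I do not expect a genuine obstacle here. The only points that deserve a sentence of verification are (i) that the frequency cutoff $P^{(k+1)}_{\leqslant M_{k+1}}$ passes through both $U^{(k+1)}$ and the time-Fourier transform, so that the $X_b$-identity used at the end of the proof of Lemma~\ref{Lemma:KMEstimateInWithX_b} survives localization; and (ii) the interchange of the $M_{k+1}$-summation with the $\tau$-integration, which is immediate from Tonelli since every term is nonnegative. Everything else is verbatim the argument proving Lemma~\ref{Lemma:KMEstimateInWithX_b}.
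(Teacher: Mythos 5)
Your proposal is correct and follows essentially the same argument as the paper, which itself remarks that the proof is exactly the deduction of Lemma~\ref{Lemma:KMEstimateInWithX_b} from Lemma~\ref{Lemma:KMOriginalEstimateWithVN}: expand $\alpha^{(k+1)}$ over modulations $\tau$, apply Minkowski, invoke the frequency-localized collapsing estimate (Lemma~\ref{Lemma:LocalizedKM}) for each fixed $\tau$, and finish with Cauchy--Schwarz in $\tau$. The extra care you take with Tonelli and with checking that $P_{\leqslant M_{k+1}}^{(k+1)}$ commutes through $U^{(k+1)}$ and the time-Fourier transform is sound and matches what the paper implicitly uses.
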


\begin{proof}
The proof is exactly the same as deducing Lemma \ref%
{Lemma:KMEstimateInWithX_b} from Lemma \ref{Lemma:KMOriginalEstimateWithVN}.
We include the proof for completeness. Let 
\begin{equation*}
f_{\tau }^{(k+1)}(\mathbf{x}_{k+1},\mathbf{x}_{k+1}^{\prime })=\mathcal{F}%
_{t\mapsto \tau }(U^{(k+1)}(-t)\alpha ^{(k+1)}(t,\mathbf{x}_{k+1},\mathbf{x}%
_{k+1}^{\prime }))
\end{equation*}%
where $\mathcal{F}_{t\mapsto \tau }$ denotes the Fourier transform in $%
t\mapsto \tau $. Then 
\begin{equation*}
\alpha ^{(k+1)}(t,\mathbf{x}_{k+1},\mathbf{x}_{k+1}^{\prime })=\int_{\tau
}e^{it\tau }U^{(k+1)}(t)f^{(k+1)}(\mathbf{x}_{k+1},\mathbf{x}_{k+1}^{\prime
})\,d\tau
\end{equation*}%
By Minkowski's inequality 
\begin{equation*}
\Vert R^{(k)}P_{\leqslant M_{k}}^{(k)}B_{N,j,k+1}\alpha ^{(k+1)}\Vert
_{L_{t}^{2}L_{\mathbf{x},\mathbf{x}^{\prime }}^{2}}\leq \int_{\tau }\Vert
R^{(k)}P_{\leqslant M_{k}}^{(k)}B_{N,j,k+1}U^{(k+1)}(t)f^{(k+1)}\Vert
_{L_{t}^{2}L_{\mathbf{x},\mathbf{x}^{\prime }}^{2}}\,d\tau
\end{equation*}%
By Lemma \ref{Lemma:LocalizedKM}, 
\begin{equation*}
\leq C_{\varepsilon }\sum_{M_{k+1}\geqslant M_{k}}\left( \frac{M_{k}}{M_{k+1}%
}\right) ^{1-\varepsilon }\int_{\tau }\Vert R^{(k+1)}P_{\leqslant
M_{k+1}}^{(k+1)}f^{(k+1)}\Vert _{L_{\mathbf{x},\mathbf{x}^{\prime
}}^{2}}\,d\tau
\end{equation*}%
For any $b>\frac{1}{2}$, we write $1=\langle \tau \rangle ^{-b}\langle \tau
\rangle ^{b}$ and apply Cauchy-Schwarz in $\tau $ to obtain 
\begin{eqnarray*}
&\leq &C_{\varepsilon }\sum_{M_{k+1}\geqslant M_{k}}\left( \frac{M_{k}}{%
M_{k+1}}\right) ^{1-\varepsilon }\Vert \langle \tau \rangle
^{b}R^{(k+1)}P_{\leqslant M_{k+1}}^{(k+1)}f^{(k+1)}\Vert _{L_{\tau ,\mathbf{x%
},\mathbf{x}^{\prime }}^{2}} \\
&=&C_{\varepsilon }\sum_{M_{k+1}\geqslant M_{k}}\left( \frac{M_{k}}{M_{k+1}}%
\right) ^{1-\varepsilon }\left\Vert R^{(k+1)}P_{\leqslant
M_{k+1}}^{(k+1)}\alpha ^{(k+1)}\right\Vert _{X_{\frac{1}{2}+}^{(k)}}.
\end{eqnarray*}
\end{proof}

\subsection{A Strichartz estimate}

\begin{lemma}
\label{Lemma:TheStrichartzEstimate}Assume $\gamma ^{(k)}(t,\mathbf{x}_{k}; 
\mathbf{x}_{k}^{\prime })$ satisfies the symmetric condition %
\eqref{condition:symmetric}. Let 
\begin{equation}
\beta ^{(k)}(t,\mathbf{x}_{k};\mathbf{x}_{k}^{\prime })=V(x_{i}-x_{j})\gamma
^{(k)}(t,\mathbf{x}_{k};\mathbf{x}_{k}^{\prime })  \label{E:beta-def}
\end{equation}
Then we have the estimates: 
\begin{eqnarray}
\Vert \beta ^{(k)}\Vert _{X_{-\frac{1}{2}+}^{(k)}} &\lesssim &\Vert V\Vert
_{L_{x}^{\frac{6}{5}+}}\Vert \langle \nabla _{x_{i}}\rangle \langle \nabla
_{x_{j}}\rangle \gamma ^{(k)}\Vert _{L_{t}^{2}L_{x,x^{\prime }}^{2}},
\label{Strichartzestimate:TheDerivativeOne} \\
\Vert \beta ^{(k)}\Vert _{X_{-\frac{1}{2}+}^{(k)}} &\lesssim &\Vert V\Vert
_{L_{x}^{3+}}\Vert \gamma ^{(k)}\Vert _{L_{t}^{2}L_{x,x^{\prime }}^{2}},
\label{Strichartzestimate:TheNoDerivativeOne} \\
\Vert \beta ^{(k)}\Vert _{X_{-\frac{1}{2}+}^{(k)}} &\lesssim &\Vert V\Vert
_{L_{x}^{2+}}\Vert \langle \nabla _{x_{i}}\rangle ^{\frac{1}{2}}\gamma
^{(k)}\Vert _{L_{t}^{2}L_{x,x^{\prime }}^{2}}.
\label{Strichartzestimate:TheHalfDerivativeOne}
\end{eqnarray}
\end{lemma}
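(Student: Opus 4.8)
The plan is to obtain the three inequalities from the (endpoint) Strichartz estimate for the free doubled evolution $U^{(k)}(t)=e^{it\triangle_{\mathbf{x}_k}}e^{-it\triangle_{\mathbf{x}_k'}}$, recast as an estimate on the $X_{-\frac{1}{2}+}^{(k)}$ norm, by distributing the available space integrability between the weight $V$ and the factor $\gamma^{(k)}$ through Hölder's inequality and Sobolev embedding. Note first that $V(x_i-x_j)$ touches only the coordinates $x_i,x_j$, while $U^{(k)}$ acts on every other coordinate (and on all the primed coordinates) as an $L^2$-isometry; hence the spectator variables ride along in their $L^2$ norms and matters reduce to a three-dimensional statement in $x_i$ (and $x_j$), the set $\{x_i=x_j\}$ entering only through the translation invariance of $V(x_i-x_j)$ in $x_i$.

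The first step is to establish, for $F$ of the form $F=V(x_i-x_j)\gamma^{(k)}$, a bound of the shape
\[
\|F\|_{X_{-\frac{1}{2}+}^{(k)}}\ \lesssim\ \|F\|_{L^2_t\,L^{6/5}_{x_i}\,L^2_{\text{other coordinates}}}\,,
\]
uniformly in $k$; this is the $X_b$-dual form of the Keel--Tao endpoint estimate $\|e^{it\triangle}f\|_{L^2_tL^6_x(\mathbb{R}\times\mathbb{R}^3)}\lesssim\|f\|_{L^2_x}$ of \cite{Keel-Tao}, with the time cutoff $\theta$ of \S\ref{Sec:estimating potential part} absorbing the inhomogeneity. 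Granting it, the three estimates follow by applying Hölder in $x_i$ with $V(\,\cdot-x_j)\in L^p_{x_i}$ and then a Sobolev embedding on $\mathbb{R}^3$ for $\gamma^{(k)}$: for \eqref{Strichartzestimate:TheNoDerivativeOne} the splitting $\tfrac56=\tfrac13+\tfrac12$ places $V\in L^{3+}$ against $\gamma^{(k)}\in L^2_{x_i}$ with no derivative needed; for \eqref{Strichartzestimate:TheHalfDerivativeOne} the splitting $\tfrac56=\tfrac12+\tfrac13$ places $V\in L^{2+}$ against $\gamma^{(k)}\in L^3_{x_i}$, supplied by $H^{1/2}(\mathbb{R}^3)\hookrightarrow L^3$; and for \eqref{Strichartzestimate:TheDerivativeOne}, where $V\in L^{6/5+}$ is too singular to be absorbed into $x_i$ alone, one splits the Hölder exponent across both $x_i$ and $x_j$ (passing to $u=x_i-x_j$, $v=x_j$) and uses $H^1(\mathbb{R}^3)\hookrightarrow L^6$ twice --- precisely what the two derivatives $\langle\nabla_{x_i}\rangle\langle\nabla_{x_j}\rangle$ provide. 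The $L^2_t$ norm is untouched throughout.

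The hard part is the first step: establishing the endpoint bound at the \emph{critical} Bourgain-space regularity $b=-\tfrac12+$, i.e.\ on the hard side of the transference threshold (transference of a Strichartz estimate to the $X_b$ scale is sharp at $b=\tfrac12$). Landing on the favorable side of $-\tfrac12$ requires either a Besov-type refinement exploiting the $\varepsilon$ of room in the weight $\langle\tau+|\mathbf{\xi}_k|^2-|\mathbf{\xi}_k'|^2\rangle^{-1+2\varepsilon}$, or --- what is essentially equivalent --- a direct treatment of $F=V\gamma^{(k)}$ in Fourier variables, using that the modulation of $F$ differs from that of $\gamma^{(k)}$ by the quantity $\tfrac12(|\xi_i-\xi_j|^2-|\zeta_i-\zeta_j|^2)$ (with $\zeta$ the input frequencies of $\gamma^{(k)}$) and summing dyadic modulation/frequency pieces against that weight with the endpoint estimate as input. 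This is exactly the place where the endpoint Strichartz estimate buys the improvement over the Sobolev inequality used in \cite{Chen3DDerivation}; once it is in place, uniformly in $k$ and compatibly with the spectators and the cutoff, the Hölder/Sobolev bookkeeping of the second step is routine.
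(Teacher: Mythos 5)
Your building blocks are the right ones — a dual endpoint Strichartz estimate transferred to the $X_b$ scale, followed by H\"older in the $x_i$ variable and Sobolev embedding — and your H\"older exponent bookkeeping ($\tfrac56=\tfrac13+\tfrac12$, $\tfrac56=\tfrac12+\tfrac13$, and $\tfrac56=\tfrac56+0$ with Sobolev supplying $L^2$, $L^3$, $L^\infty$ respectively) matches the paper's. But there is a real gap in how the first step meshes with the second.

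The bound you propose as Step~1, $\|F\|_{X^{(k)}_{-\frac12+}} \lesssim \|F\|_{L^2_t L^{6/5}_{x_i} L^2_{\text{other}}}$, has the $L^2$ norm over the remaining variables (in particular $x_j$) sitting \emph{inside} the $L^{6/5}_{x_i}$ norm. With that ordering you cannot carry out the H\"older you need: $V(x_i-x_j)$ depends on $x_j$, so for each fixed $x_i$ you are looking at $\|V(x_i-\cdot)\gamma\|_{L^2_{x_j,\ldots}}$ and cannot peel $V$ off. If you instead try to flip the order to $L^2_{x_j} L^{6/5}_{x_i}$ so that H\"older in $x_i$ for fixed $x_j$ becomes possible, Minkowski goes the wrong way (it would require $6/5\geq 2$), and the corresponding stronger primal Strichartz estimate $\|\sigma\|_{L^2_t L^2_{x_j}L^6_{x_i}} \lesssim \|\sigma\|_{X_{\frac12+}}$ is not obtainable by the transference argument, because $U^{(k)}(t)$ mixes the $x_j$ variable in time and one cannot pass the $L^2_{x_j}$ norm through Plancherel into $L^2_{\xi_j} L^6_{x_i}$ and back.

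The paper's proof resolves exactly this with the translation $T$ (your $u=x_i-x_j$, $v=x_j$) \emph{combined} with a shear in Fourier space: the substitutions $\xi_j\mapsto \xi_j-\xi_i$ and $\tau\mapsto\tau-|\xi_j|^2+2\xi_i\cdot\xi_j$ transform the $X_b$ multiplier $\langle\tau+|\xi_i|^2+|\xi_j|^2-|\xi_i'|^2-|\xi_j'|^2\rangle$ into $\langle\tau+2|\xi_i|^2-|\xi_i'|^2-|\xi_j'|^2\rangle$, which \emph{no longer involves $\xi_j$}. After this, $\xi_j$ is a pure spectator that can be placed outside the $L^{6/5}_{x_i}$ norm, the potential becomes $V(x_i)$ depending on $x_i$ alone, and the H\"older/Sobolev steps go through. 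You invoke the change of variables only for \eqref{Strichartzestimate:TheDerivativeOne} (as an afterthought about splitting H\"older exponents) and say nothing about how the $X_b$ weight transforms, which is the crux; it is needed for all three estimates, and its key feature is the removal of the $\xi_j$ dependence from the modulation weight. Moreover, for \eqref{Strichartzestimate:TheDerivativeOne} the paper does not split $H^1\hookrightarrow L^6$ across $u$ and $v$; it applies $H^2_{x_1}\hookrightarrow L^\infty_{x_1}$ once and then uses the frequency dichotomy $|\xi_2|\geq|\xi_1|$ to convert $\langle\nabla_{x_1}\rangle^2$ into $\langle\nabla_{x_1}\rangle\langle\nabla_{x_2}\rangle$; your proposed split would instead produce $\langle\nabla_u\rangle\langle\nabla_v\rangle$, which is not comparable to $\langle\nabla_{x_1}\rangle\langle\nabla_{x_2}\rangle$ since $\nabla_v=\nabla_{x_1}+\nabla_{x_2}$. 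Finally, the part of your argument about needing ``a Besov-type refinement'' or dyadic modulation summation to land at $b=-\tfrac12+$ overcomplicates the matter: the paper simply interpolates the transference Strichartz at $b=\tfrac12+$ with the trivial $b=0$ identity to get the primal estimate at $b=\tfrac12-$, then dualizes.
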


\begin{proof}
It suffices to prove Lemma \ref{Lemma:TheStrichartzEstimate} for $k=2$.
Since we will need to deal with Fourier transforms in only selected
coordinates, we introduce the following notation: $\mathcal{F}_{0}$ denotes
Fourier transform in $t$, $\mathcal{F}_{j}$ denotes Fourier transform in $%
x_{j}$, and $\mathcal{F}_{j^{\prime }}$ denotes Fourier transform in $%
x_{j}^{\prime }$. Fourier transforms in multiple coordinates will denoted as
combined subscripts -- for example, $\mathcal{F}_{01^{\prime }}=\mathcal{F}%
_{0}\mathcal{F}_{1^{\prime }}$ denotes the Fourier transform in $t$ and $%
x_{1}^{\prime }$.\footnote{%
We are going to apply the endpoint Strichartz estimate on the
non-transformed coordinates. We do not know the origin of such a technique,
although it was also used by the first author in \cite[Lemma 6]{Chen2ndOrder}%
.}

We start by splitting $\gamma ^{(2)}$ into the piece where $\left\vert \xi
_{1}\right\vert \geqslant \left\vert \xi _{2}\right\vert $ and the piece
where $\left\vert \xi _{2}\right\vert \geqslant \left\vert \xi
_{1}\right\vert $: 
\begin{equation*}
\gamma ^{(2)}=\gamma _{\left\vert \xi _{1}\right\vert \geqslant \left\vert
\xi _{2}\right\vert }^{(2)}+\gamma _{\left\vert \xi _{2}\right\vert
\geqslant \left\vert \xi _{1}\right\vert }^{(2)}.
\end{equation*}%
The below represents the treatment of 
\begin{equation*}
\beta _{\left\vert \xi _{2}\right\vert \geqslant \left\vert \xi
_{1}\right\vert }^{(2)}=V(x_{1}-x_{2})\gamma _{\left\vert \xi
_{2}\right\vert \geqslant \left\vert \xi _{1}\right\vert }^{(2)}
\end{equation*}%
since the $\left\vert \xi _{1}\right\vert \geqslant \left\vert \xi
_{2}\right\vert $ case is similar. Let $T$ denote the translation operator 
\begin{equation*}
(Tf)(x_{1},x_{2})=f(x_{1}+x_{2},x_{2})
\end{equation*}%
Suppressing the $x_{1}^{\prime }$, $x_{2}^{\prime }$ dependence, we have 
\begin{equation}
(\mathcal{F}_{12}T\beta _{\left\vert \xi _{2}\right\vert \geqslant
\left\vert \xi _{1}\right\vert }^{(2)})(t,\xi _{1},\xi _{2})=(\mathcal{F}%
_{12}\beta _{\left\vert \xi _{2}\right\vert \geqslant \left\vert \xi
_{1}\right\vert }^{(2)})(t,\xi _{1},\xi _{2}-\xi _{1})  \label{E:Fourier1}
\end{equation}%
Also 
\begin{equation}
e^{-2it\xi _{1}\cdot \xi _{2}}(\mathcal{F}_{12}T\beta _{\left\vert \xi
_{2}\right\vert \geqslant \left\vert \xi _{1}\right\vert }^{(2)})(t,\xi
_{1},\xi _{2})=\mathcal{F}_{1}\big[(\mathcal{F}_{2}T\beta _{\left\vert \xi
_{2}\right\vert \geqslant \left\vert \xi _{1}\right\vert
}^{(2)})(t,x_{1}-2t\xi _{2},\xi _{2})\big](\xi _{1})  \label{E:Fourier2}
\end{equation}%
Now 
\begin{equation}
\begin{aligned} \hspace{0.3in}&\hspace{-0.3in} (\mathcal{F}_{012}\beta
_{\left\vert \xi _{2}\right\vert \geqslant \left\vert \xi _{1}\right\vert
}^{(2)})(\tau - |\xi_2|^2 + 2\xi_1\cdot\xi_2, \xi_1,\xi_2-\xi_1) \\ &=
(\mathcal{F}_{012}T\beta _{\left\vert \xi _{2}\right\vert \geqslant
\left\vert \xi _{1}\right\vert }^{(2)})(\tau - |\xi_2|^2 + 2\xi_1\cdot\xi_2,
\xi_1,\xi_2) && \text{by }\eqref{E:Fourier1}\\ &= \mathcal{F}_0\big[
e^{it|\xi_2|^2} e^{-2it\xi_1\cdot\xi_2} (\mathcal{F}_{12}T\beta _{\left\vert
\xi _{2}\right\vert \geqslant \left\vert \xi _{1}\right\vert
}^{(2)})(t,\xi_1,\xi_2)\big](\tau) \\ &= \mathcal{F}_0\big[ e^{it|\xi_2|^2}
\mathcal{F}_1 \big[ (\mathcal{F}_2T\beta _{\left\vert \xi _{2}\right\vert
\geqslant \left\vert \xi _{1}\right\vert }^{(2)})(t, x_1-2t\xi_2, \xi_2)
\big](\xi_1) \big](\tau) && \text{by } \eqref{E:Fourier2} \\ &=
\mathcal{F}_{01} \big[ e^{it|\xi_2|^2}(\mathcal{F}_2T\beta _{\left\vert \xi
_{2}\right\vert \geqslant \left\vert \xi _{1}\right\vert }^{(2)})(t,
x_1-2t\xi_2, \xi_2) \big] (\tau,\xi_1) \end{aligned}  \label{E:Fourier3}
\end{equation}%
By changing variables $\xi _{2}\mapsto \xi _{2}-\xi _{1}$ and then changing $%
\tau \mapsto \tau -|\xi _{2}|^{2}+2\xi _{1}\cdot \xi _{2}$, we obtain 
\begin{align*}
& \Vert \beta _{\left\vert \xi _{2}\right\vert \geqslant \left\vert \xi
_{1}\right\vert }^{(2)}\Vert _{X_{-\frac{1}{2}+}^{(2)}} \\
& =\Vert \left( \beta _{\left\vert \xi _{2}\right\vert \geqslant \left\vert
\xi _{1}\right\vert }^{(2)}\right) \widehat{\;}(\tau ,\xi _{1},\xi _{2},\xi
_{1}^{\prime },\xi _{2}^{\prime })\langle \tau +\left\vert \xi
_{1}\right\vert ^{2}+\left\vert \xi _{2}\right\vert ^{2}-\left\vert \xi
_{1}^{\prime }\right\vert ^{2}-\left\vert \xi _{2}^{\prime }\right\vert
^{2}\rangle ^{-\frac{1}{2}+}\Vert _{L_{\tau \xi _{1}\xi _{2}\xi _{1}^{\prime
}\xi _{2}^{\prime }}^{2}} \\
& =\Big\| \left( \beta _{\left\vert \xi _{2}\right\vert \geqslant \left\vert
\xi _{1}\right\vert }^{(2)}\right) \widehat{\;}(\tau -|\xi _{2}|^{2}+2\xi
_{1}\cdot \xi _{2},\xi _{1},\xi _{2}-\xi _{1},\xi _{1}^{\prime },\xi
_{2}^{\prime })\\
& \qquad \qquad \times \langle \tau +2\left\vert \xi _{1}\right\vert ^{2}-\left\vert
\xi _{1}^{\prime }\right\vert ^{2}-\left\vert \xi _{2}^{\prime }\right\vert
^{2}\rangle ^{-\frac{1}{2}+}\Big\|_{L_{\tau \xi _{1}\xi _{2}\xi _{1}^{\prime
}\xi _{2}^{\prime }}^{2}}
\end{align*}%
Applying the the dual Strichartz (see \eqref{E:dual-Strichartz} below), the
above is bounded by 
\begin{equation*}
\lesssim \Vert \mathcal{F}_{01}^{-1}\Big[(\mathcal{F}_{012}\beta
_{\left\vert \xi _{2}\right\vert \geqslant \left\vert \xi _{1}\right\vert
}^{(2)})(\tau -|\xi _{2}|^{2}+2\xi _{1}\cdot \xi _{2},\xi _{1},\xi _{2}-\xi
_{1})\Big](t,x_{1})\Vert _{L_{\xi _{2}}^{2}L_{t}^{2}L_{x_{1}}^{\frac{6}{5}%
+}L_{x_{1}^{\prime }x_{2}^{\prime }}^{2}}
\end{equation*}%
Utilizing \eqref{E:Fourier3}, the above is equal to 
\begin{equation*}
=\Vert (\mathcal{F}_{2}T\beta _{\left\vert \xi _{2}\right\vert \geqslant
\left\vert \xi _{1}\right\vert }^{(2)})(t,x_{1}-2t\xi _{2},\xi _{2})\Vert
_{L_{t}^{2}L_{\xi _{2}}^{2}L_{x_{1}}^{\frac{6}{5}+}L_{x_{1}^{\prime
}x_{2}^{\prime }}^{2}}
\end{equation*}%
Change variable in $x_{1}\mapsto x_{1}+2t\xi _{2}$ to obtain 
\begin{equation*}
=\Vert (\mathcal{F}_{2}T\beta _{\left\vert \xi _{2}\right\vert \geqslant
\left\vert \xi _{1}\right\vert }^{(2)})(t,x_{1},\xi _{2})\Vert
_{L_{t}^{2}L_{\xi _{2}}^{2}L_{x_{1}}^{\frac{6}{5}+}L_{x_{1}^{\prime
}x_{2}^{\prime 2}}^{2}}
\end{equation*}%
Now note that from \eqref{E:beta-def}, we have 
\begin{equation*}
(\mathcal{F}_{2}T\beta _{\left\vert \xi _{2}\right\vert \geqslant \left\vert
\xi _{1}\right\vert }^{(2)})(t,x_{1},\xi _{2})=V(x_{1})(\mathcal{F}%
_{2}T\gamma _{\left\vert \xi _{2}\right\vert \geqslant \left\vert \xi
_{1}\right\vert }^{(2)})(t,x_{1},\xi _{2})
\end{equation*}%
It follows that 
\begin{align}
\hspace{0.3in}& \hspace{-0.3in}\Vert (\mathcal{F}_{2}T\beta _{\left\vert \xi
_{2}\right\vert \geqslant \left\vert \xi _{1}\right\vert
}^{(2)})(t,x_{1},\xi _{2})\Vert _{L_{t}^{2}L_{\xi _{2}}^{2}L_{x_{1}}^{\frac{6%
}{5}+}L_{x_{1}^{\prime }x_{2}^{\prime }}^{2}}  \notag \\
& =\left\Vert V(x_{1})\Big(\Vert (\mathcal{F}_{2}T\gamma _{\left\vert \xi
_{2}\right\vert \geqslant \left\vert \xi _{1}\right\vert
}^{(2)})(t,x_{1},\xi _{2})\Vert _{L_{x_{1}^{\prime }x_{2}^{\prime }}^{2}}%
\Big)\right\Vert _{L_{t}^{2}L_{\xi _{2}}^{2}L_{x_{1}}^{\frac{6}{5}+}}  \notag
\\
& \leq \Vert V\Vert _{L^{\frac{6}{5}+}}\Vert (\mathcal{F}_{2}T\gamma
_{\left\vert \xi _{2}\right\vert \geqslant \left\vert \xi _{1}\right\vert
}^{(2)})(t,x_{1},\xi _{2})\Vert _{L_{t}^{2}L_{\xi _{2}}^{2}L_{x_{1}}^{\infty
}L_{x_{1}^{\prime }x_{2}^{\prime }}^{2}}  \label{Strichartz:DifferentHolder}
\\
& \leq \Vert V\Vert _{L^{\frac{6}{5}+}}\Vert (\mathcal{F}_{2}T\gamma
_{\left\vert \xi _{2}\right\vert \geqslant \left\vert \xi _{1}\right\vert
}^{(2)})(t,x_{1},\xi _{2})\Vert _{L_{t}^{2}L_{\xi _{2}x_{1}^{\prime
}x_{2}^{\prime }}^{2}L_{x_{1}}^{\infty }}  \notag
\end{align}%
Apply Sobolev in $x_{1}$: 
\begin{equation*}
\lesssim \Vert V\Vert _{L^{\frac{6}{5}+}}\Vert \langle \nabla
_{x_{1}}\rangle ^{2}(\mathcal{F}_{2}T\gamma _{\left\vert \xi _{2}\right\vert
\geqslant \left\vert \xi _{1}\right\vert }^{(2)})(t,x_{1},\xi _{2})\Vert
_{L_{t}^{2}L_{\xi _{2}x_{1}^{\prime }x_{2}^{\prime }}^{2}L_{x_{1}}^{2}}
\end{equation*}%
Move the $d\xi _{2}dx_{1}^{\prime }dx_{2}^{\prime }$ integration to the
inside and apply Plancherel in $\xi _{2}\mapsto x_{2}$ to obtain 
\begin{eqnarray*}
&=&\Vert V\Vert _{L^{\frac{6}{5}+}}\Vert \langle \nabla _{x_{1}}\rangle
^{2}T\gamma _{\left\vert \xi _{2}\right\vert \geqslant \left\vert \xi
_{1}\right\vert }^{(2)}\Vert _{L_{t}^{2}L_{\mathbf{x,x}^{\prime }}^{2}} \\
&=&\Vert V\Vert _{L^{\frac{6}{5}+}}\Vert \langle \nabla _{x_{1}}\rangle
^{2}\gamma _{\left\vert \xi _{2}\right\vert \geqslant \left\vert \xi
_{1}\right\vert }^{(2)}\Vert _{L_{t}^{2}L_{\mathbf{x,x}^{\prime }}^{2}}.
\end{eqnarray*}%
Recall that the $\xi _{2}$ frequency dominates in $\gamma _{\left\vert \xi
_{2}\right\vert \geqslant \left\vert \xi _{1}\right\vert }^{(2)}$, and thus
this is bounded above by 
\begin{eqnarray*}
&\lesssim &\Vert V\Vert _{L^{\frac{6}{5}+}}\Vert \langle \nabla
_{x_{1}}\rangle \langle \nabla _{x_{2}}\rangle \gamma _{\left\vert \xi
_{2}\right\vert \geqslant \left\vert \xi _{1}\right\vert }^{(2)}(t,\mathbf{x}%
_{2},\mathbf{x}_{2}^{\prime })\Vert _{L_{t}^{2}L_{\mathbf{x,x}^{\prime
}}^{2}} \\
&\lesssim &\Vert V\Vert _{L^{\frac{6}{5}+}}\Vert \langle \nabla
_{x_{1}}\rangle \langle \nabla _{x_{2}}\rangle \gamma ^{(2)}(t,\mathbf{x}%
_{2},\mathbf{x}_{2}^{\prime })\Vert _{L_{t}^{2}L_{\mathbf{x,x}^{\prime
}}^{2}}.
\end{eqnarray*}%
This proves estimate (\ref{Strichartzestimate:TheDerivativeOne}). Using H%
\"{o}lder exponents $(3$+$,2,\frac{6}{5}$+$)$ and $(2$+$,3,\frac{6}{5}$+$)$
in (\ref{Strichartz:DifferentHolder}) yields estimates (\ref%
{Strichartzestimate:TheNoDerivativeOne}) and (\ref%
{Strichartzestimate:TheHalfDerivativeOne}). Their proofs are easier in the
sense that there is no need to split $\gamma ^{(2)}.$

It remains to prove the following dual Strichartz estimate (here $\sigma
^{(2)}(t,x_{1},x_{1}^{\prime },x_{2}^{\prime })$, note that the $x_{2}$
coordinate is missing): 
\begin{equation}
\Vert \langle \tau +2\left\vert \xi _{1}\right\vert ^{2}-\left\vert \xi
_{1}^{\prime }\right\vert ^{2}-\left\vert \xi _{2}^{\prime }\right\vert
^{2}\rangle ^{-\frac{1}{2}+}\hat{\sigma}^{(2)}(\tau ,\xi _{1},\xi
_{1}^{\prime },\xi _{2}^{\prime })\Vert _{L_{\tau }^{2}L_{\xi _{1}\xi
_{1}^{\prime }\xi _{2}^{\prime }}^{2}}\lesssim \Vert \sigma ^{(2)}\Vert
_{L_{t}^{2}L_{x_{1}}^{\frac{6}{5}+}L_{x_{1}^{\prime }x_{2}^{\prime }}^{2}}
\label{E:dual-Strichartz}
\end{equation}%
The estimate \eqref{E:dual-Strichartz} is dual to the equivalent estimate 
\begin{equation}
\Vert \sigma ^{(2)}\Vert _{L_{t}^{2}L_{x_{1}}^{6-}L_{x_{1}^{\prime
}x_{2}^{\prime }}^{2}}\lesssim \Vert \langle \tau +2\left\vert \xi
_{1}\right\vert ^{2}-\left\vert \xi _{1}^{\prime }\right\vert
^{2}-\left\vert \xi _{2}^{\prime }\right\vert ^{2}\rangle ^{\frac{1}{2}-}%
\hat{\sigma}^{(2)}(\tau ,\xi _{1},\xi _{1}^{\prime },\xi _{2}^{\prime
})\Vert _{L_{\tau }^{2}L_{\xi _{1}\xi _{1}^{\prime }\xi _{2}^{\prime }}^{2}}
\label{E:StrichartzA}
\end{equation}%
To prove \eqref{E:StrichartzA}, we prove 
\begin{equation}
\Vert \sigma ^{(2)}\Vert _{L_{t}^{2}L_{x_{1}}^{6}L_{x_{1}^{\prime
}x_{2}^{\prime }}^{2}}\lesssim \Vert \langle \tau +2\left\vert \xi
_{1}\right\vert ^{2}-\left\vert \xi _{1}^{\prime }\right\vert
^{2}-\left\vert \xi _{2}^{\prime }\right\vert ^{2}\rangle ^{\frac{1}{2}+}%
\hat{\sigma}^{(2)}(\tau ,\xi _{1},\xi _{1}^{\prime },\xi _{2}^{\prime
})\Vert _{L_{\tau }^{2}L_{\xi _{1}\xi _{1}^{\prime }\xi _{2}^{\prime }}^{2}}
\label{E:StrichartzB}
\end{equation}%
The estimate \eqref{E:StrichartzA} follows from the interpolation of %
\eqref{E:StrichartzB} and the trivial equality 
\begin{equation*}
\Vert \sigma ^{(2)}\Vert _{L_{t}^{2}L_{x_{1}}^{2}L_{x_{1}^{\prime
}x_{2}^{\prime }}^{2}}=\Vert \langle \tau +2\left\vert \xi _{1}\right\vert
^{2}-\left\vert \xi _{1}^{\prime }\right\vert ^{2}-\left\vert \xi
_{2}^{\prime }\right\vert ^{2}\rangle ^{0}\hat{\sigma}^{(2)}(\tau ,\xi
_{1},\xi _{1}^{\prime },\xi _{2}^{\prime })\Vert _{L_{\tau }^{2}L_{\xi
_{1}\xi _{1}^{\prime }\xi _{2}^{\prime }}^{2}}
\end{equation*}%
Thus proving \eqref{E:dual-Strichartz} is reduced to proving %
\eqref{E:StrichartzB}, which we do now. Let 
\begin{equation}
\phi _{\tau }(x_{1},x_{1}^{\prime },x_{2})\overset{\mathrm{def}}{=}\mathcal{F%
}_{0}[U^{1}(-2t)U^{1^{\prime }}(-t)U^{2^{\prime }}(-t)\sigma
^{(2)}(t,x_{1},x_{1}^{\prime },x_{2}^{\prime })](\tau )  \label{E:phi-def}
\end{equation}%
Then note $\phi _{\tau }$ is independent of $t$ and 
\begin{equation*}
\sigma ^{(2)}(t,x_{1},x_{1}^{\prime },x_{2}^{\prime })=\int e^{it\tau
}U^{1}(2t)U^{1^{\prime }}(t)U^{2^{\prime }}(t)\phi _{\tau
}(x_{1},x_{1}^{\prime },x_{2}^{\prime })d\tau
\end{equation*}%
Thus 
\begin{align*}
\Vert \sigma ^{(2)}\Vert _{L_{t}^{2}L_{x_{1}}^{6}L_{x_{1}^{\prime
}x_{2}^{\prime }}^{2}}& \lesssim \int_{\tau }\Vert U^{1^{\prime
}}(t)U^{2^{\prime }}(t)U^{1}(2t)\phi _{\tau }(x_{1},x_{1}^{\prime
},x_{2}^{\prime })\Vert _{L_{t}^{2}L_{x_{1}}^{6}L_{x_{1}^{\prime
}x_{2}^{\prime }}^{2}}\,d\tau \\
& \lesssim \int_{\tau }\Vert U^{1}(2t)\phi _{\tau }(x_{1},x_{1}^{\prime
},x_{2}^{\prime })\Vert _{L_{t}^{2}L_{x_{1}}^{6}L_{x_{1}^{\prime
}x_{2}^{\prime }}^{2}}\,d\tau \\
& \lesssim \int_{\tau }\Vert U^{1}(2t)\phi _{\tau }(x_{1},x_{1}^{\prime
},x_{2}^{\prime })\Vert _{L_{x_{1}^{\prime }x_{2}^{\prime
}}^{2}L_{t}^{2}L_{x_{1}}^{6}}\,d\tau
\end{align*}%
Now apply Keel-Tao \cite{Keel-Tao} endpoint Strichartz estimate to obtain 
\begin{align*}
& \lesssim \int_{\tau }\Vert \phi _{\tau }(x_{1},x_{1}^{\prime
},x_{2}^{\prime })\Vert _{L_{x_{1}^{\prime }x_{2}^{\prime
}}^{2}L_{x_{1}}^{2}}\,d\tau \\
& \lesssim \Vert \langle \tau \rangle ^{\frac{1}{2}+}\phi _{\tau
}(x_{1},x_{1}^{\prime },x_{2}^{\prime })\Vert _{L_{\tau
}^{2}L_{x_{1}x_{1}^{\prime }x_{2}^{\prime }}^{2}}
\end{align*}%
It follows from \eqref{E:phi-def} that 
\begin{equation*}
=\Vert \langle \tau +2\left\vert \xi _{1}\right\vert ^{2}-\left\vert \xi
_{1}^{\prime }\right\vert ^{2}-\left\vert \xi _{2}^{\prime }\right\vert
^{2}\rangle ^{\frac{1}{2}+}\hat{\sigma}^{(2)}(\tau ,\xi _{1},\xi
_{1}^{\prime },\xi _{2}^{\prime })\Vert _{L_{\tau \xi _{1}\xi _{1}^{\prime
}\xi _{2}^{\prime }}^{2}}
\end{equation*}%
which completes the proof of \eqref{E:StrichartzB}.
\end{proof}

\begin{corollary}
Let 
\begin{equation*}
\beta ^{(k)}(t,\mathbf{x}_{k},\mathbf{x}_{k}^{\prime })=N^{3\beta
-1}V(N^{\beta }(x_{i}-x_{j}))\gamma ^{(k)}(t,\mathbf{x}_{k},\mathbf{x}%
_{k}^{\prime })
\end{equation*}%
Then for $N\geq 1$, we have 
\begin{equation}
\Vert \left\vert \nabla _{x_{i}}\right\vert \left\vert \nabla
_{x_{j}}\right\vert \beta ^{(k)}\Vert _{X_{-\frac{1}{2}+}^{(k)}}\lesssim N^{%
\frac{5}{2}\beta -1}\Vert \langle \nabla _{x_{i}}\rangle \langle \nabla
_{x_{j}}\rangle \gamma ^{(k)}\Vert _{L_{t}^{2}L_{\mathbf{x}\mathbf{x}%
^{\prime }}^{2}}  \label{E:StrCor1}
\end{equation}%
and 
\begin{equation}
\Vert \beta ^{(k)}\Vert _{X_{-\frac{1}{2}+}^{(k)}}\lesssim N^{\frac{1}{2}%
\beta -1}\Vert \langle \nabla _{x_{i}}\rangle \langle \nabla _{x_{j}}\rangle
\gamma ^{(k)}\Vert _{L_{t}^{2}L_{\mathbf{x}\mathbf{x}^{\prime }}^{2}}
\label{E:StrCor2}
\end{equation}%
Consequently, ($R_{\leq M}^{(k)}=P_{\leq M}^{(k)}R^{(k)}$) 
\begin{equation}
\Vert R_{\leq M}^{(k)}\beta ^{(k)}\Vert _{X_{-\frac{1}{2}+}^{(k)}}\lesssim
N^{\frac{1}{2}\beta -1}\min (M^{2},N^{2\beta })\Vert S^{(k)}\gamma
^{(k)}\Vert _{L_{t}^{2}L_{\mathbf{x}\mathbf{x}^{\prime }}^{2}}
\label{E:StrCor3}
\end{equation}
\end{corollary}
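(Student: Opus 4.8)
The plan is to obtain all three estimates from the Strichartz estimates of Lemma~\ref{Lemma:TheStrichartzEstimate} by tracking the scaling of $V_N(x)=N^{3\beta}V(N^\beta x)$. Throughout, write $\beta^{(k)}=N^{-1}V_N(x_i-x_j)\gamma^{(k)}$ and recall the scaling relation $N^{-1}\|\partial^aV_N\|_{L^p}=N^{a\beta+3\beta(1-1/p)-1}\|\partial^aV\|_{L^p}$ for $a=0,1,2$, together with the Sobolev embeddings (in $\mathbb R^3$) $W^{2,\frac65+}\hookrightarrow L^{6+}$ and $W^{1,\frac65+}\hookrightarrow L^{2+}$, which, with the hypothesis $V\in W^{2,\frac65+}$, give $V\in L^p$ for $p\in[\tfrac65+,6+]$, $\nabla V\in L^p$ for $p\in[\tfrac65+,2+]$, and $\nabla^2V\in L^{\frac65+}$. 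Since $N\ge1$ it suffices, in each term below, to produce a power of $N$ at most $\tfrac52\beta-1$ (resp.\ $\tfrac12\beta-1$); the various ``$+$''s in Lebesgue exponents are coordinated so that the attendant scaling losses are $N^{0+}$ and are absorbed by the $\varepsilon$-margin in the final summation of the potential part (Step~IV).

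For \eqref{E:StrCor2}, I would apply \eqref{Strichartzestimate:TheDerivativeOne} with potential $N^{-1}V_N$: $\|\beta^{(k)}\|_{X^{(k)}_{-\frac12+}}\lesssim N^{-1}\|V_N\|_{L^{6/5+}}\|\langle\nabla_{x_i}\rangle\langle\nabla_{x_j}\rangle\gamma^{(k)}\|_{L^2_tL^2}$, and $N^{-1}\|V_N\|_{L^{6/5+}}\lesssim N^{\frac12\beta-1}$ by the scaling relation ($a=0$, $p=\tfrac65$). For \eqref{E:StrCor1}, I would distribute the two derivatives onto the product: writing $|\nabla_{x_i}|=\sum_\ell R_{x_i,\ell}\partial_{x_i,\ell}$ with Riesz transforms $R_{x_i,\ell}$ (Fourier multipliers of modulus $\le1$, hence bounded on $X^{(k)}_b$) and using the Leibniz rule, one controls $\||\nabla_{x_i}||\nabla_{x_j}|\beta^{(k)}\|_{X^{(k)}_{-\frac12+}}$ by a sum of at most four terms $N^{-1}\|(\partial^aV_N)(x_i-x_j)\,\partial^{b_i}_{x_i}\partial^{b_j}_{x_j}\gamma^{(k)}\|_{X^{(k)}_{-\frac12+}}$ with $a+b_i+b_j=2$ and $b_i,b_j\in\{0,1\}$. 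Bound the $a=0$ term by \eqref{Strichartzestimate:TheNoDerivativeOne} (so $N^{-1}\|V_N\|_{L^{3+}}\lesssim N^{2\beta-1}\le N^{\frac52\beta-1}$), an $a=1$ term — say with the surviving $\gamma$-derivative on $x_j$ — by \eqref{Strichartzestimate:TheHalfDerivativeOne} \emph{with the half-derivative routed onto $x_i$} (legitimate since the proof of Lemma~\ref{Lemma:TheStrichartzEstimate} is symmetric in the two interacting slots), giving $\lesssim N^{-1}\|\nabla V_N\|_{L^{2+}}\|\langle\nabla_{x_i}\rangle^{1/2}|\nabla_{x_j}|\gamma^{(k)}\|_{L^2_tL^2}$ with $N^{-1}\|\nabla V_N\|_{L^{2+}}\lesssim N^{\frac52\beta-1}$, and the $a=2$ term by \eqref{Strichartzestimate:TheDerivativeOne} (so $N^{-1}\|\nabla^2V_N\|_{L^{6/5+}}\lesssim N^{\frac52\beta-1}$). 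As $\langle\nabla_{x_i}\rangle^{1/2}|\nabla_{x_j}|\le\langle\nabla_{x_i}\rangle\langle\nabla_{x_j}\rangle$, etc., on the Fourier side, every term is $\lesssim N^{\frac52\beta-1}\|\langle\nabla_{x_i}\rangle\langle\nabla_{x_j}\rangle\gamma^{(k)}\|_{L^2_tL^2}$, which is \eqref{E:StrCor1}; note it is the $a=2$ term that uses the full hypothesis $V\in W^{2,\frac65+}$.

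For \eqref{E:StrCor3}, fix the interacting pair $(i,j)$. Each factor of $R^{(k)}=\prod_l|\nabla_{x_l}||\nabla_{x_l'}|$ other than $|\nabla_{x_i}|,|\nabla_{x_j}|$, and each of $P^l_{\le M}$ ($l\ne i,j$), $P^{i'}_{\le M}$, $P^{j'}_{\le M}$, commutes with multiplication by $V_N(x_i-x_j)$. If $M\ge N^\beta$: drop the bounded projection $P^{(k)}_{\le M}$, commute the non-$(i,j)$ factors of $R^{(k)}$ through $V_N(x_i-x_j)$, and apply \eqref{E:StrCor1} with $\gamma^{(k)}$ replaced by $\widetilde\gamma^{(k)}=|\nabla_{x_i'}||\nabla_{x_j'}|\prod_{l\ne i,j}|\nabla_{x_l}||\nabla_{x_l'}|\gamma^{(k)}$; since $\|\langle\nabla_{x_i}\rangle\langle\nabla_{x_j}\rangle\widetilde\gamma^{(k)}\|_{L^2}\le\|S^{(k)}\gamma^{(k)}\|_{L^2}$ (pointwise domination of the Fourier multipliers involved), this gives $\lesssim N^{\frac52\beta-1}\|S^{(k)}\gamma^{(k)}\|_{L^2_tL^2}=N^{\frac12\beta-1}N^{2\beta}\|S^{(k)}\gamma^{(k)}\|_{L^2_tL^2}$, i.e.\ the claim with $\min(M^2,N^{2\beta})=N^{2\beta}$. (The proof of Lemma~\ref{Lemma:TheStrichartzEstimate}, hence \eqref{E:StrCor1}, does not invoke the symmetry \eqref{condition:symmetric}, so applying it to $\widetilde\gamma^{(k)}$ is allowed.) If $M\le N^\beta$: write $R^{(k)}_{\le M}=\big(P^i_{\le M}|\nabla_{x_i}|\big)\big(P^j_{\le M}|\nabla_{x_j}|\big)\mathcal{Q}$, where $\mathcal{Q}$ gathers the remaining $P_{\le M}|\nabla|$ factors and commutes with $V_N(x_i-x_j)$; the multipliers $P^i_{\le M}|\nabla_{x_i}|$, $P^j_{\le M}|\nabla_{x_j}|$ have Fourier symbols of sup-norm $\lesssim M$, hence bound $X^{(k)}_{-\frac12+}\to X^{(k)}_{-\frac12+}$ by $\lesssim M$; peeling these off and applying \eqref{E:StrCor2} to $N^{-1}V_N(x_i-x_j)\mathcal{Q}\gamma^{(k)}$ yields $\lesssim M^2N^{\frac12\beta-1}\|\langle\nabla_{x_i}\rangle\langle\nabla_{x_j}\rangle\mathcal{Q}\gamma^{(k)}\|_{L^2_tL^2}\le M^2N^{\frac12\beta-1}\|S^{(k)}\gamma^{(k)}\|_{L^2_tL^2}$, i.e.\ the claim with $\min(M^2,N^{2\beta})=M^2$. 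Combining the two regimes gives \eqref{E:StrCor3}.

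The analytic content is all carried by Lemma~\ref{Lemma:TheStrichartzEstimate}, so I expect no serious obstacle. The two spots needing attention are: in each $a=1$ term, the half-derivative of \eqref{Strichartzestimate:TheHalfDerivativeOne} must be routed onto the interacting slot \emph{opposite} to the surviving $\gamma$-derivative (otherwise a $\tfrac32$-derivative appears, exceeding $\langle\nabla_{x_i}\rangle\langle\nabla_{x_j}\rangle$), and one must check that $W^{2,\frac65+}$ is exactly the regularity that pays for the worst ($a=2$) term at scaling cost $N^{\frac52\beta-1}$; and, for \eqref{E:StrCor3}, the structural observation that the Littlewood--Paley and derivative factors not attached to $(i,j)$ commute through $V_N(x_i-x_j)$, reducing everything to \eqref{E:StrCor1}--\eqref{E:StrCor2}, with the two projections on the interacting slots supplying the decisive improvement $\min(M^2,N^{2\beta})$ over the frequency-free bound $N^{2\beta}$ — the improvement responsible for reaching $\beta<\tfrac23$ rather than $\beta<\tfrac25$.
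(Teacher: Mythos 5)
Your argument reproduces the paper's: \eqref{E:StrCor1} and \eqref{E:StrCor2} come from Leibniz on $V_N\cdot\gamma^{(k)}$ combined with the three cases of Lemma~\ref{Lemma:TheStrichartzEstimate} and the scaling of $N^{-1}\Vert\partial^a V_N\Vert_{L^p}$, while \eqref{E:StrCor3} is obtained by commuting the non-$(i,j)$ part of $R^{(k)}_{\le M}$ through $V_N(x_i-x_j)$ onto $\gamma^{(k)}$ (dominated by $S^{(k)}\gamma^{(k)}$) and bounding the two remaining $(i,j)$-slot multipliers either by $M$ each and invoking \eqref{E:StrCor2}, or by $1$ and invoking \eqref{E:StrCor1}, then taking the minimum. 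Your explicit Riesz-transform bookkeeping, the remark on routing the half-derivative to the slot opposite the surviving $\gamma$-derivative, and the split into the regimes $M\lessgtr N^\beta$ are just more detailed write-ups of the same computation.
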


\begin{proof}
Estimate \eqref{E:StrCor1} follows by applying either %
\eqref{Strichartzestimate:TheDerivativeOne}, %
\eqref{Strichartzestimate:TheNoDerivativeOne}, or %
\eqref{Strichartzestimate:TheHalfDerivativeOne} according to whether two
derivatives, no derivatives, or one derivative, respectively, lands on $%
N^{3\beta-1}V(N^\beta(x_i-x_j))$.

Estimate \eqref{E:StrCor2} follows by applying %
\eqref{Strichartzestimate:TheDerivativeOne}.

Finally, \eqref{E:StrCor3} follows from \eqref{E:StrCor1} and %
\eqref{E:StrCor2}, as follows. Let 
\begin{equation*}
Q=\prod_{\substack{ 1\leq \ell \leq k  \\ \ell \neq i,j}}\left\vert \nabla
_{x_{\ell }}\right\vert
\end{equation*}%
Then 
\begin{equation*}
\Vert R_{\leq M}^{(k)}\beta ^{(k)}\Vert _{X_{-\frac{1}{2}+}^{(k)}}\leq
M^{2}\Vert Q\beta ^{(k)}\Vert _{X_{-\frac{1}{2}+}^{(k)}}
\end{equation*}%
The $Q$ operator passes directly onto $\gamma ^{(k)}$, and one applies %
\eqref{E:StrCor2} to obtain 
\begin{equation}
\Vert R_{\leq M}^{(k)}\beta ^{(k)}\Vert _{X_{-\frac{1}{2}+}^{(k)}}\lesssim
N^{\frac{1}{2}\beta -1}M^{2}\Vert S^{(k)}\gamma ^{(k)}\Vert _{L_{t}^{2}L_{%
\mathbf{x}\mathbf{x}^{\prime }}^{2}}  \label{E:StrCor4}
\end{equation}%
On the other hand, 
\begin{equation*}
\Vert R_{\leq M}^{(k)}\beta ^{(k)}\Vert _{X_{-\frac{1}{2}+}^{(k)}}\leq \Vert
Q\left\vert \nabla _{x_{i}}\right\vert \left\vert \nabla _{x_{j}}\right\vert
\beta ^{(k)}\Vert _{X_{-\frac{1}{2}+}^{(k)}}
\end{equation*}%
The $Q$ operator passes directly on $\gamma ^{(k)}$, and one applies %
\eqref{E:StrCor1} to obtain 
\begin{equation}
\Vert R_{\leq M}^{(k)}\beta ^{(k)}\Vert _{X_{-\frac{1}{2}+}^{(k)}}\lesssim
N^{\frac{5}{2}\beta -1}\Vert S^{(k)}\gamma ^{(k)}\Vert _{L_{t}^{2}L_{\mathbf{%
x}\mathbf{x}^{\prime }}^{2}}  \label{E:StrCor5}
\end{equation}%
Combining \eqref{E:StrCor4} and \eqref{E:StrCor5}, we obtain %
\eqref{E:StrCor3}.
\end{proof}



\appendix

\section{The topology on the density matrices\label{appendix:ESYTopology}}

In this appendix, we define a topology $\tau _{prod}$ on the density
matrices as was previously done in \cite{E-E-S-Y1, E-Y1,
E-S-Y1,E-S-Y2,E-S-Y5,
E-S-Y3,Kirpatrick,TChenAndNP,ChenAnisotropic,Chen3DDerivation,C-H3Dto2D}.

Denote the spaces of compact operators and trace class operators on $%
L^{2}\left( \mathbb{R}^{3k}\right) $ as $\mathcal{K}_{k}$ and $\mathcal{L}
_{k}^{1}$, respectively. Then $\left( \mathcal{K}_{k}\right) ^{\prime }= 
\mathcal{L}_{k}^{1}$. By the fact that $\mathcal{K}_{k}$ is separable, we
select a dense countable subset $\{J_{i}^{(k)}\}_{i\geqslant 1}\subset 
\mathcal{K}_{k}$ in the unit ball of $\mathcal{K}_{k}$ (so $\Vert
J_{i}^{(k)}\Vert _{\func{op}}\leqslant 1$ where $\left\Vert \cdot
\right\Vert _{\func{op}}$ is the operator norm). For $\gamma ^{(k)},\tilde{
\gamma}^{(k)}\in \mathcal{L}_{k}^{1}$, we then define a metric $d_{k}$ on $%
\mathcal{L}_{k}^{1}$ by 
\begin{equation*}
d_{k}(\gamma ^{(k)},\tilde{\gamma}^{(k)})=\sum_{i=1}^{\infty
}2^{-i}\left\vert \limfunc{Tr}J_{i}^{(k)}\left( \gamma ^{(k)}-\tilde{\gamma}
^{(k)}\right) \right\vert .
\end{equation*}
A uniformly bounded sequence $\gamma _{N}^{(k)}\in \mathcal{L}_{k}^{1}$
converges to $\gamma ^{(k)}\in \mathcal{L}_{k}^{1}$ with respect to the
weak* topology if and only if 
\begin{equation*}
\lim_{N}d_{k}(\gamma _{N}^{(k)},\gamma ^{(k)})=0.
\end{equation*}
For fixed $T>0$, let $C\left( \left[ 0,T\right] ,\mathcal{L}_{k}^{1}\right) $
be the space of functions of $t\in \left[ 0,T\right] $ with values in $%
\mathcal{L}_{k}^{1}$ which are continuous with respect to the metric $d_{k}.$
On $C\left( \left[ 0,T\right] ,\mathcal{L}_{k}^{1}\right) ,$ we define the
metric 
\begin{equation*}
\hat{d}_{k}(\gamma ^{(k)}\left( \cdot \right) ,\tilde{\gamma}^{(k)}\left(
\cdot \right) )=\sup_{t\in \left[ 0,T\right] }d_{k}(\gamma ^{(k)}\left(
t\right) ,\tilde{\gamma}^{(k)}\left( t\right) ).
\end{equation*}
We can then define a topology $\tau _{prod}$ on the space $\oplus
_{k\geqslant 1}C\left( \left[ 0,T\right] ,\mathcal{L}_{k}^{1}\right) $ by
the product of topologies generated by the metrics $\hat{d}_{k}$ on $C\left( %
\left[ 0,T\right] ,\mathcal{L}_{k}^{1}\right) .$

\section{Proof of estimates \eqref{estimate:FreePartofBBGKY} and 
\eqref{estimate:InteractionPartofBBGKY}}

\label{appendix:estimating free and interaction}

\begin{proof}[Proof of Estimate \eqref{estimate:FreePartofBBGKY}]
Utilizing Lemma \ref{lemma:Klainerman-MachedonBoardGameForBBGKY} and
estimate \eqref{Relation:IteratngCollapsingEstimate} to the free part of $%
\gamma _{N}^{(2)}$, we obtain 
\begin{eqnarray*}
&&\left\Vert R^{(k-1)}B_{N,1,k}\text{\textsc{FP}}^{(k,l_{c})}\right\Vert
_{L_{T}^{1}L_{\mathbf{x,x}^{\prime }}^{2}} \\
&\leqslant &CT^{\frac{1}{2}}\left\Vert R^{(k)}\gamma _{N,0}^{(k)}\right\Vert
_{L_{\mathbf{x,x}^{\prime }}^{2}} \\
&&+\sum_{j=1}^{l_{c}}\sum_{m}\left\Vert
R^{(k-1)}B_{N,1,k}\int_{D}J_{N}^{(k,j)}(\underline{t}_{k+j},\mu
_{m})(U^{(k+j)}(t_{k+j})\gamma _{N,0}^{(k+j)})d\underline{t}%
_{k+j}\right\Vert _{L_{T}^{1}L_{\mathbf{x,x}^{\prime }}^{2}} \\
&\leqslant &CT^{\frac{1}{2}}\left\Vert R^{(k)}\gamma _{N,0}^{(k)}\right\Vert
_{L_{\mathbf{x,x}^{\prime }}^{2}} \\
&&+\sum_{j=1}^{l_{c}}\sum_{m}(CT^{\frac{1}{2}})^{j}\left\Vert R^{(k+j-1)}B_{N,\mu _{m}(k+j),k+j}U^{(k+j)}(t_{k+j})\gamma
_{N,0}^{(k+j)}\right\Vert _{L_{T}^{1}L_{\mathbf{x,x}^{\prime }}^{2}} \\
&\leqslant &CT^{\frac{1}{2}}\left\Vert R^{(k)}\gamma _{N,0}^{(k)}\right\Vert
_{L_{\mathbf{x,x}^{\prime }}^{2}}+\sum_{j=1}^{\infty }4^{j-1}(CT^{\frac{1}{2}%
})^{j+1}\left\Vert R^{(k+j)}\gamma _{N,0}^{(k+j)}\right\Vert _{L_{\mathbf{x,x%
}^{\prime }}^{2}}
\end{eqnarray*}%
Use condition (\ref{condition:energy condition}), it becomes%
\begin{eqnarray*}
&\leqslant &CT^{\frac{1}{2}}C_{0}^{k}+\sum_{j=1}^{\infty }4^{j-1}(CT^{\frac{1%
}{2}})^{j+1}C_{0}^{k+j} \\
&\leqslant &C_{0}^{k}\left( CT^{\frac{1}{2}}+\sum_{j=1}^{\infty }4^{j-1}(CT^{%
\frac{1}{2}})^{j+1}C_{0}^{j}\right) .
\end{eqnarray*}%
We can choose a $T$ independent of $k$, $l_{c}$ and $N$ such that the series
in the above estimate converges. We then have%
\begin{eqnarray*}
\left\Vert R^{(k-1)}B_{N,1,k}\text{\textsc{FP}}^{(k,l_{c})}\right\Vert
_{L_{T}^{1}L_{\mathbf{x,x}^{\prime }}^{2}} &\leqslant &C_{0}^{k}\left( CT^{%
\frac{1}{2}}+C\right) \\
&\leqslant &C^{k-1}
\end{eqnarray*}%
for some $C$ larger than $C_{0}$. Whence, we have shown estimate %
\eqref{estimate:FreePartofBBGKY}.
\end{proof}

\begin{proof}[Proof of Estimate \eqref{estimate:InteractionPartofBBGKY}]
We proceed like the proof of estimate (\ref{estimate:FreePartofBBGKY}) and
end up with 
\begin{eqnarray*}
&&\left\Vert R^{(k-1)}B_{N,1,k}\QTR{sc}{IP}^{(k,l_{c})}\right\Vert
_{L_{T}^{1}L_{\mathbf{x,x}^{\prime }}^{2}} \\
&\leqslant &\sum_{m}\left\Vert R^{(k-1)}B_{N,1,k}\int_{D}J_{N}^{(k,l_{c}+1)}(%
\underline{t}_{k+l_{c}+1},\mu _{m})(\gamma
_{N}^{(k+l_{c}+1)}(t_{k+l_{c}+1}))d\underline{t}_{k+l_{c}+1}\right\Vert
_{L_{T}^{1}L_{\mathbf{x,x}^{\prime }}^{2}} \\
&\leqslant &\sum_{m}(CT^{\frac{1}{2}})^{l_{c}}\left\Vert
R^{(k+l_{c})}B_{N,\mu _{m}(k+l_{c}+1),k+l_{c}+1}\gamma
_{N}^{(k+l_{c}+1)}(t_{k+l_{c}+1})\right\Vert _{L_{T}^{1}L_{\mathbf{\ x,x}%
^{\prime }}^{2}}.
\end{eqnarray*}%
We then investigate 
\begin{equation*}
\left\Vert R^{(k+l_{c})}B_{N,\mu _{m}(k+l_{c}+1),k+l_{c}+1}\gamma
_{N}^{(k+l_{c}+1)}(t_{k+l_{c}+1})\right\Vert _{L_{T}^{1}L_{\mathbf{x,x}%
^{\prime }}^{2}}.
\end{equation*}%
Set $\mu _{m}(k+l_{c}+1)=1$ for simplicity and look at $\tilde{B}%
_{N,1,k+l_{c}+1}$, we have%
\begin{eqnarray*}
&&R^{(k+l_{c})}\tilde{B}_{N,1,k+l_{c}+1}\gamma _{N}^{(k+l_{c}+1)}(t) \\
&=&R^{(k+l_{c})}\int V_{N}(x_{1}-x_{k+l_{c}+1})\gamma _{N}^{(k+l_{c}+1)}(t,%
\mathbf{x}_{k+l_{c}},x_{k+l_{c}+1};\mathbf{x}_{k+l_{c}}^{\prime
},x_{k+l_{c}+1})dx_{k+l_{c}+1} \\
&=&I+II
\end{eqnarray*}%
with $I$ and $II$ given by the product rule:%
\begin{equation*}
I=\int V_{N}^{\prime }(x_{1}-x_{k+l_{c}+1})\left( \frac{R^{(k+l_{c})}}{%
\left\vert \nabla _{x_{1}}\right\vert }\right) \gamma _{N}^{(k+l_{c}+1)}(t,%
\mathbf{x}_{k+l_{c}},x_{k+l_{c}+1};\mathbf{x}_{k+l_{c}}^{\prime
},x_{k+l_{c}+1})dx_{k+l_{c}+1},
\end{equation*}%
\begin{equation*}
II=\int V_{N}(x_{1}-x_{k+l_{c}+1})\left( R^{(k+l_{c})}\gamma
_{N}^{(k+l_{c}+1)}\right) (t,\mathbf{x}_{k+l_{c}},x_{k+l_{c}+1};\mathbf{x}%
_{k+l_{c}}^{\prime },x_{k+l_{c}+1})dx_{k+l_{c}+1},
\end{equation*}%
where we wrote%
\begin{equation*}
\frac{R^{(k+l_{c})}}{\left\vert \nabla _{x_{1}}\right\vert }=\left(
\dprod_{j=2}^{k+l_{c}}\left\vert \nabla _{x_{j}}\right\vert \right) \left(
\dprod_{j=1}^{k+l_{c}}\left\vert \nabla _{x_{j}^{\prime }}\right\vert
\right) .
\end{equation*}%
Then 
\begin{eqnarray*}
&&\int \left\vert R^{(k+l_{c})}\tilde{B}_{N,1,k+l_{c}+1}\gamma
_{N}^{(k+l_{c}+1)}(t)\right\vert ^{2}d\mathbf{x}_{k+l_{c}}d\mathbf{x}%
_{k+l_{c}}^{\prime } \\
&=&\int \left\vert I+II\right\vert ^{2}d\mathbf{x}_{k+l_{c}}d\mathbf{x}%
_{k+l_{c}}^{\prime } \\
&\leqslant &C\int \left\vert I\right\vert ^{2}d\mathbf{x}_{k+l_{c}}d\mathbf{x%
}_{k+l_{c}}^{\prime }+C\int \left\vert II\right\vert ^{2}d\mathbf{x}%
_{k+l_{c}}d\mathbf{x}_{k+l_{c}}^{\prime }.
\end{eqnarray*}%
To estimate the first term, we first Cauchy-Schwarz $dx_{k+l_{c}+1}$,%
\begin{eqnarray*}
&&\int \left\vert I\right\vert ^{2}d\mathbf{x}_{k+l_{c}}d\mathbf{x}%
_{k+l_{c}}^{\prime } \\
&\leqslant &\int d\mathbf{x}_{k+l_{c}}d\mathbf{x}_{k+l_{c}}^{\prime }\left(
\int \left\vert V_{N}^{\prime }(x_{1}-x_{k+l_{c}+1})\right\vert
^{2}dx_{k+l_{c}+1}\right) \\
&&\times \left( \int \left\vert \left( \frac{R^{(k+l_{c})}}{\left\vert
\nabla _{x_{1}}\right\vert }\right) \gamma _{N}^{(k+l_{c}+1)}(t,\mathbf{x}%
_{k+l_{c}},x_{k+l_{c}+1};\mathbf{x}_{k+l_{c}}^{\prime
},x_{k+l_{c}+1})\right\vert ^{2}dx_{k+l_{c}+1}\right) \\
&\leqslant &N^{5\beta }\left\Vert V^{\prime }\right\Vert _{L^{2}}^{2}\int d%
\mathbf{x}_{k+l_{c}}d\mathbf{x}_{k+l_{c}}^{\prime } \\
&&\times \left( \int \left\vert S^{(k+l_{c})}\gamma _{N}^{(k+l_{c}+1)}(t,%
\mathbf{x}_{k+l_{c}},x_{k+l_{c}+1};\mathbf{x}_{k+l_{c}}^{\prime
},x_{k+l_{c}+1})\right\vert ^{2}dx_{k+l_{c}+1}\right)
\end{eqnarray*}%
where $V\in W^{2,\frac{6}{5}+}$ implies that $V\in H^{1}$ by Sobolev. A
trace theorem then gives%
\begin{eqnarray*}
&\leqslant &CN^{5\beta }\left\Vert V^{\prime }\right\Vert _{L^{2}}^{2}\int d%
\mathbf{x}_{k+l_{c}}d\mathbf{x}_{k+l_{c}}^{\prime } \\
&&\times \left( \int \left\vert S^{(k+l_{c}+1)}\gamma _{N}^{(k+l_{c}+1)}(t,%
\mathbf{x}_{k+l_{c}},x_{k+l_{c}+1};\mathbf{x}_{k+l_{c}}^{\prime
},x_{k+l_{c}+1}^{\prime })\right\vert
^{2}dx_{k+l_{c}+1}dx_{k+l_{c}+1}^{\prime }\right) \\
&=&CN^{5\beta }\left\Vert V^{\prime }\right\Vert _{L^{2}}^{2}\left\Vert
S^{(k+l_{c}+1)}\gamma _{N}^{(k+l_{c}+1)}\right\Vert _{L_{T}^{\infty }L_{%
\mathbf{x},\mathbf{x}^{\prime }}^{2}}^{2}.
\end{eqnarray*}%
Estimate the second term in the same manner, we get 
\begin{eqnarray*}
&&\int \left\vert II\right\vert ^{2}d\mathbf{x}_{k+l_{c}}d\mathbf{x}%
_{k+l_{c}}^{\prime } \\
&=&\int d\mathbf{x}_{k+l_{c}}d\mathbf{x}_{k+l_{c}}^{\prime } \Big\vert \int V_{N}(x_{1}-x_{k+l_{c}+1})\\
&&\times \left( R^{(k+l_{c})}\gamma
_{N}^{(k+l_{c}+1)}\right) (t,\mathbf{x}_{k+l_{c}},x_{k+l_{c}+1};\mathbf{x}%
_{k+l_{c}}^{\prime },x_{k+l_{c}+1})dx_{k+l_{c}+1}\Big\vert ^{2} \\
&\leqslant &CN^{3\beta }\left\Vert V\right\Vert _{L^{2}}^{2}\left\Vert
S^{(k+l_{c}+1)}\gamma _{N}^{(k+l_{c}+1)}\right\Vert _{L_{T}^{\infty }L_{%
\mathbf{x},\mathbf{x}^{\prime }}^{2}}^{2},
\end{eqnarray*}%
Accordingly, 
\begin{equation*}
\begin{aligned}
\indentalign \int \left\vert R^{(k+l_{c})}\tilde{B}_{N,1,k+l_{c}+1}\gamma
_{N}^{(k+l_{c}+1)}(t)\right\vert ^{2}d\mathbf{x}_{k+l_{c}}d\mathbf{x}%
_{k+l_{c}}^{\prime } \\
&\leqslant CN^{5\beta }\left\Vert V\right\Vert
_{H^{1}}^{2}\left\Vert S^{(k+l_{c}+1)}\gamma _{N}^{(k+l_{c}+1)}\right\Vert
_{L_{T}^{\infty }L_{\mathbf{x},\mathbf{x}^{\prime }}^{2}}^{2}.
\end{aligned}
\end{equation*}%
Thence 
\begin{eqnarray*}
&&\left\Vert R^{(k-1)}B_{N,1,k}\QTR{sc}{IP}^{(k,l_{c})}\right\Vert
_{L_{T}^{1}L_{\mathbf{x,x}^{\prime }}^{2}} \\
&\leqslant &\sum_{m}(CT^{\frac{1}{2}})^{l_{c}}\left\Vert
R^{(k+l_{c})}B_{N,\mu _{m}(k+l_{c}+1),k+l_{c}+1}\gamma
_{N}^{(k+l_{c}+1)}(t_{k+l_{c}+1})\right\Vert _{L_{T}^{1}L_{\mathbf{\ x,x}%
^{\prime }}^{2}} \\
&\leqslant &C4^{l_{c}}(CT^{\frac{1}{2}})^{l_{c}}T\left( CN^{\frac{5\beta }{2}%
}\left\Vert V\right\Vert _{H^{1}}\left\Vert S^{(k+l_{c}+1)}\gamma
_{N}^{(k+l_{c}+1)}\right\Vert _{L_{T}^{\infty }L_{\mathbf{x},\mathbf{x}%
^{\prime }}^{2}}\right)
\end{eqnarray*}%
Put in Condition (\ref{condition:energy condition}), it becomes%
\begin{eqnarray*}
\left\Vert R^{(k-1)}B_{N,1,k}\QTR{sc}{IP}^{(k,l_{c})}\right\Vert
_{L_{T}^{1}L_{\mathbf{x,x}^{\prime }}^{2}} &\leqslant &CT(CT^{\frac{1}{2}%
})^{l_{c}}N^{\frac{5\beta }{2}}C_{0}^{k+l_{c}+1} \\
&=&C_{0}^{k}\left[ CT(CT^{\frac{1}{2}})^{l_{c}}N^{\frac{5\beta }{2}%
}C_{0}^{l_{c}+1}\right] .
\end{eqnarray*}%
Replace the constants $C$ and $C_{0}$ inside the bracket with some larger
constant and group the terms, we have%
\begin{equation*}
\left\Vert R^{(k-1)}B_{N,1,k}\QTR{sc}{IP}^{(k,l_{c})}\right\Vert
_{L_{T}^{1}L_{\mathbf{x,x}^{\prime }}^{2}}\leqslant C_{0}^{k}\left[ (T^{%
\frac{1}{2}})^{2+l_{c}}N^{\frac{5\beta }{2}}C^{l_{c}}\right] .
\end{equation*}%
As in \cite{TChenAndNPSpace-Time,Chen3DDerivation}, we take the coupling
level $l_{c}=\ln N$ to deal with what is inside the bracket 
\begin{equation*}
\left\Vert R^{(k-1)}B_{N,1,k}\QTR{sc}{IP}^{(k,l_{c})}\right\Vert
_{L_{T}^{1}L_{\mathbf{x,x}^{\prime }}^{2}}\leqslant CC_{0}^{k}\left[ (T^{%
\frac{1}{2}})^{2+\ln N}N^{\frac{5\beta }{2}}N^{c}\right] .
\end{equation*}%
Notice that there is no $k$ inside the bracket. Selecting $T$ such that 
\begin{equation*}
T\leqslant e^{-(5\beta +2C)},
\end{equation*}%
ensures that 
\begin{equation*}
(T^{\frac{1}{2}})^{\ln N}N^{\frac{5\beta }{2}}N^{c}\leqslant 1,
\end{equation*}%
and thence 
\begin{equation*}
\left\Vert R^{(k-1)}B_{N,1,k}\QTR{sc}{IP}^{(k,l_{c})}\right\Vert
_{L_{T}^{1}L_{\mathbf{x,x}^{\prime }}^{2}}\leqslant CC_{0}^{k}\leqslant
C^{k-1}
\end{equation*}%
with a $C$ larger than $C_{0}$ and independent of $k$ and $N.$ Whence, we
have finished the proof of estimate (\ref{estimate:InteractionPartofBBGKY}).
\end{proof}

\end{document}